\def\th@plain{\thm@notefont{} \itshape}
\def\th@definition{\thm@notefont{}\normalfont}
\DeclareFontFamily{U}{futm}{}
\DeclareFontShape{U}{futm}{m}{n}{
  <-> s * [1] fourier-bb
  }{}
\DeclareSymbolFont{Ufutm}{U}{futm}{m}{n}
\DeclareSymbolFontAlphabet{\mathbb}{Ufutm}
\mathchardef\mhyphen="2D 
\DeclareRobustCommand\widecheck[1]{{\mathpalette\@widecheck{#1}}}
\def\@widecheck#1#2{%
    \setbox\z@\hbox{\m@th$#1#2$}%
    \setbox\tw@\hbox{\m@th$#1%
       \widehat{%
          \vrule\@width\z@\@height\ht\z@
          \vrule\@height\z@\@width\wd\z@}$}%
    \dp\tw@-\ht\z@
    \@tempdima\ht\z@ \advance\@tempdima2\ht\tw@ \divide\@tempdima\thr@@
    \setbox\tw@\hbox{%
       \raise\@tempdima\hbox{\scalebox{1}[-1]{\lower\@tempdima\box
\tw@}}}%
    {\ooalign{\box\tw@ \cr \box\z@}}}
\renewcommand{\thefootnote}{\fnsymbol{footnote}}
\newtheorem{defn0}{Definition}[section]
\newtheorem{prop0}[defn0]{Proposition}
\newtheorem{thm0}[defn0]{Theorem}
\newtheorem{lemma0}[defn0]{Lemma}
\newtheorem{corollary0}[defn0]{Corollary}
\newtheorem{example0}[defn0]{Example}
\newtheorem{remark0}[defn0]{Remark}
\newtheorem{conjecture0}[defn0]{Conjecture}
\newenvironment{proposition}{\smallskip\begin{prop0}}{\end{prop0}}
\newenvironment{theorem}{\smallskip\begin{thm0}}{\end{thm0}}
\newenvironment{lemma}{\smallskip\begin{lemma0}}{\end{lemma0}}
\newenvironment{remark}{\smallskip\begin{remark0}}{\end{remark0}}
\newcommand{\CC}{\operatorname{\mathfrak{C}}}
\newcommand{\EE}{\operatorname{\mathfrak{E}}}
\newcommand{\FF}{\operatorname{\mathfrak{F}}}
\newcommand{\GG}{\operatorname{\mathfrak{G}}}
\newcommand{\KK}{\operatorname{\mathfrak{K}}}
\newcommand{\LL}{\operatorname{\mathfrak{L}}}
\newcommand{\MM}{\operatorname{\mathfrak{M}}}
\newcommand{\NN}{\operatorname{\mathfrak{N}}}
\newcommand{\RR}{\operatorname{\mathfrak{R}}}
\newcommand{\ob}{\operatorname{Obj}}
\newcommand{\comp}{\operatorname{\mathpzc{Comp}}}
\newcommand{\cob}{\operatorname{\mathpzc{Cob}}}
\newcommand{\module}{\operatorname{\mathpzc{Mod}}}
\newcommand{\symm}{\operatorname{\mathpzc{Sym}}}
\newcommand{\vect}{\operatorname{\mathpzc{Vect}}}
\newcommand{\topo}{\operatorname{\mathpzc{Top}}}
\newcommand{\aaa}{\operatorname{\mathpzc{A}}}
\newcommand{\bbb}{\operatorname{\mathpzc{B}}}
\newcommand{\ccc}{\operatorname{\widetilde{\mathpzc{C}}}}
\newcommand{\ddd}{\operatorname{\widetilde{\mathpzc{D}}}}
\newcommand{\eee}{\operatorname{\mathpzc{E}}}
\newcommand{\mmm}{\operatorname{\mathpzc{M}}}
\newcommand{\oo}{\operatorname{\widetilde{\mathpzc{O}}}}
\newcommand{\kkk}{\operatorname{\mathpzc{K}}}
\newcommand{\www}{\operatorname{\mathpzc{W}}}
\newcommand{\oc}{\operatorname{\widetilde{\mathpzc{OC}}}}
\newcommand{\dnklein}{\operatorname{\mathpzc{dnKlein}}}
\newcommand{\dnsymriem}{\operatorname{\mathpzc{dnSymRiemann}}}
\newcommand{\homo}{\operatorname{Hom}}
\newcommand{\id}{\operatorname{Id}}
\newcommand{\s}{\operatorname{\mathbb{S}}}
\newcommand{\K}{\operatorname{\mathbb{K}}}
\newcommand{\Z}{\operatorname{\mathbb{Z}}}
\newcommand{\C}{\operatorname{\mathbb{C}}}
\newcommand{\gcal}{\operatorname{\widetilde{\mathcal{G}}}}
\newcommand{\tr}{\operatorname{Tr}}
\newcommand{\h}{\operatorname{H}}
\newcommand{\op}{\operatorname{op}}
\newcommand{\ba}{\operatorname{Bar}}
\newcommand{\lan}{\operatorname{Lan}}
\renewcommand{\thefootnote}{\fnsymbol{footnote}}
\begin{document}

\title{On the Structure of Unoriented Topological Conformal Field Theories}
\author{Rams\`es Fern\`andez-Val\`encia}
\date{}

\maketitle


\begin{abstract}
\noindent
We give a classification of open Klein topological conformal field theories in terms of Calabi-Yau $A_\infty$-categories endowed with an involution. Given an open Klein topological conformal field theory, there is a universal open-closed extension whose closed part is the involutive variant of the Hochschild chains of the open part.
\end{abstract}
\vfill
\let\thefootnote\relax\footnotetext{\date{\today}}
\let\thefootnote\relax\footnotetext{Contact: ramses.fernandez.valencia@gmail.com}

\tableofcontents



\section{Introduction}

\subsection{Oriented and Klein TQFTs} 

The study of topological conformal field theories began with the works of Segal on conformal field theory \cite{Segal87}. Inspired by Segal's work, Atiyah gave a list of axioms for what he defines as a topological quantum field theory \cite{Atiyah88}, what represents a simpler version of a topological conformal field theory. It was Moore and Segal \cite{MoSe06} who first gave a precise definition of a topological conformal field theory and suggested the importance of its study.
\\\\
For a finite set, whose elements are called $D$-branes, $\Lambda$ let $\cob_\Lambda$ be the category whose class of objects are 1-manifolds (disjoint unions of circles and intervals) with boundary labelled by $D$-branes and with class of morphisms given by cobordism of these. Given a field $\K$ of characteristic zero, a 2-dimensional topological quantum field theory (henceforth a TQFT) is a symmetric monoidal functor $\FF:\cob_\Lambda\to\vect_{\K}$, where $\vect_{\K}$ is the category of $\K$-vector spaces. Let us consider a surface $\Sigma$ with boundary components labelled as open, closed or free. Open and free boundary components will correspond to intervals, whereas closed boundary components will correspond to circles. Depending on the boundary components, we can study open TQFTs if $\Sigma$ has only open and free boundary components; closed TQFTs if $\Sigma$ has only closed boundary components or open-closed TQFTs, where $\Sigma$ has open, closed and free boundary components.
\\\\
It is well known that:
\begin{enumerate}
\item The category of 2-dimensional open TQFTs is equivalent to the category of not-necessarily commutative Frobenius algebras (page 7 \cite{MoSe06}) and
\item the category of 2-dimensional closed TQFTs is equivalent to the category of commutative Frobenius algebras (Theorem 3.3.2 \cite{Kock03}).
\end{enumerate} 

If we change the morphisms in $\cob_\Lambda$ allowing not only oriented surfaces but unoriented surfaces, we obtain {\em Klein topological quantum field theories}. Closed Klein topological quantum field theories have been studied and classified in terms of Frobenius algebras endowed with extra structure coming from the extra generator one has to consider: the real projective plane $\mathbb{RP}^2$ with two holes \cite{TuTu06, AlNa06}. Open Klein topological quantum field theories are equivalent to non-commutative Frobenius algebras endowed with an involution \cite{thbraun12}. Open-closed Klein topological quantum field theories are completely described algebraically in terms of structure algebras \cite{AlNa06}.

\subsection{Oriented and Klein TCFTs} 

If we endow the morphisms of $\cob_{\Lambda}$ with a complex structure we can define a category $\mathpzc{OC}_{\Lambda}$ with the same class of objects of $\cob_{\Lambda}$ and where the arrows are given by singular chains on moduli spaces of Riemann surfaces. In this new setting it makes sense to work at a chain level so we can consider symmetric monoidal functors of the form $\FF:\mathpzc{OC}_{\Lambda}\to\comp_{\K}$, where $\comp_{\K}$ is the category of chain complexes over a field $\K$ of characteristic zero. Such a functor $\FF$, satisfying certain conditions, is called a \index{TCFT} 2-dimensional \textit{topological conformal field theory} (a TCFT henceforth). As in the TQFT setting, we talk about open, closed and open-closed TCFTs depending on the boundary components of the Riemann surfaces we work with.
Open TCFTs were classified by Costello \cite{Costello07} in terms of $A_\infty$-categories satisfying a Calabi-Yau condition. The work done by Costello relies on a ribbon graph decomposition of the moduli space of Riemann surfaces with marked points. Costello also gives a universal extension from open TCFTs to open-closed TCFTs and proves that the homology associated to the closed part of an open-closed TCFT is described in terms of the Hochschild homology of the Calabi-Yau $A_\infty$-category associated to its open part. 
\\\\
Costello's work was partially generalized to the unoriented setting, that is replacing Riemann surfaces with Klein surfaces, by Braun \cite{thbraun12}. In his work, Braun gives a decomposition of the moduli space of Klein surfaces in terms of M\"obius graphs, allowing him to state the classification of open Klein TCFTs in terms of involutive $A_\infty$-algebras  using techniques from operads theory.

\subsection{A closer look at topological conformal field theories} 

By endowing the morphisms in $\mathpzc{Cob}_{\Lambda}$ with a complex structure we can define a category $\mmm_{\Lambda}$ with the class of objects of $\mathpzc{Cob}_{\Lambda}$ and with class of arrows given by moduli spaces of Riemann surfaces.
\\\\
Let $\CC: \topo\to \comp_{\K}$ be the singular chains functor from topological spaces to chain complexes. As Riemann surfaces form moduli spaces, applying $\CC$ to the space of arrows of $\mmm_\Lambda$ yields a differential graded symmetric monoidal category $\mathpzc{OC}_\Lambda$ with $\ob(\mathpzc{OC}_\Lambda)=\ob(\mmm_\Lambda)$ and with class of arrows:
$$\homo_{\mathpzc{OC}_\Lambda}(a,b):=\CC\left(\homo_{\mmm_\Lambda}(a,b)\right).$$

Given a set of D-branes $\Lambda$, a 2-dimensional open-closed TCFT with set of D-branes $\Lambda$ is a pair $(\Lambda,\FF)$, where $\FF$ is a h-split symmetric monoidal functor $\FF:\mathpzc{OC}_\Lambda\to\comp_{\K}.$ As in the TQFT, we can consider just open and free boundary components in order to work not with $\mathpzc{OC}_\Lambda$ but with a subcategory $\mathpzc{O}_\Lambda$; or we can consider just closed boundary components in order to work with a subcategory $\mathpzc{C}_\Lambda$. Therefore, considering h-split symmetric monoidal functors $\FF:\mathpzc{O}_\Lambda\to\comp_{\K}$ will yield open TCFTs whilst considering h-split symmetric monoidal functors $\FF:\mathpzc{C}_\Lambda\to\comp_{\K}$ will return closed TCFTs. 
\\\\
Costello classified open TCFTs in terms of Calabi-Yau $A_\infty$-categories. An $A_\infty$-category $\mathpzc{C}$ consists of:
\begin{enumerate}
  \item A class of objects $\ob(\mathpzc{C})$;
  \item for each $c_1,c_2\in\ob(\mathpzc{C})$, a $\mathbb{Z}$-graded abelian group of homomorphisms $\homo_{\mathpzc{C}}(c_1,c_2)$;
  \item for all $n\geq 1$, composition maps 
  $$b_n:\homo_{\mathpzc{C}}(c_1,c_2)\otimes\cdots\otimes \homo_{\mathpzc{C}}(c_n,c_{n+1})\to \homo_{\mathpzc{C}}(c_1,c_{n+1})$$ of degree $n-2$ satisfying homotopy associativity conditions \cite{Costello07}.
\end{enumerate}

If for each $c\in\ob(\mathpzc{C})$ there exists an element $1_c\in\homo_{\mathpzc{C}}(c,c)$ of degree zero such that
\begin{enumerate}
\item $b_2(f\otimes 1_c)=f$ and $b_2(1_c\otimes g)=g$ for $f\in \homo_{\mathpzc{C}}(c',c)$ and $g\in \homo_{\mathpzc{C}}(c,c')$; 
\item for $0\leq i\leq n$, if $f_i\in\homo_{\mathpzc{C}}(c_i,c_{i+1})$ and $j=j+1$, then
$$b_n(f_0\otimes f_1\otimes\cdots\otimes 1_{c_j}\otimes\cdots\otimes f_{n-1})=0$$
\end{enumerate}
we say that the $A_\infty$-category $\mathpzc{C}$ is unital.
\\\\
A (unital) Calabi-Yau $A_\infty$-category is an $A_\infty$-category $\mathpzc{E}$ with a non-degenerate pairing of chain complexes 
$\langle - , - \rangle_{e_1,e_2}: \homo_{\mathpzc{E}}(e_1,e_2)\otimes \homo_{\mathpzc{E}}(e_2,e_1)\to \K,$
satisfying certain conditions \cite{Costello07}.
\\\\
Costello proves in Lemma 7.3.4 \cite{Costello07} that the category of open TCFTs is quasi-equivalent to the category of unital Calabi-Yau $A_\infty$-categories. The way he proves this result is heavily based on a ribbon graph decomposition for the moduli space of Riemann surfaces \cite{Costello06}, what allows one to replace $\mathpzc{O}_\Lambda$ with another category which we can describe by a set of generators and relations.
\\\\
The results obtained by Costello are all twisted by a local system of coefficients on the moduli spaces which has been ignored here. This twisting is useful and necessary to Costello due to his motivations related to Gromov-Witten theory; we ignore it for the sake of simplicity in the notations: all the results contained in this manuscript hold if we keep track of this local system. 

\subsection{The results of this research} 

By extending Costello's techniques to the unoriented setting, the research developed here represents a completion of the picture started by Braun. The main result is:

\begin{theorem}\label{thetheorem}
\begin{enumerate}
\item There is a homotopy equivalence between open Klein TCFTs and Calabi-Yau $A_\infty$-categories endowed with an involution.
\item Given an open Klein TCFT, a universal open-closed extension to open-closed Klein TCFTs exists.
\item The homology of the closed part of the above open-closed TCFT is described in terms of the involutive Hochschild homology of its open part.
\end{enumerate}
\end{theorem}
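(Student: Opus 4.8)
The plan is to follow Costello's strategy from \cite{Costello07} step by step, carrying along the $\Z/2$-action at every stage. First I would make precise the category of open-closed Klein TCFTs by restricting the functor $\FF:\mathpzc{OC}_\Lambda\to\comp_\K$ to the Klein (unoriented) moduli, and identify the closed subcategory $\ccc_\Lambda$ in the unoriented world; the key new feature compared to the oriented case is the extra generator, the two-holed $\mathbb{RP}^2$, together with the orientation-reversing cylinder. For part (1) the work is essentially already available: Braun's M\"obius graph decomposition of the moduli space of Klein surfaces \cite{thbraun12} lets one replace $\mathpzc{O}_\Lambda$ by a category presented by generators and relations, exactly as Costello's ribbon graph decomposition does in the oriented case, and the involution on the $A_\infty$-category arises from the orientation-reversing self-homeomorphisms of the relevant surfaces. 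I would assemble these into a zig-zag of quasi-equivalences between open Klein TCFTs and Calabi-Yau $A_\infty$-categories with involution, checking that the Calabi-Yau pairing is compatible with the involution (i.e.\ the involution is an anti-isometry or isometry as forced by the gluing of the M\"obius generator).

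For part (2), the universal open-closed extension, I would mimic Costello's construction of the open-closed category as a pushout/relative construction: one freely adjoins the closed generators to the open category and then imposes precisely the relations coming from the unoriented open-closed moduli (the Cardy condition and its Klein analogues, plus the relations forcing the new generators to act as they must). The universality is then a formal consequence of the generators-and-relations presentation: any open-closed Klein TCFT extending a given open one factors uniquely (up to homotopy) through this construction. The subtle point here is to make sure that the h-split condition is preserved and that the construction is homotopy-invariant, which is handled as in Costello by working with cofibrant models throughout.

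For part (3), I would compute the closed part of the universal extension. In the oriented case Costello identifies it with the Hochschild chain complex $\hh_\bullet(\eee)$ of the Calabi-Yau $A_\infty$-category; the computation is a homotopy colimit over the open-closed moduli that collapses, via the ribbon graph model, to the cyclic bar construction. In the Klein case the same homotopy colimit now runs over M\"obius graphs, and the orientation-reversing identification on the boundary circle of the closed output glues the two ends of the Hochschild complex by the involution rather than the identity; this is exactly the definition of the involutive Hochschild complex $\hh_\bullet^{\mathrm{inv}}(\eee)$ (the homotopy fixed points, or the $\Z/2$-twisted cyclic bar construction). So the statement reduces to identifying this homotopy colimit with the involutive Hochschild chains, after which passing to homology gives the involutive Hochschild homology.

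The main obstacle I expect is part (3), specifically showing that the homotopy colimit over the unoriented open-closed moduli really computes the involutive Hochschild complex and not merely its naive fixed points or coinvariants: one must track the $\Z/2$-action through the M\"obius graph model carefully, control the homotopy coherence of the involution (it is an $A_\infty$-functor, not a strict one, so its square is only homotopic to the identity), and verify that no spectral sequence degeneration issues or char-$2$ subtleties arise — here the hypothesis $\mathrm{char}\,\K=0$ is what makes homotopy fixed points and fixed points agree and keeps the argument clean. A secondary difficulty is bookkeeping: ensuring that the Calabi-Yau structure, the involution, and the unitality all interact correctly through the chain of quasi-equivalences in part (1), since the orientation-reversing generators impose sign and degree constraints that must be matched precisely against the algebraic definition of an involutive Calabi-Yau $A_\infty$-category.
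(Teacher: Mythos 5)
Your overall strategy --- carry Costello's program over to the unoriented setting using Braun's moduli-space results --- is indeed the paper's strategy, but the concrete route differs at each step and in one place the difference is a genuine problem. For part (1) the paper does not run the argument through the M\"obius graph decomposition: it uses Braun's nodal-surface result (the inclusion $\ddd_{g,u,h,n}\hookrightarrow\overline{\kkk}_{g,u,h,n}$ of surfaces whose irreducible components are discs is a homotopy equivalence) to replace $\oo_\Lambda$ by a category $\ddd_{\Lambda,\text{open}}$ presented by explicit generators and relations: discs $D^+(\lambda_0,\dots,\lambda_{n-1})$, discs with two incoming/outgoing points, and crucially the \emph{twisted disc} $D^\tau(\lambda_i,\lambda_{i+1})$, which is what produces the involution and its compatibility with the products and the pairing. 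No two-holed $\mathbb{RP}^2$ generator enters the open classification (that is a closed Klein TQFT phenomenon; here the M\"obius strip only appears as an unstable exceptional surface). For part (2) the paper does not freely adjoin closed generators and impose Cardy-type relations: the universal extension is the derived tensor product $\oc_\Lambda(-,\beta)\otimes^{\mathbb{L}}_{\oo_\Lambda}\MM$, computed concretely as $\ddd_\Lambda\otimes_{\ddd_{\Lambda,\text{open}}}\MM$ using the flatness of the bimodule $\ddd_\Lambda$ (the analogue of Costello's Lemma 6.3.1), which is also exactly what guarantees h-splitness is preserved; universality is the formal Kan-extension property, with no relations to impose by hand.

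The genuine gap is in your part (3). You identify the target with ``homotopy fixed points, or the $\Z/2$-twisted cyclic bar construction,'' but the involutive Hochschild complex meant in the theorem (following \cite{FeGi15}) is a specific \emph{quotient} of the cyclic bar construction: Hochschild chains modulo the relation $f_0^\star\otimes g=f_0\otimes g^\star$, which is neither the naive fixed points nor, in general, the homotopy fixed points of the involution $(f_0\otimes\cdots\otimes f_{n-1})^\star=f_{n-1}^\star\otimes\cdots\otimes f_0^\star$; indeed the paper stresses that involutive and ordinary Hochschild homology differ except for commutative algebras with trivial involution, so an argument aimed at the homotopy-fixed-point object would be proving a different statement. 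The paper's proof of part (3) is also not a homotopy colimit over M\"obius graphs: it is a direct computation with the bimodule presentation --- $\ddd_\Lambda$ with one closed output is freely generated over $\ddd_{\Lambda,\text{open}}$ by the annuli $A(\lambda_0,\dots,\lambda_{n-1})$ modulo the unit relation, exactness (flatness) reduces to the associated DG category, and the gluing of the twisted disc to the annulus boundary is what produces the relation $f_0^\star\otimes g=f_0\otimes g^\star$, giving $\ddd_\Lambda(-,1)\otimes_{\ddd_{\Lambda,\text{open}}}\phi=\overline{C}^{\text{inv}}_\bullet(\phi)$ on the nose. If you want to keep your colimit-over-graphs formulation, you would first have to prove that it computes this quotient complex, which is precisely the point your sketch leaves open.
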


The description of involutive Hochschild homology has been studied in detail in \cite{FeGi15}. Involutive Hochschild homology and ``usual'' Hochschild homology do not coincide unless the algebras involved are commutative and endowed with the trivial involution. 


\section{Homological algebra and category theory}

\textit{Braun \cite{thbraun12} gives a classification of open Klein topological conformal field theories in terms of Calabi-Yau $A_\infty$-categories endowed with involution using algebras over modular operads. It will be necessary to begin with an introduction of the concepts and notations which will be used henceforth and that will be central in these notes.}

\subsection{Modules over categories} 


Henceforth, all the categories will be differential graded symmetric monoidal categories (DGSM for short), and all the functors will be assumed to be differential graded functors. For DGSM categories $(\aaa,\otimes,1_{\aaa})$ and $(\bbb,\otimes, 1_{\bbb})$ a \textit{monoidal functor} is given by a triple $(\FF,F_0,F_1)$ where:
\begin{enumerate}
\item $\FF$ is an ordinary functor $\FF:\aaa\to\bbb$;
\item for objects $a_1,a_2\in\ob(\aaa)$ we have morphisms $F_1(a_1,a_2):\FF(a_1)\otimes \FF(a_2)\to \FF(a_1\otimes a_2)$ in $\bbb$ which are naturalt in $a_1$ and $a_2$;
\item for the units $1_{\aaa}$ and $1_{\bbb}$, we have a morphism in $\bbb$ of the form $F_0:1_{\bbb}\to \FF(1_{\aaa})$.
\end{enumerate}
Furthermore, the following diagrams must be commutative, for objects $a_1, a_2, a_3\in\ob(\aaa)$:
\[
\xymatrix{
\FF(a_1)\otimes (\FF(a_2)\otimes \FF(a_3))\ar[r]^-\cong\ar[d]_{\id\otimes F_1} & (\FF(a_1)\otimes \FF(a_2))\otimes\FF(a_3)\ar[d]^-{F_1\otimes\id} \\
\FF(a_1)\otimes\FF(a_2\otimes a_3)\ar[d]_-{F_1} & \FF(a_1\otimes a_2)\otimes\FF(a_3)\ar[d]^-{F_1} \\
\FF(a_1\otimes(a_2\otimes a_3))\ar[r]_-{\cong} & \FF((a_1\otimes a_2)\otimes a_3)
}
\]

\[
  \begin{minipage}[b]{0.45\linewidth}
  \centering
  \xymatrix
  { 
   \FF(a_2)\otimes 1_{\bbb} \ar[r]^-{\cong} \ar[d]_-{\id\otimes F_0} & \FF(a_2) \\
   \FF(a_2)\otimes \FF(1_{\aaa}) \ar[r]_-{F_1} & \FF(a_2\otimes 1_{\aaa})\ar[u]_-{\cong}
  }
  \end{minipage}
  \hspace{0.5cm}
  \begin{minipage}[b]{0.45\linewidth}
  \centering
  \xymatrixcolsep{3pc}\xymatrix
  {
  1_{\bbb}\otimes \FF(a_2) \ar[r]^-{\cong} & \FF(a_2) \\
  \FF(1_{\aaa})\otimes \FF(a_2)\ar[u]^-{F_0\otimes \id} \ar[r]_-{F_1} & \FF(1_{\aaa}\otimes a_2) \ar[u]_-{\cong}
  }
  \end{minipage}
\]

A monoidal functor $(\FF,F_0,F_1):\aaa\to\bbb$ between DGSM categories $\aaa$ and $\bbb$ will be called \textit{symmetric} if the following diagram commutes:
\[
\xymatrix{
\FF(a_1)\otimes \FF(a_2) \ar[r]^-{\sigma_{\bbb}}\ar[d]_-{F_1(a_1,a_2)} & \FF(a_2)\otimes \FF(a_1) \ar[d]^--{F_1(a_2,a_1)} 
\\
\FF(a_1\otimes a_2) \ar[r]_-{\FF(\sigma_{\aaa})} & \FF(a_2\otimes a_1)
}
\]
where $\sigma_{\aaa}$ and $\sigma_{\bbb}$ are the symmetry isomorphisms.
\\\\
A monoidal functor $(\FF,F_0,F_1)$ will be called \textit{split} if $F_1(a_1,a_2)$ and $F_0$ are isomorphims. We call $(\FF,F_0,F_1)$ \textit{h-split} if the corresponding homology morphisms $\h(F_1(a_1,a_2))$ and $\h(F_0)$ are isomorphisms.
\\\\
For two DGSM categories $\aaa$ and $\bbb$ and split monoidal functors $\MM,\NN: \aaa\to \bbb,$ 
a \index{Monoidal nat. transformation} \textit{monoidal natural transformation} $\phi:\MM\to \NN$ consists of a collection of maps $\phi_a$, for objects $a\in\ob(\aaa)$, in $\homo_{\bbb}(\MM(a), \NN(a))$ making the following diagrams commute:
\[
  \begin{minipage}[b]{0.45\linewidth}
  \centering
  \xymatrix
  { 
   \MM(a_1) \ar[r]^-{\phi_{a_1}} \ar[d]_-{\MM(f)} & \NN(a_1) \ar[d]^-{\NN(f)} \\
   \MM(a_2) \ar[r]_{\phi_{a_2}} & \NN(a_2)
  }
  \end{minipage}
  \hspace{0.5cm}
  \begin{minipage}[b]{0.45\linewidth}
  \centering
  \xymatrixcolsep{3pc}\xymatrix
  {
  \MM(a_1)\otimes \MM(a_2) \ar[r]^-{\phi_{a_1}\otimes \phi_{a_2}}\ar[d]_-\cong & \NN(a_1)\otimes \NN(a_2) \ar[d]^-\cong \\
  \MM(a_1\otimes a_2) \ar[r]_-{\phi_{a_1\otimes a_2}} & \NN(a_1\otimes a_2)
  }
  \end{minipage}
\]
  for morphisms $f:a_1 \to a_2$ and objects $a_1,a_2\in \ob(\aaa)$.
\\\\
For a DGSM category $\aaa$ and a field $\K$, a \index{Left $\aaa$-module} \textit{left $\aaa$-module} is a split symmetric monoidal functor $\LL:\aaa\to \comp_{\K}$.
A \index{Right $\aaa$-module} \textit{right $\aaa$-module} is a split symmetric monoidal functor $\RR:\aaa^{\op}\to\comp_{\K}.$ We have two categories, one of left $\aaa$-modules, denoted by $\aaa\hspace{-1pt}\mhyphen\hspace{-1pt}\module$, and another one of right $\aaa$-modules, which will be denoted by $\module\hspace{-1pt}\mhyphen\hspace{-1pt}\aaa$. An $\aaa-\bbb$-bimodule split symmetric monoidal functor $\FF:\aaa\otimes\bbb^{\op}\to\comp_{\K}$.

\subsection{Derived tensor products and push-forwards} 

Let $\MM$ be a $\bbb\mhyphen\aaa$-bimodule and $\NN$ a left $\aaa$-module. We define the left $\bbb$-module $\MM\otimes_{\aaa} \NN$ by saying that $(\MM\otimes_{\aaa} \NN)(b)$ is the complex with maps 
$\MM(b,a)\otimes_{\K}\NN(a)\to (\MM\otimes_{\aaa}\NN)(b)$
such that make the diagram commute for each pair $a,a'\in\ob(\aaa)$:
\begin{displaymath}
\xymatrix{
\MM(b,a)\otimes_{\K}\homo_{\aaa}(a',a)\otimes_{\K}\NN(a') \ar[r]^-{(1)} \ar[d]_-{(2)} & \MM(b,a)\otimes_{\K}\NN(a) \ar[d]\\
\MM(b,a')\otimes_{\K}\NN(a') \ar[r] & (\MM\otimes_{\aaa} \NN)(b)
}
\end{displaymath}
Where maps (1) and (2) denote left and right compositions.
\\\\
If $\mathfrak{f}$ is a functor between involutive DGSM categories $\mathfrak{f}:\aaa\to \bbb$, then $\homo_{\bbb}$ is a $\bbb$-bimodule and becomes an $\aaa\mhyphen\bbb$-bimodule and a $\bbb\mhyphen\aaa$-bimodule via the functors
\begin{displaymath}
\left.\begin{array}{cccc}\homo_{\bbb}: & \bbb\otimes\bbb^{\op} & \to & \comp_{\K} \\ & b_1\otimes b_2 & \rightsquigarrow & \homo_{\bbb}(b_1,b_2)\end{array}\right.,
\end{displaymath}
\begin{displaymath}
\left.\begin{array}{cccc}\FF: & \aaa\otimes\bbb^{\op} & \to & \bbb\otimes \bbb^{\op} \\ & a\otimes b & \rightsquigarrow & \mathfrak{f}(a)\otimes b\end{array}\right.,
\end{displaymath}
\begin{displaymath}
\left.\begin{array}{cccc}\GG: & \bbb\otimes\aaa^{\op} & \to & \bbb\otimes \bbb^{\op} \\ & b\otimes a & \rightsquigarrow & b\otimes\mathfrak{f}(a)\end{array}\right. .
\end{displaymath}
\noindent
We define a functor $\mathfrak{f}_{\star}:\aaa\mhyphen\module\to \bbb\mhyphen\module$ by setting
\begin{displaymath}
\mathfrak{f}_{\star}(\MM):=\homo_{\bbb}\otimes_{\aaa}\MM=:B\otimes_{\aaa}\MM.
\end{displaymath}
We define a functor $\mathfrak{f}^{\star}:\bbb\mhyphen\module\to \aaa\mhyphen\module$ as the composition of $\NN:\bbb\to\comp_{\K}$ with $\mathfrak{f}:\aaa\to\bbb$:
\begin{displaymath}
\left.\begin{array}{cccccc}\mathfrak{f}^{\star}: & \aaa & \to & \bbb & \to & \comp_{\K} \\ & a & \rightsquigarrow & \mathfrak{f}(a) & \rightsquigarrow & \NN(\mathfrak{f}(a))\end{array}\right. .
\end{displaymath}

Let us denote by $S_n$ the symmetric group on $n$ letters. For $\aaa$ an involutive DGSM category let $\symm{\aaa}$ be the subcategory whose objects are those of $\aaa$ and whose morphisms are the identity maps and the symmetry isomorphisms:
\begin{displaymath}
a_1\otimes a_2\otimes \cdots \otimes a_n\cong a_{\sigma(1)}\otimes \cdots \otimes a_{\sigma(n)}, \mbox{ for } \sigma\in S_n. 
\end{displaymath}

We define the category $\symm_{\K}\aaa$ as the sub-linear category of $\aaa$ whose morphisms are spanned by the morphisms in $\symm\aaa$.
\\\\
Following \cite{Costello07}, we denote by $\comp_{\K}^\Delta$ the symmetric monoidal category of simplicial chain complexes. The realization of  $C\in\ob\left(\comp_{\K}^\Delta\right)$ is $|C|:=\bigoplus_{n\geq 0}\frac{C\{n\}}{C^{\text{deg}}\{n\}}[-n],$ where $C^{\text{deg}}\{n\}$ is the image of the degeneracy maps and $[-n]$ denotes a degree shifting.
\\\\
Given an $\aaa\mhyphen\bbb$-bimodule $\MM$ and a left $\bbb$-module $\NN$, we define the left $\aaa$-module:
\begin{displaymath}
\MM\otimes_{\bbb}^{\mathbb{L}}\NN:=\MM\otimes_{\bbb}\ba_{\bbb}(\NN), 
\end{displaymath}
where $\ba_{\bbb}(\NN):=\left|\ba_{\bbb}^{\Delta}(\NN)\right|$ and $\ba_{\bbb}^{\Delta}(\NN)$ is the following simplicial $\bbb$-module:
\begin{displaymath}
\left(\ba_{\bbb}^{\Delta}(\NN)\right)[n]:=\underbrace{\bbb\otimes_{\symm_{\K}\bbb}\bbb\otimes_{\symm_{\K}\bbb} \cdots \otimes_{\symm_{\K}\bbb}\bbb}_{n \text{ times}} \otimes_{\symm_{\K}\bbb} \NN.
\end{displaymath}

The face maps come from the product maps $\bbb\otimes_{\symm_{\K}\bbb} \bbb\to \bbb$ whilst the degeneracy maps come from the maps $\symm_{\K}\bbb \to \bbb$.
\\\\
An $\aaa$-module $\MM$ is \index{Flat module} \textit{flat} if the functor $(-)\otimes_{\aaa}\MM:\module\mhyphen\aaa\to \comp_{\K}$ is exact, that is: if it sends quasi-isomorphisms to quasi-isomorphisms. We denote by $\aaa\mhyphen\mathpzc{flat}$ the full subcategory of flat $\aaa$-modules and the inclusion by $\mathfrak{i}:\aaa\mhyphen\mathpzc{flat} \hookrightarrow \aaa\mhyphen\module$. 
\\\\
The definition for the derived tensor product makes sense due to the following Lemmata:

\begin{lemma}[Lemma 4.3.3 \cite{Costello07}]
The projection $\pi:\ba_{\bbb}(\NN)\to\NN$ is a quasi-isomorphism.
\end{lemma}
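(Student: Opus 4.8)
The plan is to follow the standard bar-resolution argument: exhibit an explicit contracting homotopy on the augmented simplicial object $\ba_{\bbb}^{\Delta}(\NN) \to \NN$, and then verify that this homotopy survives realization so as to produce a contraction of the augmentation $\pi:\ba_{\bbb}(\NN)\to\NN$ of chain complexes. First I would unwind the definition: at simplicial level $n$ one has $\bigl(\ba_{\bbb}^{\Delta}(\NN)\bigr)[n] = \bbb^{\otimes_{\symm_{\K}\bbb} n}\otimes_{\symm_{\K}\bbb}\NN$, the face maps compose adjacent tensor factors (the last one acting on $\NN$), and the augmentation to $\NN$ is the composite of all the products. I would then define the extra degeneracy $s_{-1}:\bigl(\ba_{\bbb}^{\Delta}(\NN)\bigr)[n]\to\bigl(\ba_{\bbb}^{\Delta}(\NN)\bigr)[n+1]$ by inserting a unit (the image of $1_{\bbb}$ under $\symm_{\K}\bbb\to\bbb$) in the front slot; since $\bbb$ is a split monoidal functor, units behave strictly enough for this to be well-defined on the coequalizers defining $\otimes_{\symm_{\K}\bbb}$.

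Next I would check the simplicial identities relating $s_{-1}$ to the faces $d_0,\dots,d_{n+1}$, which show that $s_{-1}$ is an extra degeneracy exhibiting $\ba_{\bbb}^{\Delta}(\NN)$ as a simplicially contractible augmented simplicial $\bbb$-module, with the contraction onto the constant simplicial object on $\NN$. This gives a simplicial homotopy between the identity and the composite $\ba_{\bbb}^{\Delta}(\NN)\to\NN\to\ba_{\bbb}^{\Delta}(\NN)$. The point of passing through the simplicial level is that an extra degeneracy always yields, after taking the associated (normalized) chain complex, a genuine chain homotopy; so upon applying the realization functor $|-|$, which by definition quotients by the degenerate part and shifts, one obtains that $\pi:\ba_{\bbb}(\NN)\to\NN$ admits a section which is a chain homotopy inverse. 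Hence $\pi$ is a quasi-isomorphism — in fact a deformation retract.

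The main obstacle I expect is bookkeeping rather than conceptual: one must be careful that the extra degeneracy $s_{-1}$ is compatible with the relations imposed by the $\otimes_{\symm_{\K}\bbb}$ (i.e.\ with the symmetry isomorphisms one is coequalizing over), and that the signs introduced by the degree shifts $[-n]$ in the definition of $|-|$ combine correctly so that the simplicial homotopy really descends to a chain homotopy on $|\ba_{\bbb}^{\Delta}(\NN)| = \ba_{\bbb}(\NN)$. Since the realization discards exactly the image of the ordinary degeneracies but keeps $s_{-1}$-type terms, one should phrase the homotopy on the normalized complex from the start to avoid worrying about degenerate summands. Modulo these checks — all of which are routine and are the reason Costello states this as a lemma — the statement follows; indeed this is precisely Lemma 4.3.3 of \cite{Costello07}, and the argument in the involutive setting is identical because the involution plays no role in the contraction.
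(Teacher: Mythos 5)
Your argument is correct: the extra degeneracy $s_{-1}$ inserting the unit (via $\symm_{\K}\bbb\to\bbb$) in the front slot gives a simplicial contraction of the augmented object, which descends through the realization to a chain homotopy equivalence, so $\pi$ is a quasi-isomorphism (as a map of complexes, not of $\bbb$-modules, which is all that is needed). The paper itself gives no proof and simply cites Costello's Lemma 4.3.3, and your contraction is exactly the standard argument behind that cited lemma.
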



\begin{lemma}[Lemma 4.3.4 \cite{Costello07}]
For any $\bbb$-module $\NN$, $\ba_{\bbb}(\NN)$ is a flat $\bbb$-module.
\end{lemma}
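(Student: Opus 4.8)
The plan is to unwind the definition of flatness and prove directly that the functor $R\mapsto R\otimes_{\bbb}\ba_{\bbb}(\NN)$, from $\module\mhyphen\bbb$ to $\comp_{\K}$, carries quasi-isomorphisms to quasi-isomorphisms. The strategy is a two-stage reduction: first pass from the realization $\ba_{\bbb}(\NN)=\big|\ba^{\Delta}_{\bbb}(\NN)\big|$ to its individual simplicial degrees, and then show that each $\ba^{\Delta}_{\bbb}(\NN)[n]$ is a flat left $\bbb$-module by stripping off the leftmost $\bbb$-factor via a co-Yoneda argument and invoking exactness of the tensor product over $\symm_{\K}\bbb$.

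First I would commute $(-)\otimes_{\bbb}(-)$ with the realization. Since $R\otimes_{\bbb}(-)$ is computed as a coend (equivalently, the coequaliser of the two module actions on $\bigoplus_{b}R(b)\otimes_{\K}(-)(b)$), it commutes with direct sums and, because over a field of characteristic zero the subcomplex of degeneracies is a split summand, with the quotient $C\{n\}/C^{\mathrm{deg}}\{n\}$ as well. Hence $R\otimes_{\bbb}\ba_{\bbb}(\NN)\cong\big|\,n\mapsto R\otimes_{\bbb}\ba^{\Delta}_{\bbb}(\NN)[n]\,\big|$ naturally in $R$. The realization carries the filtration by simplicial degree, which is exhaustive and satisfies $F_{-1}=0$, and the face maps lower degree; so the associated spectral sequence converges, and a map of simplicial complexes that is a quasi-isomorphism in each degree induces a quasi-isomorphism on realizations. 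Thus it suffices to prove each $\ba^{\Delta}_{\bbb}(\NN)[n]$ is flat.

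Next I would write $\ba^{\Delta}_{\bbb}(\NN)[n]=\homo_{\bbb}\otimes_{\symm_{\K}\bbb}M_n$, where $\homo_{\bbb}$ is viewed as a $\bbb$-$\symm_{\K}\bbb$-bimodule (free left $\bbb$-module structure in its second argument, restricted right $\symm_{\K}\bbb$-structure in its first) and $M_n:=\bbb\otimes_{\symm_{\K}\bbb}\cdots\otimes_{\symm_{\K}\bbb}\bbb\otimes_{\symm_{\K}\bbb}\NN$ ($n-1$ copies of $\bbb$) is a left $\symm_{\K}\bbb$-module. For a right $\bbb$-module $R$, interchanging the two coends and applying the density (co-Yoneda) formula $\int^{b\in\bbb}R(b)\otimes\homo_{\bbb}(c,b)\cong R(c)$ yields a natural isomorphism
$$ R\otimes_{\bbb}\ba^{\Delta}_{\bbb}(\NN)[n]\;\cong\;\big(R|_{\symm_{\K}\bbb}\big)\otimes_{\symm_{\K}\bbb}M_n . $$
Restriction along $\symm_{\K}\bbb\hookrightarrow\bbb$ is exact, so it remains to check that $(-)\otimes_{\symm_{\K}\bbb}M_n$ is exact. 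But the morphisms of $\symm_{\K}\bbb$ are spanned by symmetry isomorphisms, so a $\symm_{\K}\bbb$-module is just a complex attached to each object together with an action of the relevant symmetric groups, and $P\otimes_{\symm_{\K}\bbb}Q$ is a direct sum, over isomorphism classes of objects, of $\K$-tensor products of the values of $P$ and $Q$ taken in coinvariants for finite symmetric groups; since $\K$ has characteristic zero, taking such coinvariants is exact. This proves exactness of $(-)\otimes_{\symm_{\K}\bbb}M_n$, hence flatness of $\ba^{\Delta}_{\bbb}(\NN)[n]$, and the reduction above finishes the proof.

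The main obstacle I anticipate is not the algebra over $\symm_{\K}\bbb$ — which is formal once characteristic zero is used — but the first step: carefully justifying that $(-)\otimes_{\bbb}(-)$ commutes with the realization $|\cdot|$ as defined (the interplay of the infinite direct sum, the degree shifts, and the quotient by degeneracies) and that a degreewise quasi-isomorphism of simplicial complexes survives to the realization, which requires the convergence of the simplicial-degree spectral sequence for an unbounded complex. Getting the variances right in the co-Yoneda identification of the second step (keeping the left $\bbb$-, right $\symm_{\K}\bbb$-structure on $\homo_{\bbb}$ straight) is the other place where care is needed.
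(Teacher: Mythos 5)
Your proof is correct, and it is essentially the argument of Costello's Lemma 4.3.4, which this paper cites without reproducing: reduce along the simplicial filtration of $\ba_{\bbb}(\NN)$ to each simplicial degree, identify $R\otimes_{\bbb}\bigl(\bbb\otimes_{\symm_{\K}\bbb}M_n\bigr)$ with $R\otimes_{\symm_{\K}\bbb}M_n$ by co-Yoneda, and use that tensoring over $\symm_{\K}\bbb$ is exact in characteristic zero since it only involves coinvariants of finite (symmetric) group actions. So there is nothing to correct; the only caveat is the one you already flag, namely that the groups acting are stabilizer subgroups of symmetric groups, which changes nothing since Maschke applies to any finite group.
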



For $\mathfrak{f}:\aaa\to\bbb$ a functor between involutive DGSM categories and $\NN$ a left $\aaa$-module we define
\begin{equation}\label{derivedtensor}
\mathbb{L}\mathfrak{f}_\star \NN:=\bbb\otimes_{\aaa}^{\mathbb{L}}\NN.
\end{equation}

\begin{remark}
Let us recall that 
$$\mathbb{L}\mathfrak{f}_\star \NN:=\bbb\otimes_{\aaa}^{\mathbb{L}}\NN\simeq\bbb\otimes_{\aaa}\ba_{\aaa}\NN=\bbb\otimes_{\aaa}\underbrace{\left|\ba^\Delta_{\aaa}\NN\right|}_{\EE}$$ and it is well known that we can write the last tensor product as the coend $\int^{\aaa}\bbb\odot\EE$. On the other hand, for $\NN\in\ob(\aaa\mhyphen\module)$ and $\FF:\aaa\to\bbb$ a functor between involutive DGSM categories, we can write (Theorem 1, chapter X, section 4 \cite{MacLane98}), for each $c\in\ob(\bbb)$:
$$(\lan_{\FF}\NN)(c)=\int^{a\in\ob(\aaa)}\homo_{\bbb}(\FF(a),c)\odot\NN(a).$$
Then we can think of (\ref{derivedtensor}) as an example of a derived left Kan extension.
\end{remark}

\subsection{Quasi-isomorphisms in a category} 

A morphism $f:C_\bullet\to D_\bullet$ of complexes in an abelian category $\aaa$ is a \index{Quasi-isomorphism} \textit{quasi-isomorphism} if the corresponding homology morphism 
$$\h_n(f):\h_n(C_\bullet)\to\h_n(D_\bullet)$$ 
is an isomorphism for each $n\in \Z$.
\\\\
A category $\mathpzc{C}$, not necessarily abelian, has a notion of quasi-isomorphism when we are given a subset of $\homo_{\mathpzc{C}}$ which is closed under composition and contains all isomorphisms. Objects in $\mathpzc{C}$ are said to be \index{Quasi-isomorphic objects} \textit{quasi-isomorphic} if they can be connected by a chain of quasi-isomorphisms. We write $c_1\simeq c_2$ when two objects $c_1,\, c_2$ are quasi-isomorphic.
\\\\
We define a natural transformation $\phi$ between exact functors $\FF$ and $\GG$ as a quasi-isomorphism $\phi_c: \FF(c)\to \GG(c)$ is a quasi-isomorphism for every object $c\in \ob(\mathpzc{C})$.
\\\\
Given categories $\mathpzc{C}$ and $\mathpzc{D}$ with the notion of quasi-isomorphism, we define a \index{Quasi-equivalence} \textit{quasi-equivalence} as a pair of functors $\FF:\mathpzc{C}\to \mathpzc{D}$ and $\GG:\mathpzc{D} \to \mathpzc{C}$ such that the following quasi-isomorphisms of functors hold: $\FF\circ \GG\simeq \mathbb{1}_{\mathpzc{D}}$ and $\GG\circ \FF\simeq \mathbb{1}_{\mathpzc{C}}$. 

\begin{lemma}[cf. Lemma 4.4.1 \cite{Costello07}]
Given two involutive DGSM categories $\aaa$ and $\bbb$, let us assume that the homology functor $\h_\bullet(\FF):\h_{\bullet}(\aaa)\to \h_{\bullet}(\bbb)$ is fully faithful. Then the functor $\FF^{\star}\mathbb{L}\FF_{\star}$ is quasi-isomorphic to $\mathbb{1}_{\aaa\mhyphen\module}$.
\end{lemma}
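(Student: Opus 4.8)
The plan is to mimic Costello's proof of Lemma 4.4.1, adapted to the involutive setting, which mostly means checking that the extra involutive structure is carried along functorially and plays no obstructing role.

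First I would unwind the definitions. By $(\ref{derivedtensor})$ we have $\mathbb{L}\FF_\star\NN = \bbb\otimes_\aaa^{\mathbb{L}}\NN = \bbb\otimes_\aaa\ba_\aaa(\NN)$, where $\bbb = \homo_\bbb$ is regarded as a $\bbb\mhyphen\aaa$-bimodule via the functor $\GG$ built from $\mathfrak{f}$. Applying $\FF^\star$ then restricts the resulting left $\bbb$-module back along $\mathfrak{f}$, so $\FF^\star\mathbb{L}\FF_\star\NN$ is the left $\aaa$-module whose value on $a\in\ob(\aaa)$ is $\bigl(\homo_\bbb(\mathfrak{f}(-),\mathfrak{f}(a))\otimes_\aaa\ba_\aaa(\NN)\bigr)$. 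There is an evident comparison map to $\NN$: namely, on the bar resolution side $\ba_\aaa(\NN)\to\NN$ is the quasi-isomorphism $\pi$ of Lemma~4.3.3, and there is a unit-type natural map $\homo_\aaa(-,a)\to\homo_\bbb(\mathfrak{f}(-),\mathfrak{f}(a))$ induced by $\mathfrak{f}$ on hom-complexes; composing with the canonical isomorphism $\homo_\aaa\otimes_\aaa\NN\cong\NN$ gives the candidate quasi-isomorphism $\theta_a\colon\FF^\star\mathbb{L}\FF_\star\NN(a)\to\NN(a)$, natural in $a$ and compatible with the involutions. It remains to show each $\theta_a$ is a quasi-isomorphism.

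The key step is the spectral-sequence (or filtration) argument. The bar construction $\ba_\aaa(\NN) = |\ba^\Delta_\aaa(\NN)|$ comes with the simplicial filtration, and tensoring with the flat bimodule $\homo_\bbb(\mathfrak{f}(-),\mathfrak{f}(a))$ is exact by Lemma~4.3.4, so it suffices to compare the two sides on the level of the associated graded / on homology, where everything linearizes. Concretely, $\h_\bullet$ of $\homo_\bbb(\mathfrak{f}(b),\mathfrak{f}(a))\otimes_\aaa(-)$ and of $\homo_\aaa(b,a)\otimes_\aaa(-)$ agree because $\h_\bullet(\mathfrak{f})$ is \emph{fully faithful}: the induced map $\h_\bullet\homo_\aaa(b,a)\to\h_\bullet\homo_\bbb(\mathfrak{f}(b),\mathfrak{f}(a))$ is an isomorphism for all $b,a$. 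Hence the map of simplicial objects induced by $\mathfrak{f}$ is a levelwise quasi-isomorphism, so it is a quasi-isomorphism after realization (using that $|-|$ preserves levelwise quasi-isomorphisms of simplicial chain complexes, which is the content Costello uses to make the derived tensor product well defined). This identifies $\FF^\star\mathbb{L}\FF_\star\NN$ with $\homo_\aaa\otimes_\aaa^{\mathbb{L}}\NN = \homo_\aaa\otimes_\aaa\ba_\aaa(\NN)\xrightarrow{\ \pi\ }\NN$, and the latter is a quasi-isomorphism by Lemma~4.3.3. Chasing these equivalences through shows $\theta$ is the required natural quasi-isomorphism $\FF^\star\mathbb{L}\FF_\star\simeq\mathbb{1}_{\aaa\mhyphen\module}$.

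The main obstacle I anticipate is purely bookkeeping rather than conceptual: one must check that all the categories, modules, bar constructions and the comparison map $\theta$ are compatible with the involutions throughout, i.e. that $\FF^\star$, $\mathbb{L}\FF_\star$ and the filtration argument all take place in the involutive module categories and that $\theta$ is a morphism there. Since $\mathfrak{f}$ is assumed to be a functor of involutive DGSM categories and the bar construction is built from $\symm_\K\bbb$-relative tensor products that only involve symmetry isomorphisms (which commute with any strict involution), this compatibility should be automatic, but it is the point where the proof genuinely differs from Costello's and where care is needed. A secondary subtlety is making precise the claim that a levelwise quasi-isomorphism of simplicial chain complexes realizes to a quasi-isomorphism; this can be handled by the skeletal filtration of $|-|$ and the fact that each subquotient is (a shift of) the non-degenerate part, reducing to the levelwise statement.
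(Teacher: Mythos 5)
The paper gives no proof of this lemma beyond the citation to Costello's Lemma 4.4.1, and your reconstruction is exactly the argument that citation invokes: identify $\FF^{\star}\mathbb{L}\FF_{\star}\NN(a)$ with $\homo_{\bbb}(\mathfrak{f}(-),\mathfrak{f}(a))\otimes_{\aaa}\ba_{\aaa}(\NN)$, compare it with $\homo_{\aaa}(-,a)\otimes_{\aaa}\ba_{\aaa}(\NN)\cong\ba_{\aaa}(\NN)(a)\xrightarrow{\pi}\NN(a)$ via the map induced by $\mathfrak{f}$ on Hom-complexes, and use that full faithfulness of $\h_\bullet(\FF)$ makes this a levelwise quasi-isomorphism of the simplicial bar objects, hence a quasi-isomorphism after realization, with the involutions carried along harmlessly. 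Only two cosmetic slips: the natural comparison is the unit zig-zag $\NN\leftarrow\ba_{\aaa}(\NN)\to\FF^{\star}\mathbb{L}\FF_{\star}\NN$ rather than a single map $\theta$ out of $\FF^{\star}\mathbb{L}\FF_{\star}\NN$ (which suffices, since the paper's notion of quasi-isomorphic allows chains), and the flat object provided by Lemma 4.3.4 is $\ba_{\aaa}(\NN)$, not the bimodule $\homo_{\bbb}(\mathfrak{f}(-),\mathfrak{f}(a))$ --- neither affects the argument.
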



\begin{theorem}[cf. Lemma 4.4.3 \cite{Costello07}]
For involutive DGSM categories $\aaa$ and $\bbb$, if $\FF:\aaa\to\bbb$ is a quasi-isomorphism, then the functors $\mathbb{L}\FF^{\star}$ and $\FF^{\star}$ are inverse quasi-equivalences between $\aaa\mhyphen\module$ and $\module\mhyphen\bbb$.
\end{theorem}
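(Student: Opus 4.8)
The plan is to verify the two quasi-isomorphisms of functors required by the definition of a quasi-equivalence, namely $\FF^{\star}\mathbb{L}\FF_{\star}\simeq\mathbb{1}_{\aaa\mhyphen\module}$ and $\mathbb{L}\FF_{\star}\FF^{\star}\simeq\mathbb{1}_{\bbb\mhyphen\module}$. The first one is immediate: since $\FF$ is a quasi-isomorphism of categories, the induced functor $\h_{\bullet}(\FF)\colon\h_{\bullet}(\aaa)\to\h_{\bullet}(\bbb)$ is an equivalence, in particular fully faithful, so the preceding Lemma (the analogue of Lemma 4.4.1 of \cite{Costello07}) applies and yields $\FF^{\star}\mathbb{L}\FF_{\star}\simeq\mathbb{1}_{\aaa\mhyphen\module}$, the comparison being realised by the unit $u\colon\mathbb{1}_{\aaa\mhyphen\module}\to\FF^{\star}\mathbb{L}\FF_{\star}$ of the natural adjunction.

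For the second quasi-isomorphism I would exploit that adjunction up to quasi-isomorphism. The restriction functor $\FF^{\star}$ is exact: a quasi-isomorphism $\NN\to\NN'$ of $\bbb$-modules is sent to $\NN\circ\FF\to\NN'\circ\FF$, which is still a quasi-isomorphism at every object; hence $\FF^{\star}$ descends to homotopy categories without being derived, whereas $\FF_{\star}$ is replaced by $\mathbb{L}\FF_{\star}$, and the underived adjunction $\FF_{\star}\dashv\FF^{\star}$ localises to an adjunction $\mathbb{L}\FF_{\star}\dashv\FF^{\star}$ (that $\mathbb{L}\FF_{\star}$ is well defined, and that its counit admits the explicit description below, rests on the two Lemmata recalled just before the definition of the derived push-forward: the projection $\ba_{\aaa}(-)\to(-)$ is a quasi-isomorphism and $\ba_{\aaa}(-)$ is flat). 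Concretely, the counit is the natural transformation $\varepsilon_{\NN}\colon\mathbb{L}\FF_{\star}\FF^{\star}\NN=\bbb\otimes_{\aaa}\ba_{\aaa}(\FF^{\star}\NN)\to\bbb\otimes_{\aaa}\FF^{\star}\NN\to\NN$, where the first arrow is induced by $\ba_{\aaa}(\FF^{\star}\NN)\to\FF^{\star}\NN$ and the second sends $b\otimes x$, with $b\in\homo_{\bbb}(\FF a,b')$ and $x\in\NN(\FF a)$, to $\NN(b)(x)$; it then suffices to prove that $\varepsilon_{\NN}$ is a quasi-isomorphism for every $\bbb$-module $\NN$.

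For this, apply $\FF^{\star}$ to $\varepsilon_{\NN}$ and use the triangle identity $(\FF^{\star}\varepsilon_{\NN})\circ u_{\FF^{\star}\NN}=\mathrm{id}_{\FF^{\star}\NN}$. By the first part, applied to the $\aaa$-module $\FF^{\star}\NN$, the unit $u_{\FF^{\star}\NN}\colon\FF^{\star}\NN\to\FF^{\star}\mathbb{L}\FF_{\star}\FF^{\star}\NN$ is a quasi-isomorphism; since the composite is the identity, the two-out-of-three property forces $\FF^{\star}\varepsilon_{\NN}$ to be a quasi-isomorphism as well. Finally, a quasi-isomorphism of categories is by definition a bijection on objects, so the formula $(\FF^{\star}\phi)_{a}=\phi_{\FF a}$ shows that $\FF^{\star}$ reflects quasi-isomorphisms; hence $\varepsilon_{\NN}$ is itself a quasi-isomorphism. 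Putting the two halves together, $\mathbb{L}\FF_{\star}$ and $\FF^{\star}$ are inverse quasi-equivalences.

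I expect the main obstacle to be the homotopy-coherent bookkeeping compressed into the second paragraph: one must make sure the derived adjunction $\mathbb{L}\FF_{\star}\dashv\FF^{\star}$ genuinely exists with triangle identities valid up to quasi-isomorphism, and that the abstract counit is modeled by the bar-resolution map written above — this is exactly where exactness of $\FF^{\star}$ and the acyclicity and flatness of the bar construction are used. A more hands-on alternative, closer to Costello's original argument, is to resolve an arbitrary $\bbb$-module through $\ba_{\bbb}(\NN)$ built out of representables $\homo_{\bbb}(b,-)$, check the identity on a representable module — where $\mathbb{L}\FF_{\star}\FF^{\star}\homo_{\bbb}(b,-)\simeq\homo_{\bbb}(b,-)$ reduces to the quasi-isomorphisms $\homo_{\aaa}(a,a')\to\homo_{\bbb}(\FF a,\FF a')$ together with bijectivity on objects — and then propagate along the realisation; the same acyclicity facts carry that version through.
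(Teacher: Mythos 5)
Your argument is correct and is essentially the standard one: the paper itself gives no proof of this statement, importing it verbatim from Costello's Lemma 4.4.3, and your reconstruction (unit is a quasi-isomorphism by the preceding Lemma since $\h_\bullet(\FF)$ is fully faithful; apply $\FF^{\star}$ to the counit, use the triangle/bar-projection identity and two-out-of-three; conclude because $\FF^{\star}$ reflects quasi-isomorphisms as $\FF$ is bijective on objects) is exactly that argument. The only point needing care, which you already flag, is that the ``unit'' is really a zig-zag through the bar resolution $\ba_{\aaa}$, so the triangle identity holds with the bar projection in place of the literal identity — this does not affect the conclusion.
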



\begin{proposition}[Lemma 4.4.4 \cite{Costello07}]\label{costello444}
Let us consider $\FF^\star$ and $\mathbb{L}\FF_\star$ the induced quasi-equivalences between $\module\hspace{-2pt}\mhyphen \aaa\times \aaa\mhyphen\module\leftrightarrow \module\hspace{-2pt}\mhyphen \bbb\times \bbb\mhyphen\module$. Then the diagram below commutes up to quasi-isomorphim:
\[
\xymatrix{
\module\hspace{-2pt}\mhyphen \aaa\times \aaa\mhyphen\module \ar[r]^-{\otimes_{\aaa}^{\mathbb{L}}} \ar@<-3pt>[d]_-{\FF^\star} & \comp_{\K} \\
\module\hspace{-2pt}\mhyphen \bbb\times \bbb\mhyphen\module \ar[ur]_-{\otimes_{\bbb}^{\mathbb{L}}} \ar@<-3pt>[u]_-{\mathbb{L}\FF_\star} & 
}
\]
\end{proposition}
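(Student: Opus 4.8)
The plan is to produce, naturally in a right $\aaa$-module $M$ and a left $\aaa$-module $N$, a quasi-isomorphism
\[
M\otimes_{\aaa}^{\mathbb{L}}N\;\simeq\;(\mathbb{L}\FF_\star M)\otimes_{\bbb}^{\mathbb{L}}(\mathbb{L}\FF_\star N),
\]
which is the content of the asserted commutativity of the square. The dual formulation along $\FF^\star$, namely $(\FF^\star P)\otimes_{\aaa}^{\mathbb{L}}(\FF^\star Q)\simeq P\otimes_{\bbb}^{\mathbb{L}}Q$ for $\bbb$-modules $P,Q$, follows from it by substituting $P=\mathbb{L}\FF_\star M$, $Q=\mathbb{L}\FF_\star N$ and invoking that $(\mathbb{L}\FF_\star,\FF^\star)$ is an inverse quasi-equivalence (preceding theorem, cf.\ Lemma~4.4.3 of \cite{Costello07}), so it is enough to establish the displayed one. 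We are in the situation where $\FF$ is a quasi-isomorphism, hence $\h_\bullet(\FF)$ is an equivalence of categories and in particular fully faithful, so the earlier lemma (cf.\ Lemma~4.4.1 of \cite{Costello07}) supplies a natural quasi-isomorphism $\FF^\star\mathbb{L}\FF_\star\simeq\mathbb{1}_{\aaa\mhyphen\module}$. Throughout I use the defining formula $\mathbb{L}\FF_\star(-)=\bbb\otimes_{\aaa}^{\mathbb{L}}(-)$, with $\bbb=\homo_{\bbb}$ carrying its $\bbb\mhyphen\bbb$-bimodule structure and its $\aaa\mhyphen\bbb$- and $\bbb\mhyphen\aaa$-structures restricted along $\FF$ and $\GG$, together with the well-definedness of these derived tensor products and the flatness of the relevant bar resolutions (Lemmata~4.3.3 and 4.3.4 of \cite{Costello07}).

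First I would expand the right-hand side as a triple derived tensor product,
\[
(\mathbb{L}\FF_\star M)\otimes_{\bbb}^{\mathbb{L}}(\mathbb{L}\FF_\star N)=(M\otimes_{\aaa}^{\mathbb{L}}\bbb)\otimes_{\bbb}^{\mathbb{L}}(\bbb\otimes_{\aaa}^{\mathbb{L}}N),
\]
and re-bracket, using associativity of the derived tensor product, as $M\otimes_{\aaa}^{\mathbb{L}}\bigl(\bbb\otimes_{\bbb}^{\mathbb{L}}(\bbb\otimes_{\aaa}^{\mathbb{L}}N)\bigr)$. For any left $\bbb$-module $X$ one has the derived co-Yoneda identity $\bbb\otimes_{\bbb}^{\mathbb{L}}X=\bbb\otimes_{\bbb}\ba_{\bbb}(X)\simeq\ba_{\bbb}(X)\simeq X$, the last step being the projection quasi-isomorphism; applied to $X=\bbb\otimes_{\aaa}^{\mathbb{L}}N=\mathbb{L}\FF_\star N$, the identification keeps the residual left $\aaa$-action carried by the outer $\bbb=\homo_{\bbb}$ seen as an $\aaa\mhyphen\bbb$-bimodule via $\FF$, so that $\bbb\otimes_{\bbb}^{\mathbb{L}}(\bbb\otimes_{\aaa}^{\mathbb{L}}N)\simeq\FF^\star\mathbb{L}\FF_\star N$ as left $\aaa$-modules. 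Hence
\[
(\mathbb{L}\FF_\star M)\otimes_{\bbb}^{\mathbb{L}}(\mathbb{L}\FF_\star N)\;\simeq\;M\otimes_{\aaa}^{\mathbb{L}}\bigl(\FF^\star\mathbb{L}\FF_\star N\bigr).
\]
To conclude I would feed in $\FF^\star\mathbb{L}\FF_\star N\simeq N$ from the earlier lemma (legitimate because $\h_\bullet(\FF)$ is fully faithful) and use that $M\otimes_{\aaa}^{\mathbb{L}}(-)=M\otimes_{\aaa}\ba_{\aaa}(-)$ is homotopy invariant on left $\aaa$-modules (the bar resolution being flat), to obtain $M\otimes_{\aaa}^{\mathbb{L}}\FF^\star\mathbb{L}\FF_\star N\simeq M\otimes_{\aaa}^{\mathbb{L}}N$. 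Concatenating the quasi-isomorphisms produced above gives $M\otimes_{\aaa}^{\mathbb{L}}N\simeq(\mathbb{L}\FF_\star M)\otimes_{\bbb}^{\mathbb{L}}(\mathbb{L}\FF_\star N)$; since every arrow in the chain is induced by a functorial morphism of bar/simplicial objects, the composite is natural in $M$ and in $N$, i.e.\ it is a natural transformation which is objectwise a quasi-isomorphism, which is precisely what "the diagram commutes up to quasi-isomorphism" means.

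The step I expect to be the main obstacle is the associativity of $\otimes^{\mathbb{L}}$ over an involutive DGSM category, together with the bookkeeping it imposes: one must build the two-sided (here three-sided) bar object in the relevant variables, compare its two iterated realizations through the functor $|-|$ with its degeneracies and the subcategories $\symm_{\K}\aaa$, $\symm_{\K}\bbb$, and check that flatness of the bar resolutions upgrades the comparison from a bare map to a quasi-isomorphism; this is notation-heavy but conceptually standard, essentially a bisimplicial/acyclic-models computation. The involutive structures on $\aaa$ and $\bbb$ play no active role: each step above is an instance of Costello's oriented argument and carries over verbatim because it uses only the differential graded symmetric monoidal structure and the fact that $\mathfrak{f}$ is a differential graded functor; one merely records that the bar resolutions and the restriction functor are compatible with the involutions.
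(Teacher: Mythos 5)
The paper offers no proof of this proposition at all: it is imported verbatim from Costello (Lemma 4.4.4 of \cite{Costello07}), so there is no internal argument to measure yours against, and the right benchmark is Costello's original proof, which your reconstruction essentially follows. Reducing to the $\mathbb{L}\FF_\star$ triangle is legitimate (for arbitrary $\bbb$-modules $P,Q$ one takes $M=\FF^\star P$, $N=\FF^\star Q$ and uses $\mathbb{L}\FF_\star\FF^\star\simeq\mathbb{1}$ together with quasi-isomorphism invariance of $\otimes^{\mathbb{L}}_{\bbb}$), and the chain you build --- expand $(\mathbb{L}\FF_\star M)\otimes^{\mathbb{L}}_{\bbb}(\mathbb{L}\FF_\star N)$, collapse the inner $\bbb\otimes^{\mathbb{L}}_{\bbb}(-)$ by co-Yoneda plus the bar projection quasi-isomorphism (Lemma 4.3.3), identify the result with $M\otimes^{\mathbb{L}}_{\aaa}\FF^\star\mathbb{L}\FF_\star N$, and finish with $\FF^\star\mathbb{L}\FF_\star\simeq\mathbb{1}_{\aaa\mhyphen\module}$, available because a quasi-isomorphism $\FF$ is in particular fully faithful on homology --- is correct and natural in $M$ and $N$. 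The two places where the real work hides are exactly the ones you name or half-name: (i) associativity of the derived tensor product, i.e.\ comparing the two bracketings through the two-sided bar object, which you flag as the main obstacle but do not carry out; and (ii) invariance of $M\otimes^{\mathbb{L}}_{\aaa}(-)$ under quasi-isomorphisms in the \emph{second} variable, which is not literally the flatness statement of Lemma 4.3.4 (that concerns the other variable) but follows because, over the field $\K$ of characteristic zero, $\ba_{\aaa}(-)$ preserves quasi-isomorphisms levelwise and one filters the realization. Both are part of the Section 4.3 machinery that the paper (and Costello) take as given, so neither is a gap in substance, but a complete write-up should cite or prove them explicitly. Your closing observation that the involutive structure plays no active role is correct and is precisely how the paper uses the result.
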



\section{Fundamentals from graph theory}

\textit{The role played by graphs is central in the theory of moduli spaces of Riemann or Klein surfaces as ribbon graphs provide orbi-cell decompositions of moduli spaces of Riemann surfaces \cite{Costello04, Costello06}. 
In order to deal with Klein surfaces, ribbon graphs are not enough and one has to introduce the concept of M\"obius graph. M\"obius graphs provide an orbi-cell decomposition of moduli spaces of Klein surfaces. For further details we refer to \cite{thbraun12}.}

\subsection{Ribbon graphs} 

\nocite{Giansiracusa11} 
A \index{Finite graph} \textit{finite graph} $\gamma$ consists of:
\begin{enumerate}
\item Finite sets of vertices $V(\gamma)$ and half-edges $H(\gamma)$;
\item an involution $\iota: H(\gamma)\to H(\gamma)$ and a map $\lambda: H(\gamma)\to V(\gamma)$.
\end{enumerate}

Given a finite graph $\gamma$, we say that two half-edges $a,b$ form an edge if $\iota(a)=b$; a half-edge $a$ is connected to a vertex $v$ if $\lambda(a)=v$. A leg $l$ in $\gamma$ is a univalent vertex; an external edge $e=(e_1,e_2)$ is an edge that meets a leg.
An internal edge is an edge for which neither end is univalent. A corolla is a graph consisting of a single vertex with several legs connected to it.
\\
\begin{figure}[h]
\centering
\scalebox{.8}{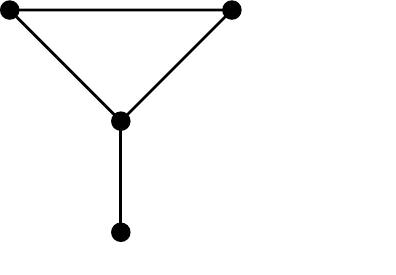}
\caption{A graph with four vertices, one leg, one external edge and three internal edges.}
\end{figure}

\begin{remark}
We can imagine and edge $e$ as a pair of half-edges $e_1,e_2$ by cutting $e$ in half. Observe that the involution $\iota$ swaps the half-edges. On the other hand $\lambda$, by sending a half-edge $e_i$ to a vertex $v_i$, is gluing $e_i$ to $v_i$. 
\end{remark}



Given two finite graphs $\gamma_1,\, \gamma_2$, a graph isomorphism $g:\gamma_1\to \gamma_2$ is given by a pair $(g_1,g_2)$ of bijections  $g_1: V(\gamma_1)\to V(\gamma_2)$ and $g_2: H(\gamma_1)\to H(\gamma_2)$ satisfying $\lambda\circ g_2= g_1\circ \lambda$ and $\iota\circ g_2=g_1\circ \iota$.
\\\\
A \index{Ribbon graph} \textit{ribbon graph} is a finite graph equipped with a cyclic ordering of the half-edges at each vertex and a labelling of the legs, that is:  the $n$ legs of the ribbon graph $\gamma$ are labelled by the elements of $\{1,\dots,n\}$. An isomorphism of ribbon graphs is an isomorphism of graphs that preserves the cyclic ordering at each vertex and the labelling of the legs.
\\\\
Given a ribbon graph $\gamma$ and an internal edge $e$ which is not a loop, we define the edge contraction $\gamma/e$ by endowing the graph $\gamma/e$, obtained after contracting the edge $e$, with the obvious cyclic ordering coming from the cyclic orderings at the vertices defining $e$.
\\\\
For a ribbon graph $\gamma$ and two internal edges $e_1,\,e_2$ that are not loops we have the following isomorphism: 
$(\gamma/e_1)/e_2\cong (\gamma/e_2)/e_1,$ assuming both sides are defined.
\\\\
A \index{Reduced ribbon graph} \textit{reduced ribbon graph} is a ribbon graph where each vertex is either univalent or has valence at least 3. Given a graph with at least one vertex having valence at least 3, we can associate to it reduced graphs by repeatedly contracting an edge attached to a vertex of valence 2 until the graph is reduced. 

\subsection{M\"obius graphs} 

A \index{M\"obius graph} \textit{M\"obius graph} is a ribbon graph $\gamma$ with a colouring of the half-edges by two colours, which means that we have a map $c:H(\gamma)\to \mathbb{Z}_2$. An isomorphism of M\"obius graphs is an isomorphism of graphs preserving the sum (modulo 2) of the labellings on each edge such that, at each vertex $v$, it can happen that either:
\begin{enumerate}
  \item The map preserves the cyclic ordering at $v$ and the colouring of the half-edges at $v$; or
  \item the map reverses the cyclic ordering at $v$ and reverses the colouring at the half-edges connected to $v$.
\end{enumerate} 

There is a very convenient way to visualize M\"obius graphs, and therefore ribbon graphs, which is based on thickening. If we thicken a graph in a way that the vertices become intervals and the edges become strips, it is not hard to see that we can get a ``surface'' from our graph. Now, the colouring in a M\"obius graph works as follow: let us consider a graph being a single edge where the two ends meet at a bivalent vertex; we can colour the half-edges compatibly or incompatibly; if do colour them in a compatible way, we get an annulus; if we colour the edges incompatibly, we get a M\"obius band.
\\\\
Given a M\"obius graph $\gamma$ and an internal edge $e$ (not being a loop) where both the half-edges of $e$ have the same colour, we define the graph contraction $\gamma/e$ as we did for ribbon graphs. This is well defined on isomorphism classes and can be extended to all internal edges but loops, regardless the colouring. For a M\"obius graph $\gamma$ and two internal edges $e_1,\, e_2$ (which are not loops) whose half-edges have the same colouring, the following isomorphism holds: $(\gamma/e_1)/e_2\cong (\gamma/e_2)/e_1,$ assuming both sides are defined.
\\\\
A \index{Reduced M\"obius graph} \textit{reduced M\"obius graph} is a M\"obius graph where each vertex is either univalent or has valence at least 3. Given a M\"obius graph with at least one vertex having valence at least 3, we can associate to it reduced M\"obius graphs by repeatedly contracting an edge attached to a vertex of valence 2 until the graph is reduced. 


\section{Fundamentals on Klein surfaces}

\textit{We revisit the concepts of Klein and nodal Klein surfaces and state equivalences of categories between Klein surfaces and Riemann surfaces with an involution following the results and techniques developed in \cite{thbraun12}. These equivalences will establish a duality that will make the forthcoming results almost a direct consequence of the results in \cite{Costello04, Costello06, Costello07}.} 

\subsection{Klein surfaces and symmetric Riemann surfaces} 

Let $D\subset \C$ be a non-empty open subset and $f:D\to\C$ a smooth map. We say that $f$ is \index{Dianalytic map} \textit{dianalytic} if its restriction to each component of $D$ is either analytic or anti-analytic. If $A$ and $B$ are non-empty subsets of the complex upper half-plane $\C^+$, a map $g:A\to B$ is called analytic (resp. dianalytic) on $A$ if it extends to an analytic (resp. dianalytic) map $g':U\to\C$ where $U$ is an open neighbourhood of $A$ in $\C$.

\begin{remark}
A surface, unless otherwise stated, is a connected and compact 2-dimensional manifold, possible with boundary. An atlas $\Xi$ on a surface $K$ is dianalytic if all the transition maps of $\Xi$ are dianalytic. A dianalytic structure on $K$ is a maximal dianalytic atlas. A \index{Klein surface} \textit{Klein surface} is a surface equipped with a dianalytic structure. A \index{Singular surface} \textit{singular} topological surface $(X,N)$ is a Hausdorff space $X$ with a discrete set $N\subset X$ of general singularities such that $X-N$ is a topological surface. Henceforth, we will consider these surfaces compact and possibly with boundary, where the boundary is defined to be the boundary of $X-N$.
\end{remark}





A \index{Symmetric Riemann surface} \textit{symmetric Riemann surface} $(X,\iota)$ is a Riemann surface $X$ with an anti-analytic involution $\iota:X\to X$. For symmetric Riemann surfaces $(X_1,\iota_1)$ and $(X_2,\iota_2)$, a morphism between them is a non-constant continuous morphism $X_1\stackrel{f}{\longrightarrow} X_2$ of Riemann surfaces such that $f\circ\iota_1=\iota_2\circ f$.
\\\\
Given a symmetric Riemann surface $(X,\iota)$, the quotient surface $K=X/\iota$ has a dianalytic structure making the quotient map $\pi:X\to X/\iota$ a morphism of Klein surfaces.
We have $\pi^{-1}(\partial K)=\partial X$ if, and only if, $\pi$ is dianalytic. We call $(X,\iota)$ a \index{Dianalytic symmetric Riemann surfaces} \textit{dianalytic symmetric Riemann surface}.
\\\\
Given a Klein or a symmetric Riemann surface $(X,\iota)$ whose underlying surface has $g$ handles, $0\leq u\leq 2$ crosscaps and $h$ boundary components, we define its topological type as the triple $(g,u,h)$.

\subsection{Nodal Klein and Riemann surfaces} 

Let $(X,N)$ be a singular surface. A \index{Boundary node} \textit{boundary node} is a singularity $z\in N$ with a neighbourhood homeomorphic to a neighbourhood $B\ni(0,0)$, where we define $B:= \{(x,y)\in (\C^+)^2\,|\,xy=0\}$, such that the homeomorphism sends $z$ to $(0,0)$. Similarly, an \index{Interior} \textit{interior node} is a singularity with a neighbourhood homeomorphic to $I\ni(0,0)$, where $I:=\{(x,y)\in \C^2|xy=0\}$. If $X$ has only nodal singularities, then an atlas on $X$ is given by charts on $X-N$ together with charts at the nodes. We call a singular surface with only nodal singularities a \index{Nodal surface} \textit{nodal surface}.
\\\\
A map $f:I\to \C$ is called (anti-)analytic if the compositions $f\circ g$, where $g:\C\to I$ can either send $z$ to $(z,0)$ or $(0,z)$ are (anti-)analytic. A map $f:\C\to I$ is called (anti-)analytic if the composition $\C\stackrel{f}{\longrightarrow}I\hookrightarrow \C^2$ has (anti-)analytic components.
\\

A \index{Nodal Riemann surface} \textit{nodal Riemann surface} is a nodal surface $(X,N)$ together with a maximal analytic atlas. A \index{Nodal Klein surface} \textit{nodal Klein surface} is a nodal surface $(X,N)$ together with a maximal dianalytic atlas. An irreducible component of a nodal surface is a connected component of the surface obtained by pulling apart all the nodes.
A \index{Nodal symmetric Riemann surface} \textit{nodal symmetric Riemann surface} $(X,\iota)$ is a nodal Riemann surface with an anti-analytic involution $\iota:X\to X$. If $\pi(n)$ is a boundary node 
for each node $n\in N$, the surface is called \textit{admissible}.
\\\\
A dianalytic nodal symmetric Riemann surface is an admissible symmetric Riemann surface such that $\pi$ is dianalytic. Observe that this imply that this kind of surface can only have boundary nodes.
\\\\
A Klein surface with $n$ marked points $(X,N)$ is a nodal Klein surface $(X,N)$ with an ordered $n$-tuple $P=(p_1,\dots, p_n)$ of distinct points on $X-N$. A morphism $f:(X_1,P)\to (X_2, P')$ of surfaces with $n$ marked points is a morphism between the underlying surfaces such that $f(p_i)=p_i'$ for each $p_i\in P$ and $p_i'\in P'$.
\\\\
A symmetric Riemann surface $(X,\iota)$ with $(m,n)$ marked points is given by $(X,\iota,P,P')$, where $(X,\iota)$ is a nodal symmetric Riemann surface with an ordered $2m$-tuple of distinct points on $X-N,\, P=(p_1,\dots, p_{2m})$, such that $\iota(p_i)=p_{m+i}$ for $i\in \{1,\dots, m\}$ and an ordered $n$-tuple $P'=(p'_1,\dots, p'_n)$ of distinct points on $X-N$ such that $\iota(p'_j)=p'_j$ for $j\in \{1,\dots, n\}$. A map of marked symmetric Riemann surfaces $f:(X_1,\iota_1, P, P')\to (X_2,\iota_2, Q, Q')$ is a morphism between the underlying symmetric Riemann surfaces such that $f(p_i)=q_i$ and $f(p'_j)=q'_j$. A marked symmetric Riemann surface is called admissible if the underlying symmetric Riemann surface is admissible and the points $\pi(p_i)$ and $\pi(p'_i)$ all lie in the boundary of $X/\iota$.
\newpage 
The category $\dnklein$ has objects Klein surfaces with only boundary nodes 
and marked points on the boundary equipped with a choice of orientation locally on each marked point; its space of arrows is made of dianalytic morphisms.
The category $\dnsymriem$ has objects dianalytic symmetric Riemann surfaces (possibly with boundary) with marked points. The arrows in $\dnsymriem$ are given by analytic maps.


\begin{proposition}[\cite{thbraun12}, Proposition 5.3.11] \label{equivalence}
There exists an equivalence of categories between $\dnklein$ and $\dnsymriem$. 
\end{proposition}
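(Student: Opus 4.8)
The plan is to construct the equivalence by exhibiting functors in both directions and showing they are mutually quasi-inverse. In one direction, I would send a dianalytic symmetric Riemann surface $(X,\iota,P,P')$ to its quotient $X/\iota$; by the discussion preceding the statement, the quotient of an admissible dianalytic symmetric Riemann surface carries a dianalytic structure, has only boundary nodes, and the marked points $\pi(p_i)$, $\pi(p'_j)$ land on the boundary of $X/\iota$. The extra datum of a local orientation at each marked point of the Klein surface is recovered from the complex structure upstairs: a marked point $\pi(p_i)$ comes from the pair $\{p_i, p_{m+i}\}=\{p_i,\iota(p_i)\}$, and choosing which of the two preimages is ``$p_i$'' is precisely a local choice of orientation near $\pi(p_i)$ (and for a real point $p'_j=\iota(p'_j)$ the complex orientation on a half-disc neighbourhood supplies the local orientation). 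One checks that an analytic map commuting with the involutions descends to a dianalytic map preserving marked points and their local orientations, so this assignment is functorial $\dnsymriem\to\dnklein$.

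In the other direction, I would send a Klein surface $K$ (with boundary nodes, boundary marked points, and local orientations) to its complex orientation double cover. Concretely, the dianalytic structure on $K$ determines an orientation double cover $\widetilde{K}\to K$, which inherits an analytic atlas making $\widetilde{K}$ a nodal Riemann surface, together with the deck transformation $\iota$, which is anti-analytic and free; admissibility — that each node of $K$ is a boundary node — guarantees that every node of $\widetilde K$ is a boundary node of the quotient, so $(\widetilde K,\iota)$ is a dianalytic nodal symmetric Riemann surface. A marked point $p$ of $K$ with its chosen local orientation determines a distinguished lift $\tilde p\in\widetilde K$ (the sheet on which the complex orientation agrees with the chosen one), and we set $P=(\tilde p_1,\iota(\tilde p_1),\dots)$ for the oriented marked points and $P'=(\tilde p'_1,\dots)$ for those lying on the real locus; this produces an object of $\dnsymriem$. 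Again one verifies that dianalytic morphisms lift to analytic morphisms commuting with the involutions and respecting the marked-point tuples, giving a functor $\dnklein\to\dnsymriem$.

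Finally I would check that the two composites are naturally isomorphic to the respective identities. For $\dnsymriem\to\dnklein\to\dnsymriem$: given $(X,\iota)$, the orientation double cover of $X/\iota$ is canonically identified with $X$ itself (this is the standard fact that for a free anti-analytic involution the quotient's orientation cover recovers the original surface), and under this identification the reconstructed marked points and involution match the originals, with naturality following from uniqueness of the identification. For $\dnklein\to\dnsymriem\to\dnklein$: the quotient of the orientation double cover of $K$ is $K$, the nodes and boundary structure are preserved by construction, and the local orientation recovered at each marked point is the one we started from, by the way the distinguished lift was defined. Since both composites land on honest isomorphisms (not merely quasi-isomorphisms), this is in fact an equivalence in the strict sense, which certainly gives the quasi-equivalence needed downstream.

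The main obstacle I anticipate is the careful bookkeeping at the nodes: one must confirm that the local models $B=\{(x,y)\in(\C^+)^2\mid xy=0\}$ for boundary nodes of Klein surfaces and $I=\{(x,y)\in\C^2\mid xy=0\}$ for interior nodes of Riemann surfaces are interchanged correctly by passing to the double cover and quotient — i.e. that an interior node of $\widetilde K$ which is exchanged with nothing (fixed setwise by $\iota$) maps to a boundary node of $K$, while admissibility rules out the problematic case of a node whose branches are swapped by $\iota$ in a way that would produce a non-dianalytic quotient. Handling the interaction of the node charts with the anti-analytic involution, and checking that the deck transformation is genuinely anti-analytic in these charts, is the technical heart of the argument; everything else is a routine but lengthy verification of functoriality and naturality, which is why I would cite \cite{thbraun12} for the details rather than reproduce them.
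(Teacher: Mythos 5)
The paper gives no proof of its own here (the result is quoted verbatim from Braun), and your outline is essentially the argument used in that source: quotienting by the involution in one direction, the orienting double cover in the other, with the local orientations at the boundary marked points of the Klein surface encoding the choice of lift of each marked point, and a check that both composites are naturally isomorphic to the identities. Your closing worry about interior nodes upstairs is the only off-note, and it is moot: as the paper itself observes, a dianalytic nodal symmetric Riemann surface can only have boundary nodes, so no interchange of interior and boundary node models needs to be tracked.
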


A Klein or Riemann surface with $n$ marked points, possibly oriented, is \index{Stable surface} \textit{stable} if it has only finitely many automorphisms.
\\\\
Let $\overline{\kkk}_{g,u,h,n}$ be the moduli space of stable Klein surfaces in $\dnklein$ with topological type $(g,u,h)$ and $n$ marked points on the boundary. Let us consider the subspace $\kkk_{g,u,h,n}\subset\overline{\kkk}_{g,u,h,n}$ of non-singular Klein surfaces. These moduli spaces are not empty except for the cases:
$$(g,u,h,n)\in \{(0,0,1,0),(0,0,1,1),(0,0,1,2),(0,0,2,0),(0,1,1,0)\}.$$

If we denote by $\ddd_{g,u,h,n}\subset \overline{\kkk}_{g,u,h,n}$ the subspace consisting of those Klein surfaces whose irreducible components are all discs, we have: 

\begin{proposition}[\cite{thbraun12}, Proposition 5.5.9] \label{equiv}
The inclusion $\ddd_{g,u,h,n}\hookrightarrow\overline{\kkk}_{g,u,h,n}$ defines a homotopy equivalence.
\end{proposition}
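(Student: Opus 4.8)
The plan is to transport the statement, via the equivalence of \propref{equivalence}, into the setting of symmetric Riemann surfaces, and there to recognise it as a $\Z_2$-equivariant version of Costello's analogous result for moduli of bordered Riemann surfaces (\cite{Costello06}, resting on \cite{Costello04}); this is the sense in which the result is ``almost a direct consequence'' of the oriented theory. When $(g,u,h,n)$ is one of the five exceptional tuples listed above, both $\ddd_{g,u,h,n}$ and $\overline{\kkk}_{g,u,h,n}$ are empty and there is nothing to prove, so from now on we assume we are outside that list.

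First I would use \propref{equivalence} to identify the inclusion $\ddd_{g,u,h,n}\hookrightarrow\overline{\kkk}_{g,u,h,n}$ with an inclusion $\mathcal{D}^{\mathrm{sym}}\hookrightarrow\overline{\mathcal{SR}}_{g,u,h,n}$, where $\overline{\mathcal{SR}}_{g,u,h,n}$ is the moduli space of the dianalytic nodal symmetric Riemann surfaces corresponding under \propref{equivalence} to the surfaces parametrised by $\overline{\kkk}_{g,u,h,n}$, and $\mathcal{D}^{\mathrm{sym}}$ is the sublocus of those all of whose irreducible components are discs. The only point that needs checking here is that the all-disc condition is preserved: an irreducible disc component of a quotient $X/\iota$ is covered either by a single $\iota$-invariant disc of $X$, on which $\iota$ acts as a reflection, or by a pair of discs of $X$ interchanged by $\iota$, and conversely; hence $\mathcal{D}^{\mathrm{sym}}$ is exactly the locus of symmetric Riemann surfaces all of whose components are discs. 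One also records that \propref{equivalence}, being induced by the family-wise continuous constructions $X\mapsto X/\iota$ and orientation double cover, upgrades to a homeomorphism of moduli spaces, so that homotopy equivalences transport in either direction.

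Next I would invoke Costello's retraction. Costello constructs a deformation retraction of the moduli space of stable bordered Riemann surfaces with marked boundary points onto its all-disc locus: one equips each surface with its hyperbolic metric, with geodesic boundary and cusps or cone points at the marked points, and flows by pinching shortest geodesics, driving every surface to its maximal all-disc degeneration inside the Deligne--Mumford-type compactification. Because this flow is built only from the hyperbolic metric, and the hyperbolic metric of $X$ is preserved by any anti-analytic involution $\iota$, the flow commutes with $\iota$; hence it is $\Z_2$-equivariant for the action sending a symmetric Riemann surface to its involution, and therefore restricts to a deformation retraction of $\overline{\mathcal{SR}}_{g,u,h,n}$ onto $\mathcal{D}^{\mathrm{sym}}$. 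Transporting this retraction back through \propref{equivalence} yields the desired deformation retraction of $\overline{\kkk}_{g,u,h,n}$ onto $\ddd_{g,u,h,n}$, and in particular the claimed homotopy equivalence.

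The hard part will be the last step: verifying that Costello's oriented flow genuinely restricts along the fixed locus of $\iota$. Concretely, $\iota$-invariant short geodesics must be pinched $\iota$-equivariantly so that the resulting node becomes a boundary node of the quotient; components of $X$ on which $\iota$ acts as a reflection, namely discs and crosscapped pieces, must be driven to limits that still lie in $\dnsymriem$; and one must confirm that the combinatorics of the maximal degenerations so produced match the M\"obius-graph orbi-cell stratification recalled above, with its colouring data, rather than merely the ribbon-graph one. Once Costello's retraction is written in a manifestly natural form, equivariance is automatic and the remaining checks are routine bookkeeping.
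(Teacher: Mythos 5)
You are reviewing a statement the paper itself does not prove: it is quoted from Braun (\cite{thbraun12}, Proposition 5.5.9), and Braun's proof proceeds by rerunning Costello's argument for the oriented case in the Klein/M\"obius-graph setting, using the symmetric-Riemann-surface picture to set it up. Your first step -- transporting the inclusion through \propref{equivalence} and checking that the all-disc locus corresponds to the all-disc locus -- is in that spirit and is essentially fine (in the dianalytic category the reflection-invariant disc case cannot actually occur, since a fixed arc in the interior would violate $\pi^{-1}(\partial K)=\partial X$; only swapped pairs of discs appear, which only helps you). The emptiness remark for the exceptional tuples is also harmless.

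The genuine gap is in the main step: you outsource the entire content to a ``Costello retraction'' that does not exist in \cite{Costello04,Costello06}. Costello proves that the all-disc locus includes into $\overline{\mathcal{N}}$ as a (weak) homotopy equivalence of orbispaces; he does not produce a canonical deformation retraction given by a metric-natural flow, and ``pinch the shortest geodesics'' does not define one: cutting a surface into discs requires collapsing a whole system of boundary arcs, the possible all-disc degenerations of a fixed surface are many (they index the orbi-cells, by ribbon, here M\"obius, graphs), and a shortest-systole-type rule is neither well defined nor continuous across strata, so no preferred target degeneration exists. This matters because your passage to the Klein side genuinely needs an \emph{equivariant} homotopy equivalence (e.g.\ an equivariant strong deformation retraction): a $\Z_2$-equivariant map that is merely a nonequivariant homotopy equivalence does not induce an equivalence on the symmetric locus. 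So the sentence ``once Costello's retraction is written in a manifestly natural form, equivariance is automatic and the remaining checks are routine bookkeeping'' assumes precisely the object whose construction is the content of the proposition. The correct route -- and the one Braun takes in the reference the paper cites -- is to adapt Costello's proof itself to Klein surfaces (with the M\"obius-graph combinatorics and the extra degenerations coming from crosscaps), not to formally equivariantize the oriented statement; as written, your proposal reduces the proposition to an unproved, and almost certainly false-as-stated, claim about the oriented theory.
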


\section{The definition of an open-closed Klein TCFT}


Let $\Lambda$ be a set whose objects will be called D-branes. We define a topological category $\www_\Lambda$ where:
\begin{enumerate}
\item The class of objects $\ob(\www_\Lambda)$ is given by quadruples $\alpha:= \left([O], [C], s, t\right)$, with $O,\,C\in \mathbb{N}$, where $[O]=\{0,\dots, O-1\}$ and $[C]=\{0,\dots, C-1\}$, and maps $s,t:[O]\to \Lambda$ ;
\item the space of morphisms $\www_\Lambda(\alpha,\beta)$ is given by the moduli spaces of Klein surfaces $\Sigma$ with $\alpha$ incoming boundary components and $\beta$ outgoing boundary components. The closed boundary components are parameterised circles, equipped with an orientation, labelled in $[C]$; the open boundary components are disjoint parameterised intervals, equipped with an orientation, embedded in the remaining boundary components and labelled in $[O]$. An open interval in $\partial \Sigma$ has associated an ordered pair $\{s(i), t(i)\}$ of D-branes indicating where the interval begins and where it ends, respectively. 
Surfaces in $\www_\Lambda(\alpha,\beta)$ have free boundary components, which can be either intervals or circles. 
Free boundary components are the remaining components of $\partial\Sigma$ after removing from it both open and closed components and must be labelled by D-branes in a way compatible with the labelling $\{s(i), t(i)\}$. 
\end{enumerate}
We denote by $\www_{\Lambda,\text{open}}\subset \www_\Lambda$ the full subcategory with objects of the form $\alpha=([O],\emptyset, s,t)$.
\\\\
Composition of morphisms is given by gluing Klein surfaces: we glue together incoming open (resp. closed) boundary components with outgoing open (resp. closed) boundary components. Open boundary components can only be glued together if their D-brane labelling and their orientations agree. Disjoint union makes $\www_\Lambda$ into a symmetric monoidal category.
\\\\
We require the positive boundary condition: Klein surfaces are required to have at least one incoming closed boundary component, or at least one free boundary component, on each connected component.

\begin{remark}
We allow the following exceptional surfaces: the disc, the annulus and the M\"obius strip with no open or closed boundary components and only free boundary components; these surfaces are unstable and so we define their associated moduli space to be a point.
\end{remark}

\begin{figure}[h]
\centering
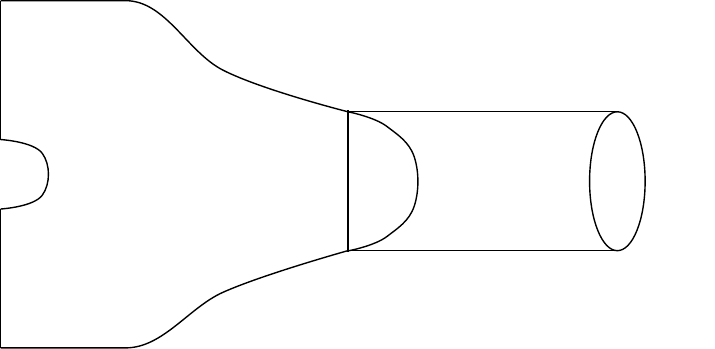
\caption{\textit{Components 1 and 2 are open; components 3 and 4 are free and component 5 is closed.}}
\end{figure}
Let us consider the functor $\CC:\topo\to\comp_{\K}$ of singular chains. The functor $\CC$ yields a 
DGSM category $\oc_\Lambda=\CC(\www_\Lambda)$ whose objects are finite sets $\alpha=([O], [C],s,t)$ and where the space of morphisms is $\homo_{\oc_\Lambda}(\alpha, \beta):=\CC(\www_\Lambda(\alpha, \beta)).$
Let $\oo_\Lambda$ be the full subcategory whose objects are of the form $([O],\emptyset,s,t)$. Similarly, let $\ccc_\Lambda$ be the full subcategory whose objects are of the form $(\emptyset, [C], s,t)$.
\\\\
An \index{Open-closed Klein TCFT} \textit{open-closed Klein topological conformal field theory} (henceforth an open-closed KTCFT) is a pair $(\Lambda,\FF)$ where $\Lambda$ is finite set of D-branes and $\FF$ is a h-split symmetric monoidal functor $\FF:\oc_\Lambda\to \comp_{\K}$; a morphism of open-closed KTCFTs $(\Lambda_1,\FF_1)\to (\Lambda_2, \FF_2)$ is given by a map $\Lambda_1\to\Lambda_2$ and a morphism $\FF\to \LL^\star\FF_2$, where $\LL:\oc_{\Lambda_1}\to \oc_{\Lambda_2}$ is the functor induced by the map $\Lambda_1\to\Lambda_2$; an \index{Open Klein TCFT} \textit{open KTCFT} is a h-split symmetric monoidal functor 
$$\FF:\oo_\Lambda\to \comp_{\K};$$ 
a \index{Closed Klein TCFT} \textit{closed KTCFT} is defined as a h-split symmetric monoidal functor 
$$\FF:\ccc_\Lambda\to \comp_{\K}.$$ 
Morphisms between open (resp. closed) KTCFTs are defined the same way we defined a morphism between open-closed KTCFTs.



\section{Categories via generators and relations}

\subsection{Moduli spaces and categories} 

We define the moduli space $\overline{\kkk}_\Lambda(\alpha,\beta)$ of Klein surfaces in $\dnklein$ (so we allow nodes) as follows: its elements are stable Klein surfaces with $\alpha$ incoming boundary components labelled by $[O_\alpha]$: we assume there are no closed incoming boundary components. Surfaces have $\beta$ outgoing boundary components labelled in a similar way: $O_\beta$ open boundary components and $C_\beta$ closed boundary components labelled by $[O_\beta]$ and $[C_\beta]$ respectively. Closed boundary components have exactly one marked point on them, whilst open marked points are distributed all along the boundary components of the surfaces. Klein surfaces in $\overline{\kkk}_\Lambda(\alpha,\beta)$ have free boundary components, which are the intervals between open marked points and those components with no marked points on them; free boundary components must be labelled by D-branes in $\Lambda$ in a way compatible with the maps $s,t:[O]\to \Lambda$. Let us remark that, although surfaces in $\overline{\kkk}_\Lambda(\alpha,\beta)$ are asked to be stable, we allow the following exceptional surfaces: the disc with zero, one or two open marked points, the annulus with no open or closed points and the M\"obius strip with no open or closed points. Let $\kkk_\Lambda(\alpha,\beta)\subset \overline{\kkk}_\Lambda(\alpha,\beta)$ be the subspace of non-singular Klein surfaces.

\begin{figure}[h]
\centering
\scalebox{3.5}{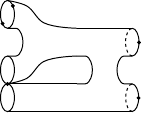}
\caption{\textit{A surface in $\overline{\kkk}_\Lambda(\alpha,\beta)$.}}
\end{figure}

\begin{remark}
Observe that, as we have contracted the intervals, D-branes defining free boundary components are now the intevals between marked points. 
\end{remark}

Let us define $\gcal_\Lambda(\alpha,\beta)\subset \overline{\kkk}_\Lambda(\alpha,\beta)$ as the subspace consisting of Klein surfaces whose irreducible components are either a disc or an annulus of modulus one. Annuli are required to have one of their sides labelled as an outgoing boundary component. Observe that $\gcal_\Lambda(\alpha,\beta)$ contains the exceptional surfaces.


\begin{proposition}\label{inclusion}
The inclusion $\gcal_\Lambda(\alpha,\beta)\hookrightarrow \overline{\kkk}_\Lambda(\alpha,\beta)$ is a weak homotopy equivalence of orbi-spaces.
\end{proposition}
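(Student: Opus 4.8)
The plan is to reduce the statement to Proposition~\ref{equiv} together with Costello's analogous result for moduli of Riemann surfaces, by first stratifying $\overline{\kkk}_\Lambda(\alpha,\beta)$ by its discrete invariants, then peeling off the outgoing closed boundary components, and finally invoking the disc-decomposition on what remains. Concretely, $\overline{\kkk}_\Lambda(\alpha,\beta)$ breaks up as a disjoint union over data that is locally constant on it: the topological type $(g,u,h)$ of each connected component, the way the open and closed marked points are distributed among boundary components, and the $\Lambda$-labelling of the free boundary components. None of this discrete decoration changes homotopy type, so on each stratum the problem is about $\overline{\kkk}_{g,u,h,n}$, to which Proposition~\ref{equiv} applies after the closed boundaries have been removed.

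First I would treat the outgoing closed boundary components. Each such component carries a single marked point and a parameterisation, so cutting along a circle parallel to it exhibits the surface as a gluing of a ``core'' piece --- carrying only open marked points and free boundary, and no closed boundary --- together with, for each closed component, an annulus one of whose boundary circles is that outgoing closed component. Shrinking the modulus of every such collar annulus to $1$ is a canonical, hence automorphism-equivariant, deformation retraction of $\overline{\kkk}_\Lambda(\alpha,\beta)$ onto the subspace in which each closed boundary component lies on a modulus-one annulus. On the core piece, which now has only open marked points and free boundary, Proposition~\ref{equiv} --- transported through the equivalence of Proposition~\ref{equivalence}, so that it plays the role of the Klein analogue of Costello's ribbon-graph decomposition \cite{Costello06, Costello07} --- supplies a homotopy equivalence onto the subspace all of whose irreducible components are discs. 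Assembling the collar-shrinking retraction with the disc-retraction over every stratum yields a weak equivalence of $\overline{\kkk}_\Lambda(\alpha,\beta)$ onto the subspace whose irreducible components are discs or modulus-one annuli with an outgoing side, i.e. onto $\gcal_\Lambda(\alpha,\beta)$.

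The main obstacle I expect is twofold. First, one must make the stratum-wise retractions compatible so that they glue into a single globally defined map of orbi-spaces; this forces one to check matching along the nodal degeneration loci where strata abut, and to handle the exceptional unstable surfaces by hand --- the disc with at most two open marked points, the annulus and the M\"obius strip --- whose moduli have been declared to be points and must be identified on both sides explicitly. Second, since the assertion is about orbi-spaces rather than coarse spaces, one must verify that all the retractions are equivariant for the finite automorphism groups of the surfaces, so that the inclusion induces isomorphisms on homotopy groups of the underlying orbi-spaces; here the M\"obius-graph orbi-cell decomposition of the Klein moduli space from \cite{thbraun12} is the tool that makes the isotropy bookkeeping tractable, exactly as the ribbon-graph decomposition does in the oriented case.
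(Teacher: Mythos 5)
Your overall strategy---reduce to Proposition~\ref{equiv} stratum by stratum after disposing of the outgoing closed boundary components---is in the right spirit, and the paper does indeed deduce the statement from Proposition~\ref{equiv}. But the step you use to dispose of the closed boundaries is where the argument breaks. The claimed ``canonical'' collar-shrinking is not a well-defined operation on $\overline{\kkk}_\Lambda(\alpha,\beta)$: a closed boundary circle of a Klein surface has no canonical decomposition into ``core plus collar annulus'', and ``the modulus of the collar'' is not a function on the moduli space. More seriously, in $\gcal_\Lambda(\alpha,\beta)$ the modulus-one annulus carrying a closed boundary component is a separate \emph{irreducible component}, attached to the disc components at \emph{boundary} nodes (recall that surfaces in $\dnklein$ admit only boundary nodes, so ``gluing a core to an annulus along a circle'' is not a degeneration available in this moduli problem at all). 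Hence the locus you propose to retract onto either is not a subspace of $\overline{\kkk}_\Lambda(\alpha,\beta)$ (if the gluing is along a circle) or lies in the nodal boundary of the moduli space (if the annulus is a genuine irreducible component), and passing from a smooth surface to such a nodal disc/annulus configuration is precisely the content of the graph-decomposition theorems; it cannot be produced by a neck-shrinking retraction performed independently of them and then composed with Proposition~\ref{equiv} on a ``core'' (which, as you set it up, would moreover carry a free circle boundary with no marked points, a configuration not covered by the discs-only statement you want to quote).

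The paper's actual argument is different and much shorter: it observes that Braun's equivalence $\ddd_{g,u,h,n}\hookrightarrow\overline{\kkk}_{g,u,h,n}$ of Proposition~\ref{equiv} persists when interior points of the surfaces (and their images under the involution) are replaced by boundary components, each new boundary component being given one marked point, with at most one such replacement per disc. Under this replacement a disc acquiring a closed boundary becomes an annulus, which is exactly why the closed boundary components of surfaces in $\gcal_\Lambda(\alpha,\beta)$ sit on modulus-one annuli rather than on discs. In other words, the annuli must enter through an extension of the disc-decomposition statement itself, not through a separate collar retraction; if you want to salvage your approach, the work to be done is to justify that extended decomposition (equivalently, redo the M\"obius-graph/arc-system argument of \cite{thbraun12} in the presence of closed boundary components), which is the very point your proposal treats as a formality.
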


\begin{proof}
This result follows from Proposition \ref{equiv} if one observes that the weak homotopy equivalence $\iota:\ddd_{g,u,h,n}\hookrightarrow \overline{\kkk}_{g,u,h,n}$ holds if we replace points on the interior of each surface in $\ddd_{g,u,h,n}$ and their images by $\iota$ in $\overline{\kkk}_{g,u,h,n}$ with boundary components; we replace at most one point in the same disc. The equivalence holds if we include one marked point in each new boundary component.
\end{proof}

For $\alpha,\beta,\gamma\in\ob(\www_\Lambda)$ with $O_\alpha=O_\beta=O_\gamma=0$, there is a category $\overline{\kkk}_{\Lambda,\text{\scriptsize open}}$ with composition given by maps 
$$\overline{\kkk}_{\Lambda,\text{\scriptsize open}}(\alpha,\beta)\times\overline{\kkk}_{\Lambda,\text{\scriptsize open}}(\beta,\gamma)\to\overline{\kkk}_{\Lambda,\text{\scriptsize open}}(\alpha,\gamma)$$ 
which glue outgoing boundary components in $\overline{\kkk}_{\Lambda,\text{\scriptsize open}}(\alpha,\beta)$ to incoming boundary components in $\overline{\kkk}_{\Lambda,\text{\scriptsize open}}(\beta,\gamma)$. The exceptional surfaces are glued as follows: gluing the disc with two outgoing marked points, one incoming and one outgoing, or both incoming, to a surface $\Sigma$ corresponds to gluing the points of $\Sigma$ together. Gluing the disc with one marked point to a marked point of $\Sigma$ corresponds to forgetting the marked point.
\begin{figure}[h]
\centering
\scalebox{1.25}{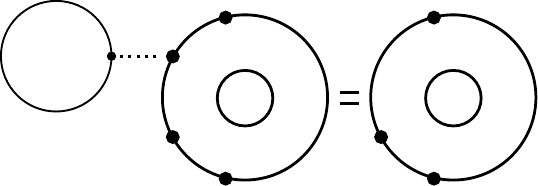}
\end{figure}

The inclusion in Proposition \ref{inclusion} leads to a subcategory $\gcal_{\Lambda, \text{\scriptsize open}}\subset \overline{\kkk}_{\Lambda, \text{\scriptsize open}}$. Observe that disjoint union gives $\overline{\kkk}_{\Lambda, \text{\scriptsize open}}$ and $\gcal_{\Lambda, \text{\scriptsize open}}$ the structure of symmetric monoidal categories. The following result is the unoriented analogue of Proposition 6.1.5 \cite{Costello07}:

\begin{proposition}
The DGSM category $\CC\left(\overline{\kkk}_{\Lambda, \text{\scriptsize open}}\right)$ is quasi-isomorphic to $\oo_\Lambda$. Under the quasi-equivalence 
between $\ob\left(\oc_\Lambda\right)\mhyphen\CC\left(\overline{\kkk}_{\Lambda, \text{\scriptsize open}}\right)$-bimodules and $\ob\left(\oc_\Lambda\right)\mhyphen\oo_\Lambda$-bimodules, $\CC\left(\overline{\kkk}_\Lambda\right)$ is quasi-isomorphic to $\oc_\Lambda$ 
\end{proposition}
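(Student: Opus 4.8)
I would adapt Costello's proof of Proposition 6.1.5 \cite{Costello07} to the unoriented setting, using Braun's M\"obius-graph decomposition of moduli of Klein surfaces \cite{thbraun12} in place of Costello's ribbon-graph decomposition and feeding in the homotopy equivalences collected in the previous section. The plan has two parts: first establish the quasi-isomorphism of DGSM categories $\CC\left(\overline{\kkk}_{\Lambda,\text{open}}\right)\simeq\oo_\Lambda$, and then bootstrap it to the bimodule statement by means of the module-theoretic machinery developed above.

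\emph{The open comparison.} Recall that $\oo_\Lambda=\CC\left(\www_{\Lambda,\text{open}}\right)$, where the open boundary components of the surfaces in $\www_{\Lambda,\text{open}}(\alpha,\beta)$ are \emph{parameterised} oriented intervals, whereas the surfaces in $\overline{\kkk}_{\Lambda,\text{open}}(\alpha,\beta)$ carry only marked points on the boundary (the intervals having been contracted) and are allowed to be nodal. First I would forget the parameterisation of each open interval, retaining only its ordered pair of endpoints; the relevant space of parameterisations compatible with the orientation and the $D$-brane labelling is contractible, so this is a weak homotopy equivalence of orbi-spaces, compatible with gluing, hence defines a DGSM functor which becomes a quasi-isomorphism after applying $\CC$. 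Next I would pass to the nodal completion: the inclusion of the non-singular locus into $\overline{\kkk}_{\Lambda,\text{open}}$ is a homotopy equivalence, which is the open-boundary incarnation of Proposition~\ref{equiv} transported through the equivalence $\dnklein\simeq\dnsymriem$ of Proposition~\ref{equivalence}. Finally, Proposition~\ref{inclusion} restricted to the full subcategory on the purely open objects identifies $\CC\left(\overline{\kkk}_{\Lambda,\text{open}}\right)$ up to quasi-isomorphism with $\CC\left(\gcal_{\Lambda,\text{open}}\right)$, the latter being assembled from discs and annuli of modulus one. Splicing these quasi-isomorphisms of DGSM categories together yields $\CC\left(\overline{\kkk}_{\Lambda,\text{open}}\right)\simeq\oo_\Lambda$; the exceptional surfaces (the disc with at most two open marked points, the annulus and the M\"obius strip of modulus one) are inserted on the $\gcal$-side precisely to realise the units and the Calabi-Yau pairing, exactly as in Costello, now keeping track of the $\Z_2$-colouring of half-edges.

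\emph{The bimodule statement.} Write $\FF\colon\oo_\Lambda\to\CC\left(\overline{\kkk}_{\Lambda,\text{open}}\right)$ for the (zig-zag of) quasi-isomorphism(s) just produced. By the quasi-equivalence theorem above (the unoriented analogue of Lemma 4.4.3 \cite{Costello07}), $\FF$ induces inverse quasi-equivalences between the associated module categories; since the indexing on objects is the fixed discrete category $\ob(\oc_\Lambda)$ on both sides, these pass to inverse quasi-equivalences between $\ob(\oc_\Lambda)\mhyphen\CC\left(\overline{\kkk}_{\Lambda,\text{open}}\right)$-bimodules and $\ob(\oc_\Lambda)\mhyphen\oo_\Lambda$-bimodules. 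Now $\CC\left(\overline{\kkk}_\Lambda\right)$ carries a natural $\ob(\oc_\Lambda)\mhyphen\CC\left(\overline{\kkk}_{\Lambda,\text{open}}\right)$-bimodule structure obtained by gluing open boundary components, the $\ob(\oc_\Lambda)$-indexing recording the closed-boundary and $D$-brane data; likewise $\oc_\Lambda$ is an $\ob(\oc_\Lambda)\mhyphen\oo_\Lambda$-bimodule. Running the same three-step comparison — forgetting parameterisations, nodal completion, disc model via Proposition~\ref{inclusion} — now for surfaces with arbitrary open and closed boundary, and applying $\CC$, produces a zig-zag of quasi-isomorphisms of bimodules compatible with both actions, which exhibits $\CC\left(\overline{\kkk}_\Lambda\right)$ as corresponding to $\oc_\Lambda$ under the quasi-equivalence above. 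Proposition~\ref{costello444} ensures that this identification is compatible with the derived tensor products needed downstream.

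\emph{Main obstacle.} The delicate point is not the levelwise homotopy equivalences, which are routine, but upgrading the comparisons to genuine DGSM functors compatible with composition: the contracted-interval gluing in $\overline{\kkk}_\Lambda$, together with the bookkeeping role of the exceptional surfaces, must be reconciled with the honest gluing of parameterised surfaces, and this requires a coherent system of homotopies. This coherence is the heart of Costello's argument in the oriented case; the unoriented refinement is to check that the homotopies respect the M\"obius structure — the $\Z_2$-colouring of half-edges and the two admissible actions of an isomorphism at a vertex. I expect this to be the main work, and it can be carried out either directly on M\"obius graphs or by reducing, via the equivalence of Proposition~\ref{equivalence}, to the involution-equivariant version of Costello's construction on $\dnsymriem$.
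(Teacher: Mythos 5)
There is a genuine gap, and it sits exactly at the point you flag as the ``main obstacle'' and then defer. Your opening step --- forget the parameterisation of each open interval (and, implicitly, contract it to a marked point) and claim this is ``compatible with gluing'' --- is not a functor as stated: composition in $\www_{\Lambda,\text{open}}$ glues parameterised intervals, while composition in $\overline{\kkk}_{\Lambda,\text{open}}$ glues boundary marked points by forming a boundary node, and the contraction map intertwines these two compositions only up to homotopy, not on the nose. So the levelwise weak equivalences you list do not by themselves assemble into a zig-zag of DGSM functors, and your proposal leaves the resolution of this as ``a coherent system of homotopies'' to be supplied later. That is the content of the proposition, not a routine check; without it nothing is proved.

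The paper (following Costello's Proposition 6.1.5) resolves this not with coherent homotopies but with a concrete intermediate category that you do not construct: a category $\www'_{\Lambda,\text{open}}$, with the same objects, whose morphism spaces are moduli of nodal Klein surfaces in $\dnklein$ carrying parameterised open boundary intervals disjoint from each other and from the nodes, and whose composition is made strictly associative by attaching a parameter $t\in[0,1/2]$ to each outgoing interval to control the gluing. Both $\www_{\Lambda,\text{open}}(\alpha,\beta)$ and $\overline{\kkk}_{\Lambda,\text{open}}(\alpha,\beta)$ then include into $\www'_{\Lambda,\text{open}}(\alpha,\beta)$ (at the parameter values $0$ and $1/2$ respectively) by honest DGSM functors which are homotopy equivalences on morphism spaces; applying $\CC$ gives the quasi-isomorphism $\CC\bigl(\overline{\kkk}_{\Lambda,\text{open}}\bigr)\simeq\oo_\Lambda$, and the same thickening applied to surfaces with closed outputs gives the bimodule comparison of $\CC\bigl(\overline{\kkk}_\Lambda\bigr)$ with $\oc_\Lambda$. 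Two smaller points: your detour through $\gcal_{\Lambda,\text{open}}$ via Proposition \ref{inclusion} is not needed here (that disc model is the content of the subsequent result, Proposition \ref{quasiisos}), and your appeal to Proposition \ref{equiv} for ``the inclusion of the non-singular locus is a homotopy equivalence'' misquotes it --- that proposition compares the subspace of surfaces with disc components to $\overline{\kkk}_{g,u,h,n}$, which is a different statement. The bimodule half of your argument is fine in outline, but it rests on the unconstructed open comparison.
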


\begin{proof}[Sketch of the proof]
The proof for this result is akin to the proof for Proposition 6.1.5 \cite{Costello07}. Let us remind the main points: to start off, the main idea is to find a category $\www'_{\Lambda,\text{\scriptsize open}}$ with the same objects as $\www_{\Lambda,\text{\scriptsize open}}$ together with functors setting homotopy equivalences between the spaces of arrows. Then we will just need to apply $\CC$ and the result will follow. The new category will be created, essentially, by thickening the marked points to transform them into intervals.
\\\\
For a pair of objects $\alpha,\beta\in \ob(\www_\Lambda)$, let us denote $\www'_{\Lambda,\text{\scriptsize open}}(\alpha,\beta)$ the moduli space of Klein surfaces in $\dnklein$ (like $\overline{\kkk}_\Lambda(\alpha,\beta)$) where the marked open boundaries have been replaced by parameterized intervals (like $\www_\Lambda(\alpha,\beta)$). We do not allow these intervals to intersect each other or the nodes on the boundary of the surfaces. By associating each outgoing open boundary interval with a number $t\in[0,1/2]$, we define gluing maps 
$$\www'_{\Lambda,\text{\scriptsize open}}(\alpha,\beta)\times \www'_{\Lambda,\text{\scriptsize open}}(\beta,\gamma)\to \www'_{\Lambda,\text{\scriptsize open}}(\alpha,\gamma),$$ 
making $\www'_{\Lambda,\text{\scriptsize open}}$ into a category. 
\\\\
Inclusions $\www_{\Lambda,\text{\scriptsize open}}(\alpha,\beta)\hookrightarrow \www'_{\Lambda,\text{\scriptsize open}}(\alpha,\beta)$ and $\overline{\kkk}_{\Lambda,\text{\scriptsize open}}(\alpha,\beta)\hookrightarrow \www'_{\Lambda,\text{\scriptsize open}}(\alpha,\beta)$ mapping $0$ and $1/2$ to open boundaries respectively define homotopy equivalences on the spaces of morphisms.
\end{proof}

We follow \cite{Costello07} to give $\gcal_\Lambda(\alpha,\beta)$ a cell decomposition. Assuming that $\K$ has characteristic zero, let $\Sigma\in \gcal_\Lambda(\alpha,\beta)$ and assume $A\subset \Sigma$ is an irreducible component which is an annulus with a closed boundary. We write $A_{\text{\scriptsize open}}$ and $A_{\text{\scriptsize closed}}$ for the open and closed boundary components of $A$, respectively. Let $p\in A_{\text{\scriptsize closed}}$ be the unique marked point in the closed boundary component. We can identify $A$ with the cylinder $\s^1\times [0,1]$ in a way that identifies $p$ with the point $(1,0)$. This identification allows us to cut $A$ from $p$ to a point of $A_{\text{\scriptsize open}}$. We declare:
\begin{enumerate} 
\item the 0-cells are the marked points, the nodes and the intersection points between the cut and $A_{\text{\scriptsize open}}$;
\item the 1-cells are defined to be the boundary components $A_{\text{\scriptsize open}}$, $A_{\text{\scriptsize closed}}$ and the cut;
\item the 2-cell is $\Sigma$.
\end{enumerate}
\begin{figure}[h]
\centering
\scalebox{1}{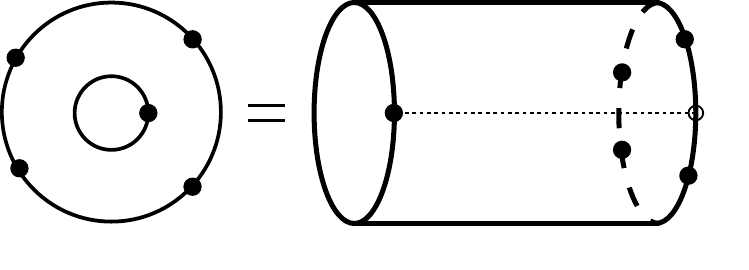}
\caption{In this picture the 0-cells are the marked points, the point (1,0) and the ``white point'' on the right, whereas the 1-cells are $A_{\text{\scriptsize open}}$, $A_{\text{\scriptsize closed}}$ and the ``dotted line''.}
\end{figure}

This process yields a stratification of $\gcal_\Lambda(\alpha,\beta)$ by saying that two surfaces are in the same level if the corresponding marked 2-cell complexes are isomorphic.
\\\\
Let $\CC^{\text{cell}}$ be the functor taking a finite cell complex to an object in $\comp_{\K}$ (see Apendix A \cite{Costello07}); we define the bimodule $\ddd_\Lambda(\alpha,\beta):=\CC^{\text{cell}}\left(\gcal_\Lambda(\alpha,\beta)\right)$
which, by the quasi-isomorphism $\CC^{\text{\scriptsize cell}}(X)\to \CC(X)$ (for $X$ an orbi-cell complex), leads to the following result, which is the unoriented analogue of Lemma 6.1.7 \cite{Costello07}:

\begin{proposition} \label{quasiisos}
There is a quasi-isomorphism of DGSM categories: $\ddd_{\Lambda, \text{open}}\cong \oo_\Lambda$, where we define $\ddd_{\Lambda, \text{open}}(\alpha,\beta)$ as $\CC^{\text{\scriptsize cell}}\left(\gcal_{\Lambda, \text{\scriptsize open}}(\alpha,\beta)\right)$, whereas $\ddd_\Lambda$ is quasi-isomorphic to $\oc_\Lambda$.
\end{proposition}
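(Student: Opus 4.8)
The plan is to interpolate between $\ddd_{\Lambda,\text{open}}$ and $\oo_\Lambda$, and between $\ddd_\Lambda$ and $\oc_\Lambda$, through the singular chains on the orbi-spaces $\gcal_\Lambda(\alpha,\beta)$ and $\overline{\kkk}_\Lambda(\alpha,\beta)$, by exhibiting the chain of maps
$$\ddd_\Lambda(\alpha,\beta)=\CC^{\text{cell}}\big(\gcal_\Lambda(\alpha,\beta)\big)\longrightarrow \CC\big(\gcal_\Lambda(\alpha,\beta)\big)\longrightarrow \CC\big(\overline{\kkk}_\Lambda(\alpha,\beta)\big)\simeq \oc_\Lambda(\alpha,\beta),$$
and the analogous chain with the subscript ``$\text{open}$'', then checking each arrow is a quasi-isomorphism compatible with composition (and, in the non-``open'' case, with the $\ob(\oc_\Lambda)$-actions), so that the whole chain is a quasi-isomorphism of DGSM categories, respectively of bimodules.

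First I would verify that the stratification of $\gcal_\Lambda(\alpha,\beta)$ by the marked $2$-cell complexes constructed above descends to isomorphism classes and is compatible with the gluing maps, so that $\ddd_{\Lambda,\text{open}}(\alpha,\beta):=\CC^{\text{cell}}(\gcal_{\Lambda,\text{open}}(\alpha,\beta))$ really does assemble into a DGSM category and $\ddd_\Lambda(\alpha,\beta):=\CC^{\text{cell}}(\gcal_\Lambda(\alpha,\beta))$ into an $\ob(\oc_\Lambda)\mhyphen\ddd_{\Lambda,\text{open}}$-bimodule. Concretely one must check that the composite of two surfaces whose irreducible components are discs and modulus-one annuli (each annulus cut from the marked point $p\in A_{\text{closed}}$ to $A_{\text{open}}$) again carries a canonical cell structure of the prescribed form, that this assignment is strictly associative, and that the exceptional surfaces act as in \cite{Costello07} (gluing a once-marked disc forgets a marked point, gluing a twice-marked disc identifies two marked points). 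This is the unoriented counterpart of the construction in \cite{Costello07}, and the parametrisation $t\in[0,1/2]$ of the outgoing annuli is what makes the gluing well behaved. Once this is in place, the natural comparison map $\CC^{\text{cell}}(X)\to\CC(X)$ for an orbi-cell complex $X$ (Appendix A of \cite{Costello07}) is a quasi-isomorphism and commutes with gluing, which gives the first arrow above as a quasi-isomorphism of DGSM categories, resp.\ of bimodules.

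For the second arrow, \propref{inclusion} asserts that $\gcal_\Lambda(\alpha,\beta)\hookrightarrow\overline{\kkk}_\Lambda(\alpha,\beta)$ is a weak homotopy equivalence of orbi-spaces; since $\K$ has characteristic zero, applying $\CC$ turns this into a quasi-isomorphism, and as the inclusion is by construction compatible with composition and with the $\ob(\oc_\Lambda)$-actions, it is a quasi-isomorphism of DGSM categories, resp.\ bimodules. Finally the Proposition proved just above identifies $\CC(\overline{\kkk}_{\Lambda,\text{open}})$ with $\oo_\Lambda$ and $\CC(\overline{\kkk}_\Lambda)$ with $\oc_\Lambda$ up to quasi-isomorphism, compatibly with the bimodule structure. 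Composing the three quasi-isomorphisms yields $\ddd_{\Lambda,\text{open}}\cong\oo_\Lambda$, and then, under the induced quasi-equivalence between the relevant categories of bimodules (as in \propref{costello444}), $\ddd_\Lambda\simeq\oc_\Lambda$.

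The step I expect to be the main obstacle is the first one: making the cell decomposition strictly functorial for composition. One has to choose the cut on each annular component consistently enough that gluing does not introduce new cells in an uncontrolled way or break associativity, and one must check that the exceptional surfaces (the disc with at most two marked points, and the annulus and M\"obius strip with no marked points) are glued so that their effect — forgetting a marked point, or identifying two marked points — is faithfully recorded at the chain level. The genuinely unoriented ingredients, M\"obius components and the colouring data on half-edges, are handled by passing via \propref{equivalence} to symmetric Riemann surfaces, after which the combinatorics is essentially that of \cite{Costello07}; the remaining steps are then formal consequences of \propref{inclusion}, the cellular-to-singular comparison, and the preceding Proposition.
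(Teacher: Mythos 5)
Your proposal is correct and takes essentially the same route as the paper: the paper deduces the proposition directly from the quasi-isomorphism $\CC^{\text{cell}}(X)\to\CC(X)$ for orbi-cell complexes, combined with \propref{inclusion} and the preceding proposition identifying $\CC\left(\overline{\kkk}_{\Lambda,\text{open}}\right)$ with $\oo_\Lambda$ and $\CC\left(\overline{\kkk}_\Lambda\right)$ with $\oc_\Lambda$, which is exactly the chain of quasi-isomorphisms you write down. The additional verifications you flag (compatibility of the cell structure with gluing and with the bimodule actions) are left implicit in the paper, so your write-up just makes the same argument more explicit.
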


\subsection{Generators and relations} 

Using the equivalences of categories stated in Proposition \ref{equivalence}, we can move some of the results in \cite{Costello07} into the Klein setting. This implies the definition of several categories, analogous to those appearing in \cite{Costello07}, which will simplify the problem of understanding KTCFTs in terms of involutive $A_\infty$-categories.
\\
A DG category $\aaa$ is generated by some set of arrows $A$ if $\homo_{\aaa}$ has $A$ as a generating set; $\aaa$ has $R$ as a set of relations if $\homo_{\aaa}$ is given by the quotient $A/R$. We say that $\aaa$ is generated as a symmetric monoidal category by $A$ modulo $R$ if $\homo_{\aaa}$ is of the form $A/R$ and the axioms of symmetric monoidal categories are satisfied.
\\\\
Let $\ddd^+_{\Lambda, \text{open}}\subset \ddd_{\Lambda,\text{open}}$ be the subcategory with the same objects but where a morphism is given by a disjoint union of discs, with each connected component having exactly one outgoing boundary marked point. For an ordered set $\lambda_0, \dots, \lambda_{n-1}$ of D-branes, with $n\geq 1$, let $[\lambda_n]:=\{\lambda_0, \dots, \lambda_{n-1}\}\in \ob\left(\oc_\Lambda\right)$ with $O=n$, $s(i)=\lambda_i, t(i)=\lambda_{i+1}$ for $0\leq i\leq O-1$; we use the notation $[\lambda_n]^c:=\{\lambda_1, \dots, \lambda_{n-1},\lambda_0\}$. Let us define $D^+(\lambda_0,\dots,\lambda_{n-1})$ as the disc with $n$ marked points and D-brane labelling given by the different $\lambda_i$, where all the boundary marked points are incoming except for that between $\lambda_{n-1}$ and $\lambda_0$, which is outgoing. The boundary components of the discs are compatibly oriented. There are exceptional morphisms in $\ddd^+_{\Lambda, \text{open}}$ given by discs $D^\tau(\lambda_i,\lambda_{i+1})$ (for $i\in\{0,\dots, n-2\}$), which will be called a \textit{twisted discs}. The particularity of these discs is that, contrary to the discs $D^+(\lambda_0,\dots,\lambda_{n-1})$, they have boundary components oriented incompatibly. 
\vspace{.25cm}
\begin{figure}[h]
\centering
\scalebox{2}{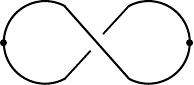}
\caption{\textit{A twisted disc.}}
\end{figure}

Let $\ccc\subset \ddd_{\Lambda,\text{open}}$ be the subcategory with $\ob\left(\ccc\right)=\ob\left(\ddd_{\Lambda,\text{open}}\right)$ but whose arrows are not allowed to have connected components which are the disk with at most 1 open marked point, or the disc with two open marked incoming points or the annulus with neither open nor closed marked points. The morphisms in $\ccc$ are assumed to be not complexes but graded vector spaces.

\begin{proposition}\label{generators}
Let $D(\lambda_0,\dots, \lambda_{n-1})$ be the disc in $\ddd_{\Lambda,\text{open}}$ whose marked points are all incoming. The subcategory $\ccc$ is freely generated, as a symmetric monoidal category over $\ob\left(\ddd_{\Lambda,\text{open}}\right)$, by the discs $D(\lambda_0,\dots, \lambda_{n-1})$ (for $n\geq 3$), the twisted discs $D^\tau(\lambda_i,\lambda_{i+1})$ (for $0\leq i\leq n-2$) and the discs with two outgoing marked points, subject to the relation that $D(\lambda_0,\dots, \lambda_{n-1})$ is cyclically symmetric: $D(\lambda_0,\dots, \lambda_{n-1})=\pm D(\lambda_1,\dots, \lambda_{n-1}, \lambda_0)$.
\end{proposition}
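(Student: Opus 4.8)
\emph{Proof strategy.} The plan is to transport the proof of the oriented counterpart in \cite{Costello07} across the machinery already set up, the one genuinely new ingredient being the twisted discs, which measure the failure of the boundary orientations of a disc to be globally compatible. Throughout I would identify a morphism of $\ddd_{\Lambda,\text{open}}$ with the decorated surface it represents: by the construction of $\gcal_{\Lambda,\text{open}}$ and \propref{quasiisos}, this is a disjoint union of discs, each carrying a cyclic sequence of marked points, every point tagged incoming or outgoing and equipped with a local orientation, with D-branes labelling the arcs between consecutive marked points compatibly with $s$ and $t$.

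First I would reduce to connected morphisms. Since $\ccc$ is being presented as a symmetric monoidal category over a fixed object class, every morphism is a disjoint-union product of connected morphisms, and the symmetric monoidal axioms account for permuting the factors; so it suffices to build every connected morphism of $\ccc$ from the listed generators and to identify the relations among such composites. A connected morphism of $\ccc$ is a single disc with $n\geq 2$ marked points that is not the two-incoming disc; the two-point cases are either an identity (one incoming, one outgoing) or the two-outgoing generator itself, so the content lies in the discs with $n\geq 3$ marked points.

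Next, given such a disc $\Delta$, I would bring it to standard form by peeling legs off one at a time. Gluing the two-outgoing disc onto an outgoing leg of $\Delta$ along one of its two outgoing points turns that leg into an incoming one without changing the number of marked points; iterating over all outgoing legs reduces $\Delta$ to an all-incoming disc. Similarly, gluing a twisted disc $D^\tau$ onto a leg reverses the local orientation there, so after finitely many such insertions $\Delta$ becomes an all-incoming, compatibly oriented disc $D(\lambda_0,\dots,\lambda_{n-1})$, a generator; here the range $0\leq i\leq n-2$ for the twisted discs is exactly the set of non-distinguished adjacencies, the orientation at the remaining marked point being pinned once a basepoint of the cyclic order is fixed. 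This shows the generators span $\homo_{\ccc}$. The cyclic relation appears because $D(\lambda_0,\dots,\lambda_{n-1})$ and $D(\lambda_1,\dots,\lambda_{n-1},\lambda_0)$ are the same surface, the rotation identifying them acting on the cellular model with the sign recorded in \cite{Costello07}.

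Finally --- and this is the step I expect to be the main obstacle --- I would show that there are no further relations, i.e.\ that the presentation is free modulo the cyclic symmetry. For this I would use $\ddd_{\Lambda,\text{open}}(\alpha,\beta)=\CC^{\text{cell}}\!\left(\gcal_{\Lambda,\text{open}}(\alpha,\beta)\right)$ from \propref{quasiisos}: a basis of this graded vector space is indexed by the orbi-cells, each of which is precisely an isomorphism class of disjoint union of decorated discs, and gluing of surfaces corresponds termwise to composition of monomials in the generators. Hence a monomial in the generators is determined, up to the symmetric monoidal axioms and the cyclic relation, by the surface it assembles, so two monomials agree in $\ccc$ only if related by those moves. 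The delicate points are (i) checking, in the Klein setting, that the local orientation and colouring data at the marked points is faithfully tracked by the twisted discs, with neither over- nor under-counting of flips, and (ii) keeping the signs coherent, which one does by transporting the sign bookkeeping of \cite{Costello07} through the equivalence $\dnklein\simeq\dnsymriem$ of \propref{equivalence} and the cell decomposition of $\gcal_\Lambda$ set up above.
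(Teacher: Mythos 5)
Your spanning argument is essentially the paper's: one builds a functor from the free symmetric monoidal category $\widetilde\eee$ on the listed generators to $\ccc$ and observes that the twisted discs flip the local orientation at a marked point while the disc with two outgoing points converts incoming points into outgoing ones, so every surface in $\homo_{\ccc}(\alpha,\beta)$ is assembled from disjoint unions of generators and gluings. One directional slip: you cannot glue the two-outgoing disc ``onto an outgoing leg of $\Delta$'' --- it has no incoming points to receive an outgoing leg under the category's composition. What actually happens (and what the paper says) is the opposite factorization: $\Delta$ is written as an all-incoming disc with copies of $D_{out}(\lambda_0,\lambda_1)$ attached at those incoming points that are to become outgoing, together with twisted discs where orientations must be reversed.

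The genuine gap is in the freeness half, precisely where you flagged the main obstacle. The fact that $\ddd_{\Lambda,\text{open}}(\alpha,\beta)=\CC^{\text{cell}}\left(\gcal_{\Lambda,\text{open}}(\alpha,\beta)\right)$ has a basis indexed by the decorated surfaces shows only that a linear relation among monomials in the generators can identify monomials assembling the \emph{same} surface; it says nothing about whether two such monomials are already equal in $\widetilde\eee$ modulo the symmetric monoidal axioms and cyclic symmetry. Your ``hence a monomial in the generators is determined, up to the symmetric monoidal axioms and the cyclic relation, by the surface it assembles'' is the converse of what the cell model gives you, and it is exactly the statement that needs proof. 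The paper closes this by exhibiting an inverse functor $\ccc\to\widetilde\eee$ (identity on objects) via a canonical normal form: every $\Sigma\in\ccc(\alpha,\beta)$ is decomposed as $\Sigma'\circ\Upsilon$, where $\Sigma'$ is a disjoint union of identities, all-incoming discs and twisted discs, and $\Upsilon$ is a disjoint union of identities, two-outgoing discs and twisted discs; well-definedness of this decomposition on isomorphism classes is what yields faithfulness, hence freeness. Without such a normal form (or an equivalent count showing $\widetilde\eee(\alpha,\beta)$ is no larger than the span of the cells), your argument asserts freeness rather than establishing it; supplying that factorization would complete your proof along the paper's lines.
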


\begin{proof}
The proof for this result follows the steps of Proposition 6.2.1 \cite{Costello07}. If we denote by $\widetilde\eee$ a category with the set of generators and relations stated in the assumptions of the Proposition. We can construct a fully faithful functor $\widetilde{\eee}\to\ccc$, indeed: to prove that the functor is full we observe every surface in $\homo_{\ccc}(\alpha, \beta)$ can be built using disjoint unions of surfaces in $\widetilde{\eee}$ and gluing discs. Observe that the twisted disc, as remarked above, allows us to change the orientations of the marked points, whilst the disc with two outgoing marked points turns incoming boundaries into outgoing boundaries.
\\\\
In order to check that $\widetilde{\eee}\to\ccc$ is faithful, we construct an inverse functor $\ccc\to\widetilde{\eee}$, which is the identity on objects . Let us consider $\Sigma\in\ccc(\alpha,\beta)$, then we can write $\Sigma=\Sigma'\circ \Upsilon$, where both $\Sigma',\Upsilon$ are surfaces in $\widetilde{\eee}$. The surface $\Sigma'$ is composed by disjoint unions of identity maps, discs with all incoming boundaries and twisted discs; the surface $\Upsilon$ is composed by disjoint unions of identity maps, discs with two outgoing boundaries and twisted discs. This decomposition allows us to write a map $\ccc(\alpha,\beta)\to \widetilde{\eee}(\alpha,\beta)$. We conclude that the functor $\widetilde{\eee}\to\ccc$ is faithful.
\end{proof}


\begin{proposition} \label{KleinMess}
The category $\ddd^+_{\Lambda, \text{\scriptsize open}}$ is freely generated as a differential graded symmetric monoidal category 
, by the discs $D^+(\lambda_0,\dots,\lambda_{n-1})$ and $D^\tau(\lambda_i,\lambda_{i+1})$, modulo the relations: 
\begin{enumerate}
\item For $n=2:\, D^\tau(\lambda_0,\lambda_1)\circ D^\tau(\lambda_0,\lambda_1)=\id_{\{\lambda_0,\lambda_1\}}$;
\label{one}
\begin{figure}[h]
\centering
\scalebox{1.5}{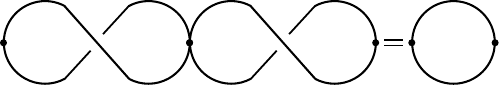}
\end{figure}


\item for $n=3$ we have: $D^+(\lambda_0,\lambda_0,\lambda_1)\circ D^+(\lambda_0)=\id_{\{\lambda_0,\lambda_1\}}=D^+(\lambda_0,\lambda_1,\lambda_1)\circ D^+(\lambda_1);$

\begin{figure}[h]
\centering
\scalebox{1.8}{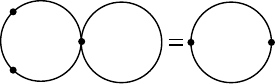}
\end{figure}
\item for $n\geq 3$, gluing twisted discs $D^\tau(\lambda_i,\lambda_{i+1})$ to each incoming boundary component of the discs $D^+(\lambda_0,\dots,\lambda_{n-1})$ is equivalent to gluing a twisted disc $D^\tau(\lambda_0,\lambda_{n-1})$ to the outgoing boundary component of $D^+(\lambda_0,\dots,\lambda_{n-1})$; \label{kleinthree} 
\begin{figure}[H]
\centering
\scalebox{.8}{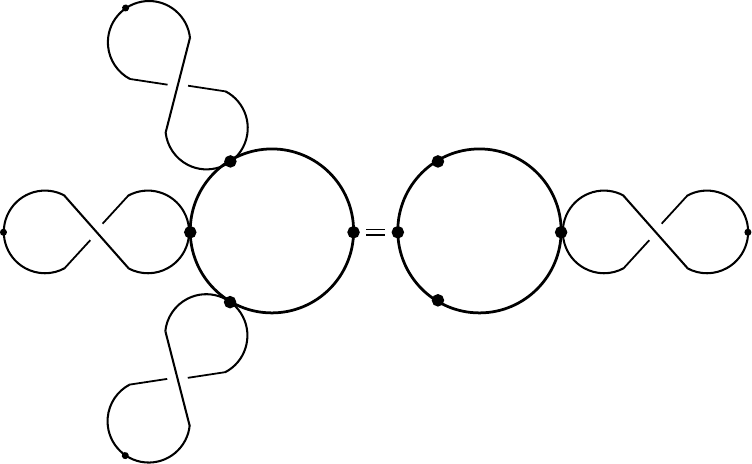}
\end{figure}

\item for $n\geq 4:\, D^+(\lambda_0,\dots,\lambda_i,\lambda_i,\dots, \lambda_{n-1})\circ D^+(\lambda_i)=0$. \label{kleinfour}
\end{enumerate}
\end{proposition}

\begin{remark}
Observe that relation \ref{kleinfour} means that gluing properly a disc with one marked point to $D^+(\lambda_0,\dots,\lambda_i,\lambda_i,\dots, \lambda_{n-1})$ deletes the corresponding marked point. This relation is easy to check and we can have an intuition of its validity when we think of surfaces representing cell complexes.
\end{remark}

\begin{proof}[Sketch of the proof]
This result is the analogue of Lemma 6.2.2 \cite{Costello07}. In particular the relations hold if we depict the surfaces with an appropiate labelling of the boundary marked points.
\end{proof}

\begin{theorem} \label{theorem_Mess}
Let $D_{in}(\lambda_0,\lambda_1)$ and $D_{out}(\lambda_0,\lambda_1)$ be the discs with two incoming or two outgoing boundary components respectively. The category $\ddd_{\Lambda, \text{open}}$ is freely generated, as DGSM category over $\ob(\ddd_{\Lambda,\text{open}})$, by $\ddd^+_{\Lambda, \text{open}}$, $D_{in}(\lambda_0,\lambda_1)$ and $D_{out}(\lambda_0,\lambda_1)$ modulo the following relations:
\begin{enumerate}
  \item An appropiate gluing of a disc with two outgoing boundary components to a disc with two incoming boundary components yields the identity;
  
  


  \item the disc $D(\lambda_0,\dots,\lambda_{n-1})$, whose marked points are all incoming, is cyclically symmetric under the existing permutation isomorphism $[\lambda_n]\cong[\lambda_n]^c.$
\end{enumerate}
\end{theorem}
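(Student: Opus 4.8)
The plan is to mirror the proof of Lemma 6.2.3 in \cite{Costello07}, using Proposition~\ref{KleinMess} as the input describing $\ddd^+_{\Lambda,\text{open}}$ and adding the two extra generators $D_{in}(\lambda_0,\lambda_1)$, $D_{out}(\lambda_0,\lambda_1)$ that allow one to flip the role of incoming versus outgoing marked points on a disc. Write $\widetilde{\ddd}$ for the DGSM category over $\ob(\ddd_{\Lambda,\text{open}})$ freely generated by $\ddd^+_{\Lambda,\text{open}}$ together with $D_{in}$ and $D_{out}$ modulo relations (1) and (2) of the statement. I would construct an explicit comparison functor $\Phi\colon\widetilde{\ddd}\to\ddd_{\Lambda,\text{open}}$, identity on objects, sending each generator to the correspondingly-named surface, and then show $\Phi$ is an isomorphism of DGSM categories by producing a two-sided inverse on morphism spaces.

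First I would check that $\Phi$ is well defined, i.e. that relations (1) and (2) are satisfied in $\ddd_{\Lambda,\text{open}}=\CC^{\text{cell}}\bigl(\gcal_{\Lambda,\text{open}}(\alpha,\beta)\bigr)$: relation (1) is the statement that gluing $D_{out}$ to $D_{in}$ along the appropriate boundary components produces a surface cell-equivalent to the identity cylinder, and relation (2) is exactly the cyclic symmetry of the all-incoming disc already recorded in Proposition~\ref{generators}. Next, for surjectivity of $\Phi$ on each $\homo(\alpha,\beta)$: every connected surface in $\gcal_{\Lambda,\text{open}}$ is a disc or an annulus of modulus one with an outgoing side, and each such piece decomposes — as in Costello — as a composite of an all-incoming disc $D(\lambda_0,\dots,\lambda_{n-1})$ (or a twisted disc, or an annulus built from $D^+$ and $D_{out}$) post-composed with a disjoint union of $D_{out}$'s and identities to install the correct incoming/outgoing pattern and orientations; the twisted-disc generators of $\ddd^+_{\Lambda,\text{open}}$ handle orientation reversal. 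This shows $\Phi$ is full. For faithfulness I would build the inverse functor $\Psi\colon\ddd_{\Lambda,\text{open}}\to\widetilde{\ddd}$ by choosing, for each surface, a normal form of such a decomposition and checking that any two decompositions of the same surface are related by the relations (1), (2) together with the relations already present in $\ddd^+_{\Lambda,\text{open}}$ from Proposition~\ref{KleinMess} — in particular relation~\ref{kleinthree} there is what lets one slide twisted discs past a branching disc, which is the mechanism that makes the normal form unique up to the allowed moves.

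The main obstacle I expect is precisely this last uniqueness-of-normal-form step: one must verify that the list of relations is \emph{complete}, i.e. that no further relation is forced by the cell structure on $\gcal_{\Lambda,\text{open}}(\alpha,\beta)$. Concretely, the danger is that two genuinely different words in the generators could represent cell-equivalent surfaces without being connected by (1), (2) and the $\ddd^+$-relations; ruling this out requires a careful bookkeeping of how the 0-, 1- and 2-cells of a surface in $\gcal_{\Lambda,\text{open}}$ are determined by, and conversely determine, the combinatorial data of the decomposition (which boundary marked points are incoming/outgoing, their cyclic order, and the twisting data on each edge). As in Costello this is ultimately a finite case analysis once one fixes the topological type, and the characteristic-zero hypothesis is used exactly as there to control the automorphisms (the cyclic symmetry in relation (2) being the $\mathbb{Z}/n$-worth of automorphisms of the disc with $n$ marked points). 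I would also flag that, compared to Costello, the twisted discs introduce genuinely new words, so the completeness check must explicitly include all relations among compositions involving $D^\tau$; relation~\ref{one} of Proposition~\ref{KleinMess} ($D^\tau\circ D^\tau=\id$) and relation~\ref{kleinthree} are the two that do the work, and I expect verifying that these suffice to normalize any twisted-disc subword to be the delicate part.
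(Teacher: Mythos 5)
Your proposal is correct and follows essentially the same route as the paper: the paper's proof of Theorem~\ref{theorem_Mess} simply says it ``follows the arguments used in Proposition~\ref{generators}'', i.e.\ the Costello-style comparison functor from the freely generated category, fullness via decomposing each surface into generating discs, and faithfulness via an inverse functor built from a (normal-form) decomposition, which is exactly your plan. Your additional remarks on verifying the relations in $\CC^{\text{cell}}\bigl(\gcal_{\Lambda,\text{open}}\bigr)$ and on normalizing twisted-disc subwords via relations \ref{one} and \ref{kleinthree} of Proposition~\ref{KleinMess} only make explicit what the paper leaves implicit.
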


\begin{remark}
An appropiate gluing of discs means that we have to glue one disc after the other; if we glue their boundary components we would get an annulus, and this is not what we are looking for.
\end{remark}

\begin{proof}
The proof follows the arguments used in Proposition \ref{generators}.
\end{proof}

We denote by $A(\lambda_0,\dots, \lambda_{n-1})$ the annulus with $n\geq 1$ marked points and the intervals between them labelled with D-branes with the inner boundary component labelled as closed. As in the case of the discs $D^+(\lambda_0,\dots,\lambda_{n-1})$, the boundary components of the annuli $A(\lambda_0,\dots, \lambda_{n-1})$ are compatibly oriented. 

\begin{theorem}\label{annuli_Mess}
The annuli $A(\lambda_0,\dots, \lambda_{n-1})$, 
the identity in $\ddd_{\Lambda, \text{open}}(\alpha,\alpha)$ and the twisted discs freely generate $\ddd_\Lambda$ as an $\ob\left(\oc_\Lambda\right)\mhyphen\ddd_{\Lambda, \text{\scriptsize open}}$-bimodule, modulo the following relations: 
\begin{enumerate}
\item Gluing the disc with one marked point $D(\lambda_i)$ to $A(\lambda_0,\dots, \lambda_{n-1})$ in any of the boundary marked points except that between $\lambda_{n-1}$ and $\lambda_0$ yields zero;
\item the disjoint union of the identity element on $\alpha$ with that on $\beta$ is the identity on $\alpha\sqcup \beta$.
\end{enumerate}
\end{theorem}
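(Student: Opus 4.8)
The plan is to mimic the strategy of Proposition~\ref{generators} and Theorem~\ref{theorem_Mess}, now for a bimodule rather than for a monoidal category, transporting the oriented argument of \cite{Costello07} through the dictionary of Proposition~\ref{equivalence}. Write $\NN$ for the $\ob\left(\oc_\Lambda\right)\mhyphen\ddd_{\Lambda,\text{\scriptsize open}}$-bimodule freely generated by the annuli $A(\lambda_0,\dots,\lambda_{n-1})$, the identities $\id\in\ddd_{\Lambda,\text{\scriptsize open}}(\alpha,\alpha)$ and the twisted discs, subject to relations (1) and (2). First I would define a bimodule morphism $\Theta\colon\NN\to\ddd_\Lambda$ sending each generator to the surface it names and extending it by the (trivial) left object-action and the right $\ddd_{\Lambda,\text{\scriptsize open}}$-action. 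For $\Theta$ to be well defined one must check that relations (1) and (2) already hold in $\ddd_\Lambda=\CC^{\text{\scriptsize cell}}\left(\gcal_\Lambda(-,-)\right)$: relation (2) is immediate from the monoidal structure, while relation (1) is the cellular computation that gluing a one-marked-point disc $D(\lambda_i)$ onto an annulus at a non-distinguished marked point collapses a $0$-cell onto a $1$-cell and hence kills the cellular chain --- exactly the phenomenon behind relation~\ref{kleinfour} of Proposition~\ref{KleinMess}.

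Next I would prove that $\Theta$ is full. Given a cell of $\gcal_\Lambda(\alpha,\beta)$ represented by a surface $\Sigma$, its irreducible components are discs and modulus-one annuli, and the annuli components are precisely those carrying the closed outgoing boundary components of $\Sigma$ on their inner side, their outer side being open. Cutting $\Sigma$ along the outer (open) boundary of each such annulus exhibits $\Sigma$ as a right $\ddd_{\Lambda,\text{\scriptsize open}}$-composite $\bigl(\bigsqcup_j A(\dots)\ \sqcup\ \id\ \sqcup\ (\text{twisted discs})\bigr)\circ\Sigma_0$, where $\Sigma_0$ is a disjoint union of discs, hence an element of $\ddd_{\Lambda,\text{\scriptsize open}}(\alpha,\beta')$ for $\beta'$ the object obtained from $\beta$ by deleting its closed components; the orientation mismatches produced at the cuts are absorbed by inserting twisted discs, exactly as in the proof of Proposition~\ref{generators}. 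Thus every $\Sigma$ lies in the image of $\Theta$.

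For faithfulness I would construct an inverse $\Xi\colon\ddd_\Lambda\to\NN$. The decomposition above becomes canonical once one fixes how the cut appearing in the cell structure of each annulus distributes the data: the annuli components of $\Sigma$ are intrinsic, and the residual disc part $\Sigma_0$ is well defined in $\ddd_{\Lambda,\text{\scriptsize open}}$ up only to the free presentation of that category (Theorem~\ref{theorem_Mess}) together with relations (1)--(2). Hence the assignment $\Sigma\mapsto(\text{word in generators})\circ\Sigma_0$ descends to a map $\ddd_\Lambda\to\NN$. One then checks that it is a bimodule map (trivially for the left object-action, and for the right action because precomposing with a disc configuration only modifies $\Sigma_0$), that it is a chain map (the cellular differential of $\CC^{\text{\scriptsize cell}}$ is compatible with the decomposition), and that $\Theta\circ\Xi$ and $\Xi\circ\Theta$ are the identity on generators, hence everywhere; the exceptional surfaces are dealt with by the conventions fixed earlier and cause no difficulty.

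The main obstacle I expect is precisely the well-definedness of $\Xi$: showing that every choice entering the decomposition $\Sigma=(\text{generators})\circ\Sigma_0$ --- the cyclic-ordering and orientation bookkeeping recorded by the twisted discs, and the placement of the cuts along the annuli --- is already accounted for by relations (1)--(2) and by the free presentation of $\ddd_{\Lambda,\text{\scriptsize open}}$, all while keeping the construction compatible with the cellular differential so that $\Theta$ and $\Xi$ are genuine chain maps. Everything else is the same bookkeeping one meets in the oriented case of \cite{Costello07}.
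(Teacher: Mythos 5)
Your proposal is correct and follows essentially the same route as the paper, whose entire proof is the single line ``This result follows from Proposition~\ref{generators}'': you spell out exactly that argument (a comparison map from the free bimodule, fullness by assembling a surface in $\gcal_\Lambda$ from its annulus components, identities and twisted discs over a disc part, faithfulness via the canonical decomposition, with relation (1) playing the role of relation~\ref{kleinfour}). Your sketch is in fact more detailed than the paper's own proof, and the well-definedness worry you flag is handled there only implicitly by appeal to the Proposition~\ref{generators} argument.
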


\begin{proof}
This result follows from Proposition \ref{generators}.
\end{proof}


Let $\ddd^+_\Lambda$ be the $\ob\left(\oc_\Lambda\right)\mhyphen\ddd^+_{\Lambda,\text{\scriptsize open}}$-bimodule with the generators and relations stated above.

\subsection{The differential in $\ddd_\Lambda$} \label{differential}

The definition of the differential for $\mathpzc{D}_\Lambda$ given in \cite{Costello07} can be used in our context. The complexes $\ddd_\Lambda$ admit a differential $d$ which is defined on discs as follows: if $*$ denotes the gluing of the open marked points between $\lambda_i$ and $\lambda_j$:
$$d(D(\lambda_0,\dots,\lambda_{n-1}))=\sum_{\substack{0\leq i \leq j\leq n-1 \\ 2\leq j-i}} \pm D(\lambda_i,\dots, \lambda_j)*D(\lambda_j,\dots, \lambda_i).$$

For annuli, the differential is:
\begin{eqnarray}
d(A(\lambda_0,\dots,\lambda_{n-1})) & = & \sum_{\substack{0\leq i < j\leq n-1 \\ 2\leq |i-j|}} \pm A(\lambda_0,\dots,\lambda_i,\lambda_j,\dots, \lambda_{n-1})*D(\lambda_i,\dots, \lambda_j) \nonumber \\
 & + & \sum_{\substack{0\leq j\leq i\leq n-1 \\ (j,i)\neq (0,n-1)}}\pm A(\lambda_j,\dots, \lambda_i)*D(\lambda_i,\dots, 0,1,\dots, \lambda_j). \nonumber
\end{eqnarray}

\begin{remark}
The signs in the previous formula for the differential are not important for our purposes; nevertheless, we point out that they depend on the orientation chosen for the cells in $\gcal_\Lambda$ of marked points on discs and annuli.
\end{remark}


\begin{lemma}[cf. Lemma 6.3.1 \cite{Costello07}]\label{Costello631}
The assertions below hold:
\begin{enumerate}
\item The $\ob\left(\oc_\Lambda\right)\mhyphen\ddd_{\Lambda,\text{open}}$-bimodule $\ddd_\Lambda$ is $\ddd_{\Lambda,\text{open}}$-flat.
\item If $M$ is a h-split $\ddd_{\Lambda,\text{open}}$-module, then $\ddd_\Lambda\otimes_{\ddd_{\Lambda,\text{open}}}M$ is a h-split $\ob\left(\oc_\Lambda\right)$-module.
\end{enumerate}
These results also hold if one considers $\ddd^+_{\Lambda,\text{open}}$ and $\ddd^+_{\Lambda}$ instead of $\ddd_{\Lambda,\text{open}}$ and $\ddd_{\Lambda}$.
\end{lemma}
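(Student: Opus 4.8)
The plan is to run the unoriented counterpart of the proof of Lemma 6.3.1 in \cite{Costello07}, feeding in \thmref{annuli_Mess} for $\ddd_\Lambda$ over $\ddd_{\Lambda,\text{open}}$ and, for the $\ddd^+$ variant, the analogous presentation of $\ddd^+_\Lambda$ together with \propref{KleinMess}; the two cases go through identically, so I would write up only the first. Since the left-hand action is over the discrete category $\ob(\oc_\Lambda)$, I would fix an object $\alpha\in\ob(\oc_\Lambda)$ and argue with the single right $\ddd_{\Lambda,\text{open}}$-module $\ddd_\Lambda(\alpha,-)$.

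For assertion (1) I would start from the fact that $\ddd_\Lambda(\alpha,\beta)=\CC^{\text{cell}}(\gcal_\Lambda(\alpha,\beta))$ is a free complex on the cells of $\gcal_\Lambda(\alpha,\beta)$, a cell being the combinatorial type of a Klein surface obtained from a \emph{template} of annuli, twisted discs and identities (a generator in the sense of \thmref{annuli_Mess}) by gluing discs along the open boundary; the deletion relation of \thmref{annuli_Mess} is not a quotient imposed afterwards but merely records which combinatorial types occur as cells, so it is already built into $\CC^{\text{cell}}$. Since cells of a gluing are gluings of cells, sorting the cells of $\gcal_\Lambda(\alpha,-)$ by their template exhibits $\ddd_\Lambda(\alpha,-)$, as a graded right $\ddd_{\Lambda,\text{open}}$-module, as the coproduct $\bigoplus_{T}\ddd_{\Lambda,\text{open}}(-,\mathrm{src}(T))$ over templates $T$ with outgoing boundary $\alpha$. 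The cellular differential of \secref{differential} only splits a disc off a template and so replaces $T$ by a template of strictly smaller complexity, as one checks by inspecting the two families of terms in the differential of an annulus and in that of a disc; hence this decomposition, filtered by template complexity (say the number of open marked points), makes $\ddd_\Lambda(\alpha,-)$ a semi-free right $\ddd_{\Lambda,\text{open}}$-module, with $P_i/P_{i-1}\cong\bigoplus_{\mathrm{cx}(T)=i}\ddd_{\Lambda,\text{open}}(-,\mathrm{src}(T))$ as DG modules. Each representable $\ddd_{\Lambda,\text{open}}(-,\gamma)$ is flat, since $N\otimes_{\ddd_{\Lambda,\text{open}}}\ddd_{\Lambda,\text{open}}(-,\gamma)\cong N(\gamma)$ by the co-Yoneda lemma and evaluation at $\gamma$ preserves quasi-isomorphisms of modules; coproducts of flat modules are flat, and a semi-free module is flat. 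This gives (1).

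For assertion (2) I would use the same semi-free structure to compute $\bigl(\ddd_\Lambda\otimes_{\ddd_{\Lambda,\text{open}}}M\bigr)(\alpha)$ from a complex whose underlying graded object is $\bigoplus_{T}M(\mathrm{src}(T))$, the sum over templates $T$ with outgoing boundary $\alpha$, because tensoring a representable summand with $M$ returns $M(\mathrm{src}(T))$. A template with outgoing boundary $\alpha_1\sqcup\alpha_2$ is a disjoint union $T_1\sqcup T_2$ of a template over $\alpha_1$ and one over $\alpha_2$, with $\mathrm{src}(T_1\sqcup T_2)=\mathrm{src}(T_1)\sqcup\mathrm{src}(T_2)$ in $\ddd_{\Lambda,\text{open}}$; feeding this bijection of indexing sets, compatibly with the differentials, into the h-split homology isomorphisms $M(\gamma_1)\otimes M(\gamma_2)\xrightarrow{\ \sim\ }M(\gamma_1\sqcup\gamma_2)$ of $M$ would show that the comparison map
\[
\bigl(\ddd_\Lambda\otimes_{\ddd_{\Lambda,\text{open}}}M\bigr)(\alpha_1)\otimes\bigl(\ddd_\Lambda\otimes_{\ddd_{\Lambda,\text{open}}}M\bigr)(\alpha_2)\ \longrightarrow\ \bigl(\ddd_\Lambda\otimes_{\ddd_{\Lambda,\text{open}}}M\bigr)(\alpha_1\sqcup\alpha_2)
\]
induces an isomorphism on homology; the unit comparison reduces to the only template over the empty object being the empty surface, whose source is the monoidal unit. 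By (1) this underived tensor product already computes the derived one, so nothing is lost on passing to homology.

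The hard part will be the bookkeeping in the flatness step: one must be sure that the presentation of \thmref{annuli_Mess}, with its genuine relations (the deletion relation and the compatibility of identities with disjoint union), really produces a semi-free and not merely a cyclic description of $\ddd_\Lambda(\alpha,-)$. This is where the free generation of $\ddd_{\Lambda,\text{open}}$ (\thmref{theorem_Mess}; \propref{KleinMess} in the $\ddd^+$ case) enters, guaranteeing that the Hom-complexes of $\ddd_{\Lambda,\text{open}}$, and hence the cellular chains of $\gcal_\Lambda$, carry an honest basis of cells on which the deletion relations act by selecting a sub-basis; one must also confirm the triangularity of the cellular differential with respect to the complexity filtration, which is the direct inspection of the terms in \secref{differential} mentioned above, and check the matching of the template decomposition with the monoidal structure on $\ob(\oc_\Lambda)$ used in (2). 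With these in hand the rest is formal and parallels \cite{Costello07}.
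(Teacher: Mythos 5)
There is a genuine gap at the heart of your flatness step. Your plan rests on the claim that sorting cells by their ``template'' exhibits $\ddd_\Lambda(\alpha,-)$ (in each layer of the filtration) as a coproduct of representables $\bigoplus_T\ddd_{\Lambda,\text{open}}(-,\mathrm{src}(T))$, i.e.\ as a semi-free module, with the deletion relation of \thmref{annuli_Mess} ``merely recording which combinatorial types occur as cells.'' That is not so: the deletion relation is a genuine relation on the module structure, not a selection of a sub-basis. The representable $\ddd_{\Lambda,\text{open}}(-,\mathrm{src}(T))$ has among its basis cells those containing a disc with a single marked point attached at one of the glued slots of the annulus template; under the action map these cells go to \emph{zero} (gluing the one-marked-point disc forgets the marked point and drops the cell dimension), they do not index new cells of $\gcal_\Lambda$. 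Hence the natural comparison map $\bigoplus_T\ddd_{\Lambda,\text{open}}(-,\mathrm{src}(T))\to\ddd_\Lambda(\alpha,-)$ is surjective but has a nontrivial kernel, and each graded piece is a \emph{normalized} quotient of a representable (the analogue of $B\otimes\overline{B}^{\otimes n}$ rather than $B^{\otimes(n+1)}$), which is in general neither representable nor free. So ``semi-free $\Rightarrow$ flat'' does not apply, and the very point you defer to the ``bookkeeping'' is where the argument is missing: free generation of $\ddd_{\Lambda,\text{open}}$ does not make these quotients free.

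The correct mechanism — and the one the paper uses, following Costello — is to observe that the relation submodule is exactly the image of the unit-insertion maps, and that unit insertion is a \emph{split} injection: the splitting comes from relation 2 of \propref{KleinMess}, $D^+(\lambda,\lambda,\mu)\circ D^+(\lambda)=\id$, so after tensoring with any module $M$ the maps $M(\alpha\sqcup\widehat{[\lambda_n]^c}_i)\to M(\alpha\sqcup\widecheck{[\lambda_n]^c}_i)$ remain (naturally) split injective, and quotienting by their images is exact. With the filtration by the number of open marked points on the annulus (your complexity filtration, which is fine), this gives exactness of $\ddd_\Lambda\otimes_{\ddd_{\Lambda,\text{open}}}(-)$, hence part (1); part (2) is then the same spanning description $\mathpzc{a}\otimes_{\K}M(\alpha\sqcup[\lambda_n]^c)$ combined with the h-split property of $M$, and the $\ddd^+$ case is identical. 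Your part (2) inherits the same defect, since it also invokes the (false) free decomposition, but it is repaired by the same normalization argument.
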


\begin{proof}
The $\ob\left(\oc_\Lambda\right)\mhyphen\ddd_{\Lambda,\text{open}}$-bimodule $\ddd_\Lambda$ is generated, for $\alpha\in\ob(\ddd_{\Lambda,\text{open}})$, by the identity elements in $\ddd_{\Lambda,\text{open}}(\alpha,\alpha)$, the twisted discs $D^\tau(\lambda_i,\lambda_{i+1})$ and $A(\lambda_0,\dots, \lambda_{n-1})$.
\\\\
We can filter $\ddd_\Lambda$ as a bimodule with a filtration on the generators by saying that the identity element in $\ddd_\Lambda(\alpha,\alpha)$ and the twisted discs are in $F^0$ and each annulus $A(\lambda_0,\dots,\lambda_{n-1})$ is in $F^n$.
\\\\
In order to show the first point of the Lemma, we have to prove that the functor $\ddd_\Lambda\otimes_{\ddd_{\text{open}}}(-)$ is exact, that is: given a quasi-isomorphism $M_1\to M_2$ of h-split $\ddd_{\Lambda,\text{open}}$-modules we must prove that the map below is also a quasi-isomorphism:
\[
\ddd_\Lambda(\beta,-)\otimes_{\ddd_{\Lambda, \text{open}}}M_1(-) \to \ddd_\Lambda(\beta,-)\otimes_{\ddd_{\Lambda, \text{open}}}M_2(-).
\]
Giving both sides the filtration induced by $\ddd_\Lambda(\beta, -)$, it is enough to show the statement on the associated graded complexes.
\\\\
Let $\alpha\in\ob(\ddd_{\Lambda,\text{open}})$ and $\ob\left(\oc_\Lambda\right)\ni\beta=C\sqcup \alpha$ for $C\in \mathbb{N}$; observe that we are adding $C$ closed states to $\alpha$. We will show the result for $C=1$. Let $M$ be a h-split $\ddd_{\Lambda,\text{open}}$-module. In degree $n$, by sending the generators of $\ddd_\Lambda$ which are the identity in $\ddd_{\Lambda,\text{open}}$ to $\alpha$ and the annulus $A(\lambda_0,\dots, \lambda_{n-1}):=\mathpzc{a}$ to $[\lambda_n]^c$, we get that
$\ddd_\Lambda(\alpha\sqcup 1,-)\otimes_{\ddd_{\Lambda, \text{open}}}M(-)$ is spanned by the spaces $\mathpzc{a}\otimes_{\K}M(\alpha\sqcup[\lambda_n]^c)$.
\\\\
Let us introduce the following notation: $\widehat{\left[\lambda_n\right]^c}_i:=\{\lambda_1,\dots, \lambda_{i-1},\lambda_{i+1},\dots, \lambda_{n-1}, \lambda_0\}$ and we write $\widecheck{\left[\lambda_n\right]^c}_i:=\{\lambda_1,\dots, \lambda_{i-1},\lambda_i,\lambda_i, \lambda_{i+1},\dots, \lambda_{n-1},\lambda_0\}$. There is just one relation to be considered in $\ddd_\Lambda$: gluing a disc with one boundary marked point to any of the marked points of $\mathpzc{a}$, but that between $\lambda_{n-1}$ and $\lambda_0$, is zero. This is the same as saying that the following composition is zero:
\begin{multline*}
\mathpzc{a}\otimes_{\K} M\left(\alpha\sqcup\widehat{\left[\lambda_n\right]^c}_i\right) \stackrel{(1)}{\longrightarrow}
\mathpzc{a}\otimes_{\K} M\left(\alpha\sqcup\widecheck{\left[\lambda_n\right]^c}_i\right)\stackrel{(2)}{\longrightarrow}\\
\deg^n(\ddd_\Lambda(\alpha\sqcup 1,-)\otimes_{\ddd_{\Lambda,\text{open}}} M(-))
\end{multline*}

The map (1) corresponds to the element $\ddd_{\Lambda,\text{open}}\left(\alpha\sqcup\widehat{\left[\lambda_n\right]^c}_i, \widecheck{\left[\lambda_n\right]^c}_i\right)$, obtained from the tensor product of $\id_{\alpha}$ and $\id_{\widehat{\left[\lambda_n\right]^c}_i}$ with the map corresponding to the disc with a single marked point. Observe that the map (2) corresponds to gluing the disc with one marked point, due to the fact that $\ddd_\Lambda(\alpha\sqcup 1,-)\otimes_{\ddd_{\Lambda,\text{open}}} M(-)$ is spanned by $\mathpzc{a}\otimes_{\K}M(\alpha\sqcup[\lambda_n]^c)$. 

\begin{figure}[h]
\centering
\scalebox{.89}{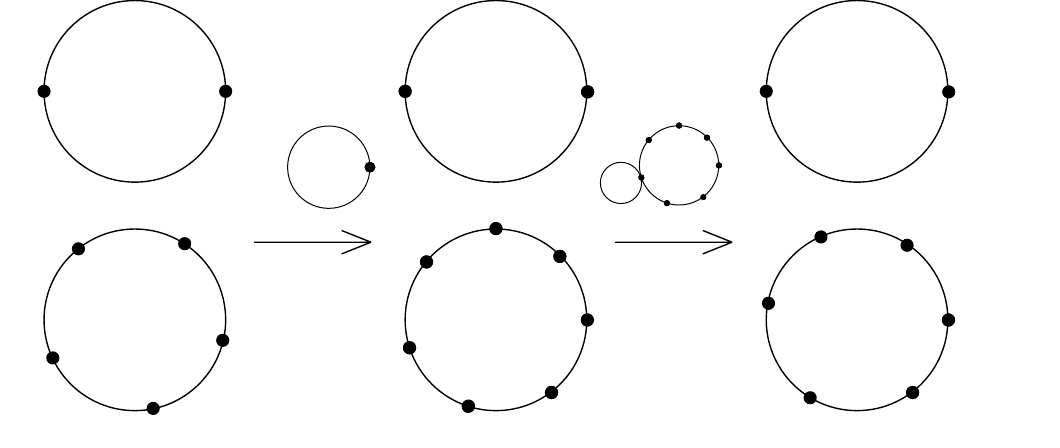}
\end{figure}
\newpage
As (1) is always injective (because we can find an splitting coming from the disc with one marked point), taking quotient is an exact operation and hence 
$$\ddd_\Lambda(\alpha\sqcup 1,-)\otimes_{\ddd_{\Lambda,\text{open}}} M(-)$$ 
is an exact functor. The same argument applies for any $C\in \mathbb{Z}$ by observing that, as each annulus $A(\lambda_0,\dots,\lambda_{n-1})$ has a closed boundary component, each integer $C$ corresponds to an annulus, which \textit{contributes} with an element of the form $\left[[\lambda]^c_n\right]$. Therefore the first part of the Lemma is proved. 
\\\\
The second part is proved similarly. Let $N:=\ddd_{\Lambda}\otimes_{\ddd_{\Lambda,\text{open}}} M$ and, for simplicity, assume $\beta=\alpha\sqcup 1$. In order to show that $N(\beta)\otimes N(\beta')\to N(\beta\sqcup \beta')$ is a quasi-isomorphism, we take the filtration induced by $\ddd_\Lambda$ and check the result for the associated graded complexes. Roughly speaking: for $[\lambda_n],\, [\lambda'_m]\in\ob(\ddd_{\Lambda,\text{open}})$, $N(\beta)$ is spanned by $\mathpzc{a}\otimes_{\K} M(\alpha\sqcup[\lambda_n]^c)$ and $N(\beta')$ is spanned by $\mathpzc{a'}\otimes_{\K} M(\alpha'\sqcup[\lambda'_m]^c)$. Therefore, the tensor product $N(\beta)\otimes N(\beta')$ is spanned by 
$$\mathpzc{a}\otimes_{\K}\mathpzc{a'}\otimes_{\K}M(\alpha\sqcup[\lambda_n]^c)\otimes_{\K} M(\alpha'\sqcup[\lambda'_m]^c)$$
which is quasi-isomorphic to $\mathpzc{a}\otimes_{\K}\mathpzc{a'}\otimes_{\K}M\left((\alpha\sqcup[\lambda_n]^c)\sqcup (\alpha'\sqcup[\lambda'_m]^c)\right)$
as $M$ is h-split. These elements span $\ddd_\Lambda(\beta\sqcup\beta',-)\otimes_{\ddd_{\Lambda,\text{open}}}M(-)$. We conclude by observing that the same proof works for $\ddd^+_{\Lambda,\text{open}}$ and $\ddd^+_{\Lambda}$.
\end{proof}




\section{Calabi-Yau involutive categories and KTCFTs}

\subsection{Calabi-Yau involutive $A_\infty$-categories} 

\begin{remark}
Henceforth, we will assume that the field $\K$ has characteristic zero and that it is equipped with an involution given by the identity map.
\end{remark}



An \index{Involutive $A_\infty$-category} \textit{involutive $A_\infty$-category} $\mathpzc{C}$ consists of:
\begin{enumerate}
  \item A class of objects $\ob(\mathpzc{C})$;
  \item for all $c_1,c_2\in\ob(\mathpzc{C})$, a $\mathbb{Z}$-graded abelian group of morphisms $\homo_{\mathpzc{C}}(c_1,c_2)$;
    \item a functor $\star:\mathpzc{C}^\text{op}\to\mathpzc{C}$ which is the identity on objects and satisfying, for morphisms $f,g\in\homo_\mathpzc{C}$: $(f\circ g)^\star=g^\star\circ f^\star$, $(f^\star)^\star=f$ and $\id^\star=\id$;
  \item for all $n\geq 1$, composition maps 
  $$m_n:\homo_{\mathpzc{C}}(c_1,c_2)\otimes\cdots\otimes \homo_{\mathpzc{C}}(c_n,c_{n+1})\to \homo_{\mathpzc{C}}(c_1,c_{n+1})$$ of degree $n-2$ satisfying $\forall n\geq 1$:
  \begin{equation} \label{infinity2}
\sum_{i+j+l=n}(-1)^{i+jl}m_{i+1+l}\circ(\id^{\otimes i}\otimes m_j\otimes \id^{\otimes l})=0; 
\end{equation}
\item Given morphisms $f_1,\dots, f_n\in\homo_\mathpzc{C}$, the maps $m_n$ are required to satisfy the following identity:
$$(m_n(f_1\otimes\cdots\otimes f_{n}))^\star=m_n(f_n^\star\otimes\cdots\otimes f_1^\star).$$
\end{enumerate}

A \index{CY involutive $A_\infty$-category} \textit{Calabi-Yau involutive $A_\infty$-category} is an involutive $A_\infty$-category $\eee$ endowed with a trace map $\tr:\homo_{\mathpzc{C}}(e_1,e_1)\to \K$, satisfying $(\tr(f))^\star=\tr(f^\star)=\tr(f)$, and a symmetric and non-degenerate on homology pairing of chain complexes 
\[
\left.\begin{array}{cccc}\langle - , -\rangle_{e_1,e_2}: & \homo_{\mathpzc{C}}(e_1,e_2)\otimes \homo_{\mathpzc{C}}(e_2,e_1) & \to & \K \\ & f\otimes g & \mapsto &\tr(g\circ f)\end{array}\right.
\]
for each $e_1,e_2\in\ob(\eee)$. This pairing is required to satisfy, for maps $f\in\homo_{\mathpzc{C}}(e_1,e_2)$ and $g\in\homo_{\mathpzc{C}}(e_2,e_1)$:
\begin{equation}\label{twistedinvolutive}
\langle f, g \rangle_{e_1,e_2}= \langle g^\star, f^\star \rangle_{e_1,e_2}
\end{equation}
and the following identity:
$$
\langle m_{n-1}(e_0\otimes \cdots \otimes e_{n-2}), e_{n-1}\rangle=
(-1)^{(n+1)+|e_0|\sum_{i=1}^{n-1}|e_i|}\langle m_{n-1}(e_1\otimes \cdots \otimes e_{n-1}), e_0 \rangle.
$$

Given two Calabi-Yau involutive $A_\infty$-categories $(\aaa,\star)$ and $(\bbb,\dagger)$, a functor $\FF:(\aaa,\star)\to(\bbb,\dag)$ is a functor of the underlying $A_\infty$-categories (Section 5.1.2 \cite{Lefevre03}) such that $\FF\circ\star=\dag\circ\FF$. Calabi-Yau $A_\infty$-categories and functors betweem them form a category.

\subsection{Open KTCFTs and Calabi-Yau involutive $A_\infty$-categories} 

The main result of this chapter states that the category of open KTCFTs is quasi-isomorphic to the category of Calabi-Yau $A_\infty$-categories endowed with involution. We will get products $m_n$ from the generators of the categories defined in the previous sections and, by using the twisted disc $D^\tau(\lambda_0,\lambda_1)$, we will equip all our $A_\infty$-categories with an involution.
\\\\
Let $\FF:\ddd^+_{\Lambda, \text{open}}\to \comp_{\K}$ be a split symmetric monoidal functor. For each $O\in \mathbb{N}$ and D-brane labelling given by $\{s(i), t(i)\}$, with $0\leq i\leq O-1$, the following isomorphism holds:
\begin{equation} \label{isom}
\FF([O],s,t)\cong \bigotimes_{i=0}^{O-1}\FF(\{s(i), t(i)\}).
\end{equation}

Let the pair $\{s(i), t(i)\}$ correspond to the pair of D-branes $\{\lambda_i,\lambda_{i+1}\}$. We can define a category $\bbb$ with $\ob(\bbb):=\Lambda$ and $\homo_{\bbb}(\lambda_i,\lambda_{i+1}):=\FF(\{s(i), t(i)\})$. Composition of morphisms in $\bbb$ makes sense as $\FF$ is split. 
Observe that we are just associating each open boundary component (i.e. each interval and later on each open marked point) to the space $\homo_{\bbb}(\lambda_i, \lambda_{i+1})$. 
\\\\
A \index{CY UEI $A_\infty$-category} \textit{Calabi-Yau unital extended involutive $A_\infty$-category} with objects in $\Lambda$ is a h-split symmetric monoidal functor $\FF:\ddd_{\Lambda, \text{open}}\to\comp_{\K}$. By considering $\ddd^+_{\Lambda,\text{open}}$ instead of $\ddd_{\Lambda,\text{open}}$ we get the concept of \index{UEI $A_\infty$-category}\textit{unital extended involutive $A_\infty$-category}. If we consider split functors instead of h-split functors, we obtain the concept of \index{Unital CY $A_\infty$-category}\textit{unital Calabi-Yau involutive $A_\infty$-category} and the concept of \index{Unital involutive $A_\infty$-category}\textit{unital involutive $A_\infty$-category} respectively. These definitions make sense due to the following Lemmata:

\begin{lemma} \label{invinfinity}
A split symmetric monoidal functor $\FF:\ddd^+_{\Lambda, \text{open}}\to \comp_{\K}$ is the same as a unital involutive $A_\infty$-category $\bbb$ with set of objects $\Lambda$.
\end{lemma}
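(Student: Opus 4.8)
The plan is to read off the $A_\infty$-structure from the generators-and-relations presentation of $\ddd^+_{\Lambda,\text{open}}$ in Theorem \ref{KleinMess}, and then to reverse the construction. Given a split symmetric monoidal functor $\FF$, the category $\bbb$ with $\ob(\bbb)=\Lambda$ and $\homo_\bbb(\lambda,\mu)=\FF(\{\lambda,\mu\})$ has already been introduced above, and splitness of $\FF$ supplies the isomorphisms (\ref{isom}), so $\FF$ is determined by its values on the objects $[\lambda_n]$ and on the generating morphisms. First I would set $m_n:=\FF\big(D^+(\lambda_0,\dots,\lambda_n)\big)$, where $D^+(\lambda_0,\dots,\lambda_n)$ is the disc with $n+1$ marked points, $n$ of them incoming; since this disc corresponds to a cell of dimension $n-2$ in $\gcal_{\Lambda,\text{open}}$, the resulting map $\homo_\bbb(\lambda_0,\lambda_1)\otimes\cdots\otimes\homo_\bbb(\lambda_{n-1},\lambda_n)\to\homo_\bbb(\lambda_0,\lambda_n)$ has degree $n-2$, as demanded. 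I would take $\star:=\FF\big(D^\tau(\lambda_i,\lambda_{i+1})\big)\colon\homo_\bbb(\lambda_i,\lambda_{i+1})\to\homo_\bbb(\lambda_{i+1},\lambda_i)$, extended to composites by $(f\circ g)^\star:=g^\star\circ f^\star$, and let the unit $1_c\in\homo_\bbb(c,c)$ be the image under $\FF$ of the one-marked-point disc $D^+(c)$.

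Next I would verify the axioms by pushing the differential and the relations through $\FF$. Since $\FF$ is a differential graded functor, applying it to the differential of $\ddd^+_{\Lambda,\text{open}}$ recalled in Section~\ref{differential}, $d\big(D^+(\lambda_0,\dots,\lambda_n)\big)=\sum\pm D^+*D^+$, produces exactly the identities (\ref{infinity2}). Relation (1) of Theorem \ref{KleinMess} gives $f^{\star\star}=f$ and that $\star$ is an isomorphism, and, through the interaction of $D^\tau$ with $m_2$, also $\id^\star=\id$; relation (3) gives $\big(m_n(f_1\otimes\cdots\otimes f_n)\big)^\star=m_n(f_n^\star\otimes\cdots\otimes f_1^\star)$; relation (2), which concerns the discs with three marked points, gives $m_2(f\otimes 1_c)=f$ and $m_2(1_c\otimes g)=g$; and relation (4) gives $m_n(\cdots\otimes 1_c\otimes\cdots)=0$ for $n\geq 3$. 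Hence $\bbb$ is a unital involutive $A_\infty$-category.

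Conversely, starting from a unital involutive $A_\infty$-category $\bbb$ with $\ob(\bbb)=\Lambda$, I would define $\FF$ on objects by (\ref{isom}) --- so that $\FF$ is automatically split --- and on generating morphisms by $\FF(D^+(\lambda_0,\dots,\lambda_n)):=m_n$, $\FF(D^\tau):=\star$, $\FF(D^+(c)):=1_c$. Because Theorem \ref{KleinMess} presents $\ddd^+_{\Lambda,\text{open}}$ as freely generated, as a DGSM category, by these discs modulo relations (1)--(4), this assignment extends uniquely to a DGSM functor provided it is compatible with the differential --- which is precisely (\ref{infinity2}) --- and respects relations (1)--(4) --- which are the unit and involution axioms read backwards. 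The two constructions are then visibly mutually inverse, and the correspondence respects morphisms, a morphism $\FF\to\FF'$ of split functors restricting on the $\homo_\bbb$'s to an $A_\infty$-functor commuting with $\star$, and conversely.

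The hard part will be the bookkeeping of signs and gradings, exactly as in Costello's treatment of the oriented case \cite{Costello07}. The signs in the differential of $\ddd^+_{\Lambda,\text{open}}$ depend on a choice of orientation of the cells of $\gcal_{\Lambda,\text{open}}$ (see the remark after the differential formula in Section~\ref{differential}), and one must fix these orientations so that they reproduce the Koszul signs $(-1)^{i+jl}$ in (\ref{infinity2}) and the signs in the involution-compatibility identity, while tracking the degree shift between the cell grading and the internal grading of the $\homo_\bbb$ so that $m_n$ genuinely lands in degree $n-2$. The only genuinely new ingredient beyond the oriented argument is the twisted disc $D^\tau$ together with relations (1) and (3), whose sole effect is to produce the involution $\star$ and its compatibility with the operations $m_n$; I would verify these two relations by hand, drawing the surfaces involved with the marked points labelled appropriately, in the spirit of the sketch proof of Theorem \ref{KleinMess}.
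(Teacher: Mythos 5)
Your proposal is correct and follows essentially the same route as the paper: reading the products $m_n$, the involution $\star$, the unit and the $A_\infty$/unitality/involutivity identities off the generators, differential and relations of $\ddd^+_{\Lambda,\text{open}}$ via the splitness isomorphism (\ref{isom}). You spell out the converse direction and the relation-by-relation verification in more detail than the paper, which only sketches the dictionary, but the underlying argument is the same.
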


\begin{proof}
The proof follows from the isomorphism (\ref{isom}) above. Let us observe that:
\begin{enumerate}
\item The twisted disc $D^\tau(\lambda_i,\lambda_{i+1})$ yields the involution 
$$\star:\homo_{\bbb}(\lambda_i,\lambda_{i+1})\to\homo_{\bbb}(\lambda_{i+1},\lambda_i);$$
\item the discs $D^+(\lambda_0,\dots, \lambda_{n-1})$ yield the products
$$m_{n-1}:\homo_{\bbb}(\lambda_0,\lambda_1)\otimes\dots\otimes\homo_{\bbb}(\lambda_{n-2},\lambda_{n-1})\to\homo_{\bbb}(\lambda_0,\lambda_{n-1});$$
\item the differential $d$ gives the $A_\infty$-relations between the $m_n$; 
\item for $n=2,\, D^+(\lambda_0,\lambda_1)$ yields the identity $\homo_{\bbb}(\lambda_0,\lambda_1)\to\homo_{\bbb}(\lambda_0,\lambda_1)$;
\item for $n=1,\, D^+(\lambda)$ yields the unit $\K\to\homo_{\bbb}(\lambda,\lambda)$.
\end{enumerate}
Observe that relation \ref{kleinthree} in Corollary \ref{KleinMess} proves that the products $m_n$ preserve the involution. \qedhere
\end{proof}

\begin{lemma} \label{invcalabi}
A split symmetric monoidal functor $\FF:\ddd_{\Lambda, \text{open}}\to \comp_{\K}$ is the same as a unital Calabi-Yau involutive $A_\infty$-category $\bbb$ with set of objects $\Lambda$.
\end{lemma}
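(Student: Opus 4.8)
The plan is to deduce this from \lemref{invinfinity} together with the presentation of $\ddd_{\Lambda,\text{open}}$ over $\ddd^+_{\Lambda,\text{open}}$ supplied by \thmref{theorem_Mess}; the argument is the unoriented analogue of the corresponding step in \cite{Costello07}. First I would restrict the given split symmetric monoidal functor $\FF\colon\ddd_{\Lambda,\text{open}}\to\comp_{\K}$ to the subcategory $\ddd^+_{\Lambda,\text{open}}$. By \lemref{invinfinity} this restriction is exactly the datum of a unital involutive $A_\infty$-category $\bbb$ with $\ob(\bbb)=\Lambda$ and $\homo_{\bbb}(\lambda_i,\lambda_{i+1})=\FF(\{s(i),t(i)\})$, equipped with the products $m_n$, the unit, and the involution $\star$ coming respectively from the discs $D^+(\lambda_0,\dots,\lambda_{n-1})$, $D^+(\lambda)$ and the twisted discs $D^\tau(\lambda_i,\lambda_{i+1})$. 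It therefore remains to recognise the extra structure encoded by the two additional generators $D_{in}(\lambda_0,\lambda_1)$ and $D_{out}(\lambda_0,\lambda_1)$ of \thmref{theorem_Mess}, and to match the two relations imposed there with the Calabi-Yau axioms.

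Since $\FF$ is split and $D_{in}(\lambda_0,\lambda_1)$ has two incoming and no outgoing marked points, $\FF(D_{in}(\lambda_0,\lambda_1))$ is a chain map $\langle-,-\rangle_{\lambda_0,\lambda_1}\colon\homo_{\bbb}(\lambda_0,\lambda_1)\otimes\homo_{\bbb}(\lambda_1,\lambda_0)\to\K$, while dually $\FF(D_{out}(\lambda_0,\lambda_1))$ is a copairing $\K\to\homo_{\bbb}(\lambda_0,\lambda_1)\otimes\homo_{\bbb}(\lambda_1,\lambda_0)$. The first relation of \thmref{theorem_Mess}, that an appropriate gluing of $D_{out}$ to $D_{in}$ is the identity, says precisely that the copairing is a two-sided inverse to the pairing, so $\langle-,-\rangle$ is non-degenerate (in particular on homology). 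The second relation, cyclic symmetry of the all-incoming disc $D(\lambda_0,\dots,\lambda_{n-1})$ under the isomorphism $[\lambda_n]\cong[\lambda_n]^c$, gives for $n=2$ the symmetry of the pairing and, writing $D(\lambda_0,\dots,\lambda_{n-1})$ as $D^+(\lambda_0,\dots,\lambda_{n-1})$ composed with $D_{in}$, gives for general $n$ the identity $\langle m_{n-1}(e_0\otimes\cdots\otimes e_{n-2}),e_{n-1}\rangle=\pm\langle m_{n-1}(e_1\otimes\cdots\otimes e_{n-1}),e_0\rangle$. Setting $\tr(f):=\langle 1_e,f\rangle_{e,e}$ then produces the trace map, with $\langle f,g\rangle=\pm\tr(g\circ f)$ following from the $n=3$ case and unitality; and the involutive compatibilities $\tr(f^\star)=\tr(f)$ and $\langle f,g\rangle=\langle g^\star,f^\star\rangle$ of \eqref{twistedinvolutive} follow by gluing twisted discs onto the incoming faces of $D_{in}$ (and of the trace disc) and reducing, via Proposition~\ref{KleinMess} and $D^\tau\circ D^\tau=\id$, to orientation-reversal identities that already hold in $\ddd_{\Lambda,\text{open}}$.

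For the converse, given a unital Calabi-Yau involutive $A_\infty$-category $\bbb$ with $\ob(\bbb)=\Lambda$, I would define $\FF$ on $\ddd^+_{\Lambda,\text{open}}$ by \lemref{invinfinity}, set $\FF(D_{in}(\lambda_0,\lambda_1))$ equal to the given pairing and $\FF(D_{out}(\lambda_0,\lambda_1))$ equal to the copairing determined by non-degeneracy, and verify that the two relations of \thmref{theorem_Mess} hold --- which is exactly the symmetry and cyclic-symmetry clauses of the Calabi-Yau axiom --- so that by freeness $\FF$ extends uniquely to a split symmetric monoidal functor on $\ddd_{\Lambda,\text{open}}$. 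One then checks that these two assignments $\FF\leftrightarrow\bbb$ are mutually inverse and natural, which yields the asserted identification (an isomorphism on objects).

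I expect the main obstacle to be twofold. First, constructing the copairing --- hence the inverse functor --- genuinely uses non-degeneracy of $\langle-,-\rangle$ at the chain level, not merely on homology, so one must be careful about which non-degeneracy hypothesis is in force in the split versus the h-split case, possibly passing to a quasi-isomorphic model of $\bbb$ on which a strict copairing exists. Second, the sign and orientation bookkeeping in the Calabi-Yau/cyclic identity and in the involutive compatibilities is delicate: verifying that reversing the orientation at a marked point of $D_{in}$ by means of $D^\tau$ reproduces exactly the $(-)^\star$-twist of \eqref{twistedinvolutive} with the correct Koszul sign requires tracking the M\"obius-graph data carefully, and this is where the argument is most technical.
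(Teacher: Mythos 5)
Your argument is correct and follows essentially the same route as the paper: restrict to $\ddd^+_{\Lambda,\text{open}}$ and invoke \lemref{invinfinity}, read the extra generators $D_{in}$ and $D_{out}$ of \thmref{theorem_Mess} as the pairing and its inverse, match the gluing and cyclic-symmetry relations with the Calabi-Yau axioms, and obtain the involutive compatibility $\langle f,g\rangle=\langle g^\star,f^\star\rangle$ from relation \ref{kleinthree} of Proposition \ref{KleinMess} via the twisted discs. Your write-up is in fact more detailed than the paper's brief sketch (notably the explicit converse construction and the caution about chain-level non-degeneracy and signs), but it is the same proof.
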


\begin{proof}
The proof follows the same arguments of Lemma \ref{invinfinity} but now we have two more generators (see Theorem \ref{theorem_Mess}): the discs with two incoming and two outgoing marked points, which yield the map
$\homo_{\bbb}(\lambda_0,\lambda_1)\otimes\homo_{\bbb}(\lambda_1,\lambda_0)\to \K$
and its inverse. The extra relations on $\ddd_{\Lambda, \text{open}}$ correspond to the cyclic symmetry condition. As in the previous result, the anti-analytic involution on the Riemann surfaces is transferred to the Calabi-Yau involutive $A_\infty$-category through a twisted disc. Observe that we can deduce the identity $\langle f,g \rangle=\langle g^\star, f^\star\rangle$ from relation \ref{kleinthree} in Corollary \ref{KleinMess}. 
\end{proof}

The following result is clear from the above results, and almost proves the first part of our main theorem:

\begin{proposition}
The category of Calabi-Yau unital extended involutive $A_\infty$-categories with set of objects $\Lambda$ is quasi-equivalent to the category of open KTCFTs.
\end{proposition}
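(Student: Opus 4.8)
The plan is to transport functors along the quasi-isomorphism supplied by \propref{quasiisos}. By definition, combined with \lemref{invcalabi}, a Calabi-Yau unital extended involutive $A_\infty$-category with objects $\Lambda$ is nothing but an h-split symmetric monoidal functor $\ddd_{\Lambda,\text{open}}\to\comp_\K$, and likewise an open KTCFT is an h-split symmetric monoidal functor $\oo_\Lambda\to\comp_\K$; in both cases a morphism is a monoidal natural transformation and the relevant notion of quasi-isomorphism is the objectwise one. Since $\ob(\ddd_{\Lambda,\text{open}})=\ob(\oo_\Lambda)$ --- both consist of the open objects $([O],\emptyset,s,t)$ --- the statement reduces to producing a quasi-equivalence between the categories of h-split symmetric monoidal functors out of $\ddd_{\Lambda,\text{open}}$ and out of $\oo_\Lambda$.

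First I would fix a quasi-isomorphism $q\colon\ddd_{\Lambda,\text{open}}\to\oo_\Lambda$ of DGSM categories, which exists by \propref{quasiisos} (strictly speaking a zig-zag of quasi-isomorphisms, which changes nothing in what follows) and which respects the involutions. Pullback along $q$ gives a functor $q^\star$ sending an h-split symmetric monoidal functor $N\colon\oo_\Lambda\to\comp_\K$ to $N\circ q$; this is strict monoidal, and since $q$ is a homology isomorphism $N\circ q$ is again h-split, so $q^\star$ carries open KTCFTs to Calabi-Yau UEI $A_\infty$-categories and monoidal natural transformations to monoidal natural transformations. In the other direction I would use the derived push-forward $\mathbb{L}q_\star=\oo_\Lambda\otimes^{\mathbb{L}}_{\ddd_{\Lambda,\text{open}}}(-)$ of (\ref{derivedtensor}). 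By the analogue of Lemma~4.4.3 of \cite{Costello07} recorded above, $q^\star$ and $\mathbb{L}q_\star$ are inverse quasi-equivalences between $\ddd_{\Lambda,\text{open}}\mhyphen\module$ and $\oo_\Lambda\mhyphen\module$, with natural quasi-isomorphisms $q^\star\mathbb{L}q_\star\simeq\mathbb{1}$ and $\mathbb{L}q_\star q^\star\simeq\mathbb{1}$; and by \propref{costello444} (the analogue of Lemma~4.4.4) these functors intertwine the derived tensor products and hence respect the symmetric monoidal structure. The same transport and the same natural quasi-isomorphisms apply verbatim to h-split functors rather than just to split modules, once one knows that $q^\star$ and $\mathbb{L}q_\star$ preserve h-splitness.

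That preservation is the main obstacle. For $q^\star$ it is immediate, since $q$ is a homology isomorphism. For $\mathbb{L}q_\star$ I would argue as in the proof of \lemref{Costello631}: write $\mathbb{L}q_\star M=\oo_\Lambda\otimes_{\ddd_{\Lambda,\text{open}}}\ba_{\ddd_{\Lambda,\text{open}}}(M)$, use that $\ba_{\ddd_{\Lambda,\text{open}}}(M)$ is flat (Lemma~4.3.4 of \cite{Costello07}) and h-split, filter the bar resolution by the number of bar factors, and identify the associated graded pieces with tensor products computing the comparison maps $\h(F_1)$ and $\h(F_0)$ of $\mathbb{L}q_\star M$; these maps are isomorphisms because $M$ is h-split and $q$ is a homology isomorphism, whence $\mathbb{L}q_\star M$ is h-split. (Alternatively: $q^\star$ detects h-splitness, and $q^\star\mathbb{L}q_\star M\simeq M$ is h-split, so $\mathbb{L}q_\star M$ is too, h-splitness being invariant under objectwise quasi-isomorphism of monoidal functors over a field.)

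Putting the pieces together, $q^\star$ and $\mathbb{L}q_\star$ restrict to inverse quasi-equivalences between the h-split subcategories, that is, between open KTCFTs and Calabi-Yau unital extended involutive $A_\infty$-categories, which is the asserted statement. The only genuinely non-formal ingredient is the h-split preservation under $\mathbb{L}q_\star$, resting on the flatness of the bar construction; the identification of the two functor categories via \lemref{invcalabi} and the symmetric-monoidal bookkeeping are routine.
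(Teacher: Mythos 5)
Your proposal is correct and takes essentially the same route as the paper: the paper's own proof just recalls that an open KTCFT is an h-split symmetric monoidal functor $\oo_\Lambda\to\comp_{\K}$ and invokes \lemref{invcalabi} together with the quasi-isomorphism $\ddd_{\Lambda,\text{open}}\simeq\oo_\Lambda$ of \propref{quasiisos}. Your explicit transport along $q^\star$ and $\mathbb{L}q_\star$, and the check that h-splitness is preserved, simply spell out the details the paper leaves implicit.
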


\begin{proof}
Let us recall that an open KTCFT is an h-split monoidal functor 
$$\KK:\oo_{\Lambda}\to \comp_{\K}.$$
The result follows from Lemma \ref{invcalabi} and the quasi-isomorphism between $\oo_\Lambda$ and $\ddd_{\Lambda, \text{open}}$, in Proposition \ref{quasiisos}.
\end{proof}
\begin{proposition}\label{quasi-equivalence}
The following categories, each one with set of objects $\Lambda$, are quasi-equivalent:
\begin{enumerate}
\item The category of unital extended involutive $A_\infty$-categories;
\item the category of unital involutive $A_\infty$-categories and
\item the category of unital involutive DG categories.
\end{enumerate}
\end{proposition}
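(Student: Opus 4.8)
The plan is to establish the chain of quasi-equivalences by exhibiting explicit functors between adjacent items and checking that their composites are quasi-isomorphic to the identity, following the strategy already used in Lemma~\ref{invinfinity} and in Costello's treatment of the oriented case. The three notions differ only in how much ``slack'' is allowed in the $A_\infty$-structure: a unital involutive DG category is precisely a unital involutive $A_\infty$-category for which $m_n=0$ whenever $n\neq 1,2$; a unital involutive $A_\infty$-category is, by Lemma~\ref{invinfinity}, a split symmetric monoidal functor $\FF:\ddd^+_{\Lambda,\text{open}}\to\comp_{\K}$; and a unital extended involutive $A_\infty$-category replaces $\ddd^+_{\Lambda,\text{open}}$ with the larger bimodule-style category $\ddd^+_\Lambda$ of Theorem~\ref{annuli_Mess}, i.e.\ allows annuli as extra operations. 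So the statement amounts to saying that passing between these three presentations does not change the homotopy type.

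First I would handle the equivalence between (2) and (3). The inclusion of DG categories into $A_\infty$-categories gives a functor from (3) to (2). For the reverse direction one uses the standard homotopy-transfer / minimal-model machinery for $A_\infty$-structures (Section 5.1.2 of \cite{Lefevre03}): any unital involutive $A_\infty$-category admits, over a field of characteristic zero, a quasi-isomorphic DG model, and one checks that the transferred structure can be chosen compatibly with the involution $\star$, using that $\K$ carries the trivial involution and that the transfer formulas are natural. The point to verify is that the homotopies realizing $\FF\circ\GG\simeq\mathbb{1}$ and $\GG\circ\FF\simeq\mathbb{1}$ can be taken $\star$-equivariant; this follows because the involution is a strict DG functor $\mathpzc{C}^{\op}\to\mathpzc{C}$ and the standard contractions can be symmetrized (averaging is legitimate in characteristic zero). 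This is the analogue of the fact that the category of $A_\infty$-categories and the category of DG categories are quasi-equivalent, now carried out in the involutive world.

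Next I would treat (1) versus (2). The forgetful direction is the restriction functor induced by the inclusion $\ddd^+_{\Lambda,\text{open}}\hookrightarrow\ddd^+_\Lambda$ (sending a Calabi-Yau-less extended structure to its underlying $A_\infty$-part). For the reverse direction one uses that $\ddd^+_\Lambda$ is freely generated over $\ddd^+_{\Lambda,\text{open}}$ as a bimodule by the identities, the twisted discs, and the annuli $A(\lambda_0,\dots,\lambda_{n-1})$, subject only to the relations in Theorem~\ref{annuli_Mess}; by the flatness and h-splitting statements of Lemma~\ref{Costello631} (which are stated precisely for $\ddd^+_{\Lambda,\text{open}}$ and $\ddd^+_\Lambda$), extending a module along this inclusion and then restricting back is quasi-isomorphic to the identity. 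Concretely one sends a split functor $\FF$ on $\ddd^+_{\Lambda,\text{open}}$ to $\ddd^+_\Lambda\otimes_{\ddd^+_{\Lambda,\text{open}}}\FF$, invokes $\ddd^+_{\Lambda,\text{open}}$-flatness so this is exact, and uses the bar-resolution picture to identify the round trip with $\mathbb{1}$ up to quasi-isomorphism, exactly as in Lemma~4.4.1 \cite{Costello07}. Finally one composes the two quasi-equivalences; since quasi-equivalences compose (the relation $\simeq$ is transitive by definition), the three categories are pairwise quasi-equivalent.

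The main obstacle I expect is the $\star$-equivariance in the homotopy transfer step (2)$\Leftrightarrow$(3): one must be careful that the contracting homotopy and the tree-sum formulas producing the minimal $m_n$ respect the identity $(m_n(f_1\otimes\cdots\otimes f_n))^\star=m_n(f_n^\star\otimes\cdots\otimes f_1^\star)$, including signs and the reversal of tensor-factor order, and that unitality is preserved. This is not automatic from the non-involutive statement, but it is forced by choosing the homotopy data to be $\star$-symmetric from the outset and propagating the symmetry through the inductive construction; the characteristic-zero hypothesis and the triviality of the involution on $\K$ are what make this symmetrization possible. Everything else is the routine bookkeeping that is already implicit in Lemmata~\ref{invinfinity}, \ref{invcalabi} and \ref{Costello631}.
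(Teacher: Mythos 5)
There is a genuine gap, and it comes from a misreading of what ``extended'' means in this paper. Here a \emph{unital extended involutive $A_\infty$-category} is by definition an \emph{h-split} symmetric monoidal functor $\ddd^+_{\Lambda,\text{open}}\to\comp_{\K}$, while a \emph{unital involutive $A_\infty$-category} is a \emph{split} such functor on the very same category $\ddd^+_{\Lambda,\text{open}}$; the annuli and the bimodule $\ddd^+_\Lambda$ of Theorem~\ref{annuli_Mess} play no role in this proposition (they belong to the open-closed extension, i.e.\ part (2) of Theorem~\ref{thetheorem}). Consequently your argument for $(1)\Leftrightarrow(2)$ --- induction/restriction along an inclusion $\ddd^+_{\Lambda,\text{open}}\hookrightarrow\ddd^+_\Lambda$, flatness from Lemma~\ref{Costello631}, and the bar-resolution comparison as in Lemma 4.4.1 of \cite{Costello07} --- proves a different statement and does not address the actual content of $(1)\Leftrightarrow(2)$, which is a strictification of the monoidal structure: given an h-split $\FF$, one defines the split functor $F_{\FF}([O],s,t):=\bigotimes_{i=0}^{O-1}\FF(\{s(i),t(i)\})$ and uses the isomorphisms $\bigotimes_i\ddd^+_{\Lambda,\text{open}}(\alpha_i,\{\lambda_i,\lambda'_i\})\cong\ddd^+_{\Lambda,\text{open}}(\sqcup\alpha_i,\sqcup\{\lambda_i,\lambda'_i\})$ together with the h-split hypothesis to get comparison maps $F_{\FF}(\alpha)\to\FF(\alpha)$ that are quasi-isomorphisms. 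Without this step your chain of equivalences never connects item (1) to the other two.

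On the $(2)\Leftrightarrow(3)$ leg you also take a heavier route than necessary. The paper does not invoke homotopy transfer or rectification of $A_\infty$-structures: it uses that each $\ddd^+_{\Lambda,\text{open}}(\alpha,\beta)$ is a complex of chains on a contractible moduli space of discs, hence quasi-isomorphic to its $\h_0$, and that split functors out of $\h_0\left(\ddd^+_{\Lambda,\text{open}}\right)$ are exactly unital involutive DG categories (associativity coming from one-dimensionality of the relevant $\h_0$); the quasi-equivalence is then just composition with the quasi-isomorphism $\ddd^+_{\Lambda,\text{open}}\to\h_0\left(\ddd^+_{\Lambda,\text{open}}\right)$, and the involution is carried along automatically because the twisted disc is one of the generators. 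Your transfer-based argument, besides needing the $\star$-equivariant contraction you flag (and note that minimal-model transfer by itself produces $A_\infty$-models, not DG models, so you would additionally need a rectification step), is not wrong in spirit, but it is not what is needed here and does not repair the missing $(1)\Leftrightarrow(2)$ argument.
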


\begin{proof}
Let $\alpha,\beta\in\ob\left(\ddd^+_{\Lambda,\text{open}}\right)$. The space $\ddd^+_{\Lambda,\text{open}}(\alpha,\beta)$ is contractible as it is given by the chains on the moduli spaces of discs with $\alpha$ incoming marked points and $\beta$ outgoing marked points, hence for $n\neq 0$ we have: 
$$\h_n\left(\ddd^+_{\Lambda,\text{open}}(\alpha,\beta)\right)=0.$$ 
This implies that $\ddd^+_{\Lambda,\text{open}}(\alpha,\beta)$ is quasi-isomorphic to its homology and in particular it is quasi-isomorphic to $\h_0\left(\ddd^+_{\Lambda,\text{open}}(\alpha,\beta)\right)$.
\\\\
With the notation introduced at the beginning of this section, giving a split functor 
$$\FF:\h_0\left(\ddd^+_{\Lambda,\text{open}}(\alpha,\beta)\right)\to \comp_{\K}$$ 
is the same as giving a unital DG category $\bbb$ with set of objects $\Lambda$. Observe that 
$$\h_0\left(\ddd^+_{\Lambda,\text{open}}([\lambda_{n+1}],\{\lambda_0, \lambda_n\})\right)$$ 
corresponds to an ``alien pair of pants'' given by a disc with $n$ marked points with the point between $\lambda_n$ and $\lambda_0$ is outgoing. This corresponds to the product
$$\homo_{\bbb}(\lambda_0,\lambda_1)\otimes\dots\otimes\homo_{\bbb}(\lambda_{n-1},\lambda_n)\to\homo_{\bbb}(\lambda_0,\lambda_n),$$
which is associative as $\h_0\left(\ddd^+_{\Lambda,\text{open}}([\lambda_{n+1}],\{\lambda_0, \lambda_n\})\right)$ has dimension one.
\\\\
We show that there is a quasi-equivalence between unital extended involutive $A_\infty$-categories and unital extended involutive DG categories. For that purpose we will use the equivalences obtained in Lemmata \ref{invinfinity} and \ref{invcalabi}. We define a unital extended involutive DG category as a h-split functor of the form $\GG:\h_0\left(\ddd^+_{\Lambda,\text{open}}\right)\to\comp_{\K}$. Due to the quasi-isomorphism 
$$\ddd^+_{\Lambda,\text{open}}\cong \h_0\left(\ddd^+_{\Lambda,\text{open}}\right)$$ 
there is a quasi-equivalence between unital extended involutive $A_\infty$-categories and unital extended involutive DG categories given by
\begin{equation} \label{invisoopen}
\xymatrix{
\ddd^+_{\Lambda, \text{open}} \ar[rr]^-{\cong} \ar[dr] &  & \h_0\left(\ddd^+_{\Lambda, \text{open}}\right) \ar[dl]^-{\GG} \\
 & \comp_{\K} & 
}
\end{equation}

Our next step is to show that there exist a quasi-equivalence between unital extended involutive $A_\infty$-categories and involutive $A_\infty$-categories. It goes as follows:
\\\\
In $\ddd^+_{\Lambda, \text{open}}$ the following isomorphism holds:

\begin{equation} \label{isodopen}
\bigotimes_{i=1}^n\ddd^+_{\Lambda,\text{open}}(\alpha_i, \{\lambda_i, \lambda'_i\}) \to \ddd^+_{\Lambda,\text{open}}(\sqcup_{i=1}^n\alpha_i, \sqcup_{i=1}^n\{\lambda_i, \lambda'_i\}).
\end{equation}

Let us consider a unital extended involutive $A_\infty$-category given by a h-split functor 
$$\FF:\ddd^+_{\Lambda,\text{open}}\to \comp_{\K}$$ 
and define a unital involutive $A_\infty$-category as a split functor $F_{\FF}:\ddd^+_{\Lambda, \text{open}}\to \comp_{\K}$ by stating:
$$F_{\FF}([O],s,t):=\bigotimes_{i=0}^{O-1}\FF(\{s(i), t(i)\}).$$

This definition together with the isomorphism (\ref{isodopen}) secures the existence of maps $F_{\FF}(\alpha)\to \FF(\alpha)$ which, composing with the action of $\ddd^+_{\Lambda, \text{open}}$ yields maps 
$$F_{\FF}(\alpha)\otimes \ddd^+_{\Lambda,\text{open}}(\alpha,\{\lambda_0,\lambda_1\})\to F_{\FF}(\{\lambda_0,\lambda_1\}),$$ indeed: 
\[
\xymatrix{
F_{\FF}(\alpha)\to \FF(\alpha) \ar[d]\\
F_{\FF}(\alpha)\otimes \ddd^+_{\Lambda, \text{open}}(\alpha, \{\lambda_0,\lambda_1\})\to \FF(\alpha)\otimes \ddd^+_{\Lambda, \text{open}}(\alpha, \{\lambda_0,\lambda_1\}) \ar[d] \\
F_{\FF}(\alpha)\otimes \ddd^+_{\Lambda, \text{open}}(\alpha, \{\lambda_0,\lambda_1\})\to \FF(\{\lambda_0,\lambda_1\})=F_{\FF}(\{\lambda_0,\lambda_1\}) 
}
\]

Due to the isomorphisms (\ref{isodopen}) we get that $F_{\FF}$ is monoidal, what leads us to conclude that $F_{\FF}$ is a $\ddd^+_{\Lambda,\text{open}}$-module.
\\\\
This concludes the proof of the equivalence $(1)\Leftrightarrow (2)$ . Similarly we prove that unital extended involutive DG categories are quasi-equivalent to unital involutive DG categories. Let UEI stand for ``unital extended involutive'', the diagram (\ref{invisoopen}) connects the latter quasi-equivalences in the sense below:
\[
\xymatrix{
\text{UEI $A_\infty$-categories} \ar[r]_-{\simeq}^-{(\ref{invisoopen})} \ar[d]_-{\simeq} & \text{UEI DG categories} \ar[d]^-{\simeq} \\
\text{Unital $A_\infty$-categories} \ar[r]_{\simeq} & \text{Unital DG categories} 
}
\]

The quasi-equivalence $(3)\Leftrightarrow (1)$ is straightforward from the diagram above.
\end{proof}

This concludes the proof of part (1) of Theorem \ref{thetheorem}. Observe that, as we have shown that there are quasi-isomorphisms $\ddd_{\Lambda,\text{open}}\cong \oo_{\Lambda}$ and $\ddd_{\Lambda}\cong \oc_{\Lambda}$ (this is Proposition \ref{quasiisos}), by Proposition \ref{costello444} we have, for a left $\ddd_{\Lambda,\text{open}}$-module $\MM_1$ and its associated left $\oo_{\Lambda}$-module $\MM_2$:
$$\underbrace{\oc_{\Lambda}(-,\beta)\otimes_{\oo_{\Lambda}}^{\mathbb{L}}\MM_2}_{\NN(\beta)}\cong \ddd_{\Lambda}\otimes_{\ddd_{\Lambda,\text{open}}}\MM_1.$$
This shows that, if $\MM_2$ is h-split, so it is $\NN(\beta)$. Therefore $\NN$ defines nothing but an open-closed Klein topological conformal field theory, which is the universal open-closed KTCFT associated to $\MM_2$. This proves part (2) of Theorem \ref{thetheorem}. Our next objective is to prove part (3), concluding the proof of Theorem \ref{thetheorem}; this is the purpose of the next section. 


\section{Open-closed KTCFTs and involutive Hochschild homology}

For an involutive DG category $\aaa$, we define its \index{Involutive Hochschild chain complex} \textit{involutive Hochschild chain complex} as
$$C^{\text{inv}}_\bullet(\aaa)=\bigoplus_n\left( \bigoplus_{a_0,\dots,a_{n-1}}\homo_{\aaa}(a_0,a_1)\otimes \dots \otimes \homo_{\aaa}(a_{n-1},a_0) \right)[1-n]\Biggm/\sim,$$
where $\sim$ denotes the relation $f_0^\star\otimes g=f_0\otimes g^\star$, with $g=(f_1,\dots, f_{n-1})$. 
The involution is given by: $(f_0\otimes\dots\otimes f_{n-1})^\star=f_{n-1}^\star\otimes\dots\otimes f_0^\star$.
\\\\
The differential for $C^{\text{inv}}_\bullet(\aaa)$ is given, for maps $f_i\in\homo_{\aaa}(\alpha_i,\alpha_{i+1})$ by:
\begin{align*}
d(f_0\otimes\dots\otimes f_{n-1}) & = \sum_{i=0}^{n-1}(-1)^i(f_0\otimes\dots\otimes df_i\otimes\dots\otimes f_{n-1}) \nonumber \\
 & + \sum_{i=0}^{n-2}(-1)^i(f_0\otimes\dots\otimes (f_{i+1}\circ f_i)\otimes\dots\otimes f_{n-1}) \nonumber \\
 & + (-1)^{n-1} ((f_0\circ f_{n-1})\otimes\dots\otimes f_{n-2}). \nonumber
\end{align*}

\begin{lemma}
The differential $d$ preserves involutions.
\end{lemma}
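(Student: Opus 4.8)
The plan is to prove the identity $d(\xi^\star)=(d\xi)^\star$ on a generating tuple $\xi=f_0\otimes\cdots\otimes f_{n-1}$, with $f_i\in\homo_{\aaa}(a_i,a_{i+1})$ and indices read modulo $n$; since both $d$ and $\star$ are $\K$-linear and are defined by formulas on such tuples, and since the characteristic is zero, it suffices to check the identity on these representatives and to note that it is compatible with the defining relation $\sim$, so that $d$ descends to $C^{\mathrm{inv}}_\bullet(\aaa)$ and commutes there with the induced involution. The only inputs about $\aaa$ that I would use are the axioms of an involutive DG category: $\star$ is a degree-preserving DG functor $\aaa^{\mathrm{op}}\to\aaa$, hence $d(f^\star)=(df)^\star$, $(f\circ g)^\star=g^\star\circ f^\star$, and $\id^\star=\id$.

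First I would break $d$ into three pieces: the internal part $d^{\mathrm{int}}=\sum_{i}(-1)^i\,\id^{\otimes i}\otimes d\otimes\id^{\otimes(n-1-i)}$, the inner face part $d^{\mathrm{f}}=\sum_{i=0}^{n-2}(-1)^i$ composing the consecutive factors $f_i,f_{i+1}$, and the single ``cyclic'' face term $d^{\mathrm{c}}=(-1)^{n-1}$ composing $f_{n-1}$ with $f_0$. Since applying $\star$ to $\xi$ reverses the order of the factors while starring each one, the substitution $d(f_i^\star)=(df_i)^\star$ shows that $\star$ carries the $i$-th summand of $d^{\mathrm{int}}\xi$ to the $(n-1-i)$-th summand of $d^{\mathrm{int}}(\xi^\star)$, and the substitution $(f_{i+1}\circ f_i)^\star=f_i^\star\circ f_{i+1}^\star$ shows that $\star$ carries the term of $d^{\mathrm{f}}\xi$ merging factors $i,i+1$ to the term of $d^{\mathrm{f}}(\xi^\star)$ merging factors $n-2-i,n-1-i$. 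Reindexing ($i\mapsto n-1-i$, resp.\ $i\mapsto n-2-i$) then identifies $\star\circ d^{\mathrm{int}}$ with $d^{\mathrm{int}}\circ\star$ and $\star\circ d^{\mathrm{f}}$ with $d^{\mathrm{f}}\circ\star$; the signs are controlled by the Koszul rule for reversing a tuple of graded elements combined with the degree shift $[1-n]$, and here I would appeal to the conventions of \cite{FeGi15} for the bookkeeping rather than reproduce it.

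What is left, and what I expect to be the genuine obstacle, is the cyclic term: $\star$ applied to $d^{\mathrm{c}}\xi=\pm(f_0\circ f_{n-1})\otimes f_1\otimes\cdots\otimes f_{n-2}$ and $d^{\mathrm{c}}(\xi^\star)=\pm(f_{n-1}^\star\circ f_0^\star)\otimes f_{n-2}^\star\otimes\cdots\otimes f_1^\star$ do not agree in the unquotiented complex, being related by moving one factor past the others, so one must invoke the relation $\sim$, which in this situation exchanges a starred leading factor with the reversal-and-star of the remaining factors, to see that they become equal in $C^{\mathrm{inv}}_\bullet(\aaa)$. Modulo $\sim$ the operator $\star$ is in fact, up to sign, the cyclic rotation by one factor, so this step is exactly the statement that the Hochschild-type differential is compatible with cyclic rotation after passing to the involutive quotient. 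The care needed is to keep the signs coming from $[1-n]$, from the reversal, and from $\sim$ consistent with those already matched for $d^{\mathrm{int}}$ and $d^{\mathrm{f}}$; once that is done, assembling the three contributions gives $d(\xi^\star)=(d\xi)^\star$, and the same computation read modulo $\sim$ shows $d$ is well defined on the quotient, which together are the assertion of the lemma.
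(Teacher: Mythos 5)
Your handling of the internal and inner-face terms is essentially the paper's own (very terse) ``direct computation'': apply $\star$ to each summand of $d(f_0\otimes\dots\otimes f_{n-1})$, use $d(f^\star)=(df)^\star$ and $(f_{i+1}\circ f_i)^\star=f_i^\star\circ f_{i+1}^\star$, and reindex. The genuine gap is exactly at the step you flag as the obstacle, and your proposed fix does not close it. The relation $\sim$ identifies $h_0\otimes h_1\otimes\dots\otimes h_{n-1}$ with $h_0^\star\otimes h_{n-1}^\star\otimes\dots\otimes h_1^\star$, i.e.\ it stars the leading factor and reverse-stars the tail \emph{while keeping the leading factor in the leading slot}; it never identifies a chain with a cyclic rotation of itself. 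Concretely, for $n=3$ the star of the cyclic term of $d(f_0\otimes f_1\otimes f_2)$ is $f_1^\star\otimes(f_2^\star\circ f_0^\star)$, while the cyclic term of $d\bigl(f_2^\star\otimes f_1^\star\otimes f_0^\star\bigr)$ is $(f_2^\star\circ f_0^\star)\otimes f_1^\star$; applying $\sim$ to these two elements gives $f_1\otimes(f_0\circ f_2)$ and $(f_0\circ f_2)\otimes f_1$ respectively, which are still a transposition apart and are not identified in $C^{\text{inv}}_\bullet(\aaa)$ (for a commutative algebra with trivial involution, $\sim$ is vacuous on two-factor chains, so they are genuinely distinct there). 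Your fallback remark --- that modulo $\sim$ the involution \emph{is} the cyclic rotation, so what is needed is compatibility of the differential with rotation on the quotient --- is not an argument but a restatement of the lemma itself, and it is precisely the kind of statement one cannot take for granted: the failure of the Hochschild differential to commute with the cyclic operator is the starting point of cyclic homology, so ``compatibility with rotation'' must be proved, not invoked. Note that the paper's proof makes no use of $\sim$ at all; it matches all terms, including the cyclic one, on the nose.

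A secondary issue is that the sign bookkeeping you defer to \cite{FeGi15} is not routine. Under your reindexings $i\mapsto n-1-i$ (internal terms) and $i\mapsto n-2-i$ (inner faces), the signs $(-1)^i$ as written in the paper match only up to factors $(-1)^{n-1}$ and $(-1)^{n}$ respectively, so the two families require different compensating signs; these must be supplied by an explicit Koszul/reversal sign built into the definition of $\star$ on tensors, and until that convention is fixed the assembled identity $d(\xi^\star)=(d\xi)^\star$ is not even stated termwise. As written, then, the proposal does not constitute a proof: the matched families are matched only up to unresolved signs, and the cyclic term remains unaccounted for.
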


\begin{proof}
It is a direct computation:
\begin{align*}
d(f_0\otimes\dots\otimes f_{n-1})^\star & = \sum_{i=0}^{n-1}(-1)^i(f_0\otimes\dots\otimes df_i\otimes\dots\otimes f_{n-1})^\star \\
 & +  \sum_{i=0}^{n-2}(-1)^i(f_0\otimes\dots\otimes (f_{i+1}\circ f_i)\otimes\dots\otimes f_{n-1})^\star \\
 & +  (-1)^{n-1} ((f_0\circ f_{n-1})\otimes\dots\otimes f_{n-2})^\star \\
 & =  d(f^\star_{n-1}\otimes \dots \otimes f^\star_0). \qedhere 
\end{align*}
\end{proof}

If $\aaa$ is unital, the \index{Normalized involutive Hochschild chain complex} \textit{normalized involutive Hochschild chain complex} $\overline{C}^{\text{inv}}_\bullet(\aaa)$ is the quotient of $C^{\text{inv}}_n(\aaa)$ by the sub-complex spanned by $f_0\otimes\dots\otimes f_{n-1}$, where at least one of the maps $f_i$ (for $i>0$) is the identity. We have the following result:

\begin{lemma}[cf. Lemma 7.4.1 \cite{Costello07}]
The functor $\aaa\to\overline{C}^{\text{inv}}_\bullet(\aaa)$, from the category of involutive DG categories with set of objects $\Lambda$ to the category of complexes, is exact.
\end{lemma}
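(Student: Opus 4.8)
The plan is to follow the strategy of Costello's Lemma~7.4.1~\cite{Costello07}, inserting one extra step to account for the involutive relation $\sim$. Recall that, in the terminology of this paper, the assertion means that the functor sends a quasi-isomorphism $\phi\colon\aaa\to\bbb$ of (unital) involutive DG categories with object set $\Lambda$ --- i.e.\ $\phi$ is the identity on objects, commutes with the involutions, and induces quasi-isomorphisms $\homo_{\aaa}(a,b)\to\homo_{\bbb}(a,b)$ for all $a,b$ --- to a quasi-isomorphism $\overline{C}^{\text{inv}}_\bullet(\aaa)\to\overline{C}^{\text{inv}}_\bullet(\bbb)$. The idea is to filter $\overline{C}^{\text{inv}}_\bullet$ by tensor--word length, identify the associated graded, and there reduce to the K\"unneth theorem together with the exactness of the operation of passing to the quotient by $\sim$ in characteristic zero.

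First I would set $F^p\overline{C}^{\text{inv}}_\bullet(\aaa)$ to be the span of the classes of tensors $f_0\otimes\cdots\otimes f_{k-1}$ with $k\le p$. The internal part of the differential preserves the word length, while the two composition terms decrease it by one, so $d(F^p)\subseteq F^p$; the relation $\sim$ is an identification between tensors with the same number of factors, and by the preceding Lemma the involution --- hence $\sim$ --- is compatible with $d$, so the filtration descends to the quotient complex and is respected by $\phi$. It is exhaustive and bounded below. On the associated graded only the internal differential survives, so $\operatorname{gr}^p\overline{C}^{\text{inv}}_\bullet(\aaa)$ is the quotient by $\sim$ of
\[
\bigoplus_{a_0,\dots,a_{p-1}}\homo_{\aaa}(a_0,a_1)\otimes\overline{\homo}_{\aaa}(a_1,a_2)\otimes\cdots\otimes\overline{\homo}_{\aaa}(a_{p-1},a_0)\,[1-p],
\]
with the tensor--product differential, where $\overline{\homo}_{\aaa}(a,b)=\homo_{\aaa}(a,b)$ for $a\neq b$ and $\overline{\homo}_{\aaa}(a,a)=\homo_{\aaa}(a,a)/\K\cdot 1_a$.

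Next I would show that $\phi$ induces a quasi-isomorphism on each $\operatorname{gr}^p$. Since a DG functor sends $1_a$ to $1_a$, the short exact sequences $0\to\K\cdot 1_a\to\homo_{\aaa}(a,a)\to\overline{\homo}_{\aaa}(a,a)\to 0$ fit into a morphism of short exact sequences whose left and middle maps are quasi-isomorphisms, so by the five-lemma on the long exact homology sequences the maps $\overline{\homo}_{\aaa}(a,a)\to\overline{\homo}_{\bbb}(a,a)$ are quasi-isomorphisms too. As $\K$ is a field the K\"unneth theorem makes the induced map of the displayed direct sums a quasi-isomorphism, and the degree shift $[1-p]$ is irrelevant. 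Finally, the quotient by $\sim$ is the passage to coinvariants of an order-two symmetry (reversal-with-stars), which in characteristic zero is the image of the averaging idempotent, hence a direct summand preserved by chain maps; since $\phi$ commutes with the involutions it is compatible with this splitting, so $\operatorname{gr}^p\overline{C}^{\text{inv}}_\bullet(\phi)$ is a quasi-isomorphism for every $p$.

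To conclude I would run the standard filtered-complex argument: inducting on $p$ and applying the five-lemma to the long exact sequences of the pairs $(F^p,F^{p-1})$ shows $F^p\overline{C}^{\text{inv}}_\bullet(\phi)$ is a quasi-isomorphism for all $p$, and since homology commutes with the filtered colimit $\overline{C}^{\text{inv}}_\bullet(\aaa)=\operatorname{colim}_p F^p$, the map $\overline{C}^{\text{inv}}_\bullet(\phi)$ is a quasi-isomorphism. The main obstacle is the bookkeeping around $\sim$: one must check that it respects the word-length filtration and the internal differential (which is where the preceding Lemma is used) and that quotienting by it is exact, which is exactly the point at which the standing characteristic-zero hypothesis on $\K$ enters; everything else is a routine transcription of \cite{Costello07}.
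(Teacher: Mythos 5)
Your argument is correct and is essentially the proof the paper intends: the paper states this lemma without proof, deferring to Costello's Lemma 7.4.1, and your word-length filtration with K\"unneth on the associated graded is exactly that argument, with the characteristic-zero averaging idempotent correctly supplying the additional exactness needed for the quotient by $\sim$. The only minor points to make explicit are that the operator $f_0\otimes g\mapsto f_0^\star\otimes g^\star$ generating $\sim$ is not the involution of the preceding lemma itself but its composite with a cyclic shift, so its compatibility with $d$ holds only with the signs the paper suppresses, and that this operator preserves both the word length and the degenerate subcomplex, so the normalization and the quotient by $\sim$ commute and your description of $\operatorname{gr}^p$ is legitimate.
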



Given an Calabi-Yau extended involutive $A_\infty$-category $\phi$ there is an underlying extended involutive $A_\infty$-category given by restricting to $\ddd^+_{\Lambda,\text{open}}$, indeed: let us consider $\phi:\ddd_{\Lambda,\text{open}}\to \comp_{\K}$ an Calabi-Yau extended involutive $A_\infty$-category (see Lemma \ref{invcalabi}); if we consider the subcategory $\ddd^+_{\Lambda,\text{open}}\subset \ddd_{\Lambda,\text{open}}$ and take the restriction $\phi_{|\ddd^+_{\Lambda,\text{open}}}$, we get a functor $\ddd^+_{\Lambda,\text{open}}\to\comp_{\K}$ which, by Lemma \ref{invcalabi}, is an Calabi-Yau extended involutive $A_\infty$-category; this is the underlying category we are talking about. The Hochschild homology of $\phi$ is defined to be the homology of the associated underlying $A_\infty$-category.

\begin{proposition}
For a unital Calabi-Yau extended involutive $A_\infty$-category $\phi$ the following equality holds:
$$\ddd_\Lambda(-,1)\otimes_{\ddd_{\Lambda,\text{open}}}\phi=\overline{C}^{\text{inv}}_\bullet(\phi).$$ 
\end{proposition}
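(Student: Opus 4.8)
The plan is to identify the two complexes directly, term by term, by unwinding the left-hand side through the generators-and-relations presentation of $\ddd_\Lambda$ and matching it against the definition of the normalized involutive Hochschild complex.

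First I would use \thmref{annuli_Mess}: as an $\ob(\oc_\Lambda)\mhyphen\ddd_{\Lambda,\text{open}}$-bimodule, $\ddd_\Lambda$ is freely generated by the annuli $A(\lambda_0,\dots,\lambda_{n-1})$, the identities, and the twisted discs, modulo the two relations stated there. Evaluating the $\ob(\oc_\Lambda)$-variable at the object $1$ consisting of a single closed boundary component, the only generators that survive are the annuli (each carries exactly one closed outgoing boundary) and the twisted discs acting on their open marked points, the identity generators merely recording the residual $\ddd_{\Lambda,\text{open}}$-action. Consequently $\ddd_\Lambda(-,1)\otimes_{\ddd_{\Lambda,\text{open}}}\phi$ is, in the degree attached to the annulus with $n$ marked points, spanned by $A(\lambda_0,\dots,\lambda_{n-1})\otimes_\K\phi([\lambda_n]^c)$; by the splitting isomorphism (\ref{isom}) one has $\phi([\lambda_n]^c)\cong\homo_\bbb(\lambda_0,\lambda_1)\otimes\cdots\otimes\homo_\bbb(\lambda_{n-1},\lambda_0)$ up to cyclic relabelling, which is precisely the $n$-th summand entering $C^{\text{inv}}_\bullet(\bbb)$, and the cell decomposition of $\gcal_\Lambda$ from \secref{differential} equips the annulus with $n$ marked points with the degree producing exactly the shift $[1-n]$ in the definition of $C^{\text{inv}}_\bullet$.

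Next I would match the relations. Relation (1) of \thmref{annuli_Mess} --- gluing the one-marked-point disc $D(\lambda_i)$, which under \lemref{invinfinity} is the unit $\K\to\homo_\bbb(\lambda_i,\lambda_i)$, to any open marked point of the annulus other than the one between $\lambda_{n-1}$ and $\lambda_0$ is zero --- becomes, after tensoring with $\phi$, the statement that $f_0\otimes\cdots\otimes f_{n-1}$ vanishes whenever some $f_i$ with $i>0$ is an identity; this is precisely the passage from $C^{\text{inv}}_\bullet$ to the normalized complex $\overline{C}^{\text{inv}}_\bullet$. The twisted disc $D^\tau(\lambda_i,\lambda_{i+1})$ supplies the involution $\star$ on the $\homo_\bbb$'s, and its action on the marked points of an annulus --- combined with the reversal of the cyclic ordering of the annulus, which is part of the M\"obius structure --- implements both the identification $f_0^\star\otimes g = f_0\otimes g^\star$ defining $C^{\text{inv}}_\bullet(\bbb)$ and the global involution $(f_0\otimes\cdots\otimes f_{n-1})^\star = f_{n-1}^\star\otimes\cdots\otimes f_0^\star$. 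At this point the two sides agree as graded vector spaces with involution.

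Finally I would compare differentials, and this is where the main difficulty sits. The differential on annuli from \secref{differential} has two sums: the first, $A(\dots,\lambda_i,\lambda_j,\dots)\ast D(\lambda_i,\dots,\lambda_j)$, inserts after tensoring with $\phi$ the operation $m_{j-i}$ applied to the consecutive arguments $f_i\otimes\cdots\otimes f_{j-1}$, producing the terms $f_0\otimes\cdots\otimes m_{j-i}(f_i\otimes\cdots\otimes f_{j-1})\otimes\cdots$; the second, $A(\lambda_j,\dots,\lambda_i)\ast D(\lambda_i,\dots,0,1,\dots,\lambda_j)$, produces the terms in which the product wraps across the base point, i.e.\ those involving $f_0$ together with $f_{n-1}$. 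Adding the internal differential inherited from $\phi$ taking values in $\comp_\K$, which contributes $\sum_i(-1)^i f_0\otimes\cdots\otimes m_1(f_i)\otimes\cdots$, one recovers in the strictly associative case exactly the three lines of the differential of $\overline{C}^{\text{inv}}_\bullet(\aaa)$ and in general its $A_\infty$-version. The obstacle is entirely a matter of signs: one must check that the orientations chosen for the cells of $\gcal_\Lambda$ on discs and annuli reproduce both the Koszul signs $(-1)^{i+jl}$ of the $A_\infty$-relations and the signs $(-1)^i$ and $(-1)^{n-1}$ of the Hochschild differential, and that the twisted-disc contribution carries the sign making $\star$ a chain map --- the latter having been verified earlier when we checked that $d$ preserves involutions. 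Once the sign conventions are fixed coherently, the equality follows.
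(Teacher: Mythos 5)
Your overall strategy is the paper's: unwind $\ddd_\Lambda(-,1)\otimes_{\ddd_{\Lambda,\text{open}}}\phi$ through the presentation of \thmref{annuli_Mess}, so that in degree $n$ the complex is spanned by $A(\lambda_0,\dots,\lambda_{n-1})\otimes_{\K}\phi([\lambda_n]^c)\cong\homo_{\bbb}(\lambda_0,\lambda_1)\otimes\cdots\otimes\homo_{\bbb}(\lambda_{n-1},\lambda_0)$, obtain the normalization quotient from the relation that gluing the one-marked-point disc to the annulus is zero (played off, through the coequalizer defining the tensor product, against the unit insertion on the module side), and obtain the relation $f_0^\star\otimes g=f_0\otimes g^\star$ together with the involution from the twisted disc. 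Up to this point your argument and the paper's coincide.

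Where you diverge is in the treatment of the differential, and this is the one place your version has a gap. The paper does not compare the annulus differential with an ``$A_\infty$-version'' of the Hochschild differential --- indeed no such complex is defined anywhere in the text: $\overline{C}^{\text{inv}}_\bullet$ is only defined for an involutive DG category, with the strictly binary differential. Instead, the paper first invokes \lemref{Costello631} (exactness of $\phi\rightsquigarrow\ddd^+_\Lambda(-,1)\otimes_{\ddd^+_{\Lambda,\text{open}}}\phi$), together with the exactness of $\aaa\mapsto\overline{C}^{\text{inv}}_\bullet(\aaa)$, to reduce the whole statement to the involutive DG category $\bbb$ associated to $\phi$ via \propref{quasi-equivalence}; only then does the term-by-term identification take place, and the compatibility of the differentials in that strict setting is delegated to Proposition 7.4.3 of \cite{Costello07}. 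Your direct route would either have to define and justify the $A_\infty$ Hochschild differential you allude to (and then it would not literally be the complex $\overline{C}^{\text{inv}}_\bullet$ of the statement), or it needs the same exactness reduction; and your sign discussion, which you explicitly leave open, is exactly the part the paper disposes of by citation rather than re-deriving. Adding the reduction step via \lemref{Costello631} and replacing your sign analysis by the reference to Costello would bring your argument in line with the paper's proof.
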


\begin{proof}
The proof for this result is based on the generators and relations stated in Lemma \ref{KleinMess}, Theorem \ref{theorem_Mess} and Lemma \ref{annuli_Mess}. We can state the following equality, as we have defined the generators of $\ddd^+_\Lambda$ as those of $\ddd_\Lambda$:
$$\ddd_\Lambda(-,1)\otimes_{\ddd_{\Lambda,\text{open}}}\phi=\ddd^+_\Lambda(-,1)\otimes_{\ddd^+_{\Lambda,\text{open}}}\phi.$$
From Lemma \ref{Costello631} we know that the functor $\phi \rightsquigarrow \ddd^+_\Lambda(-,1)\otimes_{\ddd^+_{\Lambda,\text{open}}}\phi$ is exact, so we only have to check that the equality below holds for the DG category $\bbb$ associated to $\phi$, thought of as a left $\ddd^+_{\Lambda,\text{open}}$-module:
$$\ddd^+_\Lambda(-,1)\otimes_{\ddd^+_{\Lambda,\text{open}}}\bbb=\overline{C}^{\text{inv}}_\bullet(\bbb).$$

\begin{remark}
The association between $\phi:\ddd^+_{\Lambda,\text{open}}\to \comp_{\K}$ and the involutive DG category $\bbb$ follows from the quasi-equivalence stated in Proposition \ref{quasi-equivalence}.
\end{remark}

We proceed as in Lemma \ref{Costello631}: in degree $n$, 
the associated complex to the $\ob(\oc_\Lambda)$-module $\ddd^+_\Lambda(-,1)\otimes_{\ddd^+_{\Lambda,\text{open}}}\bbb(-)$ is spanned by $A(\lambda_0,\dots, \lambda_{n-1})\otimes_{\K}\bbb([\lambda_n]^c)$, which is associated, through the disc $D(\lambda_0, \dots, \lambda_{n-1})$, to the product 
$$\homo_{\bbb}(\lambda_0,\lambda_1)\otimes\dots\otimes\homo_{\bbb}(\lambda_{n-1},\lambda_0),$$ 
modulo the subspace spanned by the elements of the form $\phi_0\otimes \dots \otimes \phi_{n-1}$, where at least one of the $\phi_i$ (for $i>0$) is the identity. This quotient comes from the construction of the tensor product $\ddd^+_\Lambda(-,1)\otimes_{\ddd^+_{\Lambda,\text{open}}}\bbb$, indeed: let us recall that the tensor product is characterized by the following commutative diagram:
\[
\xymatrix{
\ddd^+_\Lambda(m,1)\otimes_{\K} \ddd^+_{\Lambda,\text{open}}(n,m)\otimes_{\K} \bbb(n) \ar[r]^-{(1)} \ar[d]_-{(2)}& \ddd^+_\Lambda(m,1)\otimes_{\K} \bbb(m) \ar[d]\\
\ddd^+_\Lambda(n,1)\otimes_{\K} \bbb(n) \ar[r] & \ddd^+_\Lambda(-,1)\otimes_{\ddd^+_{\Lambda,\text{open}}}\bbb(-)
}
\]
\begin{remark}
Mind the abuse of notation: we write $\bbb(m)$ instead of $\bbb([\lambda_m]^c)$.
\end{remark}

Action (1) corresponds to gluing the surface in $\ddd^+_{\Lambda,\text{open}}(n,m)$ to a disc depicting $\bbb(n)$ whilst action (2) corresponds to gluing the same surface in $\ddd^+_{\Lambda,\text{open}}(n,m)$ to an annulus representing $\ddd^+_\Lambda(m,1)$. Algebraically, action (1) corresponds to the map:
\begin{multline*}
\homo_{\bbb}(\lambda_0,\lambda_{1})\otimes \dots \otimes \homo_{\bbb}(\lambda_{m-1},\lambda_0) \to \\ 
\homo_{\bbb}(\lambda_0,\lambda_1)\otimes \dots \otimes \homo_{\bbb}(\lambda_i, \lambda_i)\otimes \dots \otimes \homo_{\bbb}(\lambda_{m-1},\lambda_0)
\end{multline*}

defined by: $f_0\otimes \dots \otimes f_{m-1}\mapsto f_0\otimes \dots \otimes \id_{\{\lambda_i,\lambda_i\}} \otimes \dots \otimes f_{m-1}$. On the other hand, action (2) is zero as stated in Theorem \ref{annuli_Mess}. Keeping in mind that the diagram commutes, we get the relation that the tensor product of maps where at least one is the identity (for $i>0$) yields zero, and this is what leads to the quotient space above. The following picture intends to make this reasoning clearer:

\begin{figure}[h]
\centering
\scalebox{1}{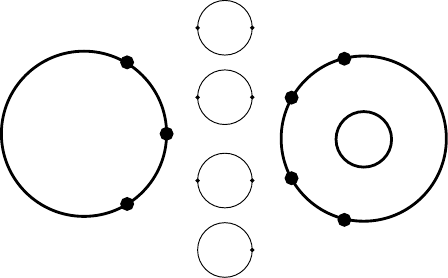}
\caption{\textit{Gluing on the left or on the right must be equivalent}.}
\end{figure}

There is a further relation given by:
\newpage
\begin{figure}[h]
\centering
\scalebox{1}{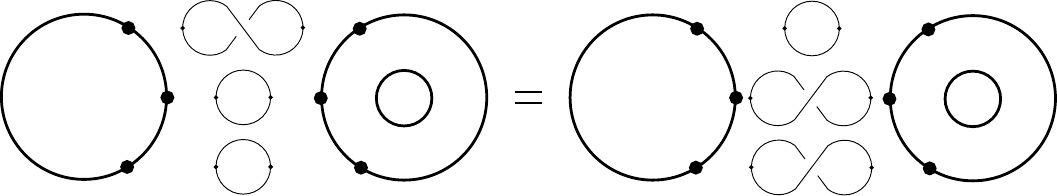}
\end{figure}

This relation corresponds to the following: for $f_0\in\homo_{\bbb}(\lambda_0,\lambda_1), f_1\in\homo_{\bbb}(\lambda_1,\lambda_2)$ and $f_2\in\homo_{\bbb}(\lambda_2,\lambda_0)$ we have: $f_0^\star\otimes f_1\otimes f_2=f_0\otimes f_2^\star\otimes f_1^\star$. 
\\\\
This shows that $\ddd^+_\Lambda(-,1)\otimes_{\ddd^+_{\Lambda,\text{open}}}\bbb$ is isomorphic, as a vector space, to the quotient of 
$$\bigoplus_n\left( \bigoplus_{\lambda_0,\dots, \lambda_{n-1}}\homo_{\bbb}(\lambda_0,\lambda_1)\otimes\dots\otimes\homo_{\bbb}(\lambda_{n-1},\lambda_0)\right),$$
by the relation $f_0^\star\otimes g=f_0\otimes g^\star$, modulo the subspace spanned by the elements of the form $f_0\otimes \dots \otimes f_{n-1}$ above. 
This is precisely the definition given for the normalized involutive Hochschild complex $\overline{C}^{\text{inv}}_\bullet(\bbb)$, ignoring the differential $d$ momentarily. The compatibility with $d$ follows from Proposition 7.4.3 \cite{Costello07}. \qedhere
\end{proof}



\bibliographystyle{amsalpha}
\bibliography{/Users/Ramses/Documents/Universitat/10_Paper_AGT/Bibliografia}

\end{document}